\numberwithin{equation}{section}
\DeclareFontFamily{U}{cal}{}
\DeclareFontShape{U}{cal}{m}{n}{<->cmsy10}{}
\DeclareSymbolFont{rcal}{U}{cal}{m}{n}
\DeclareSymbolFontAlphabet{\mathcal}{rcal}
\newtheorem{Def}{Definition}[section]
\newtheorem{Bsp}[Def]{Example}
\newtheorem{Prop}[Def]{Proposition}
\newtheorem{Theo}[Def]{Theorem}
\newtheorem{Lem}[Def]{Lemma}
\newtheorem{Koro}[Def]{Corollary}
\theoremstyle{definition}
\newtheorem{Rem}[Def]{Remark}
\newcommand{\bsm}{\begin{smallmatrix}}
\newcommand{\esm}{\end{smallmatrix}}
\newcommand{\add}{{\rm add}}
\newcommand{\gd}{{\rm gl.dim }}
\newcommand{\End}{{\rm End}}
\def\gldim{\mathop{\rm gl.dim}\nolimits}
\def\ed{\mathop{\rm ext.dim}\nolimits}
\newcommand{\DD}{{\rm D}}
\newcommand{\rad}{{\rm rad}}
\newcommand{\pd}{{\rm pd}}
\newcommand{\id}{{\rm id}}
\newcommand{\DTr}{{\rm DTr}}
\newcommand{\T}{{\mathcal T}}
\newcommand{\cpx}[1]{#1^{\bullet}}
\newcommand{\D}[1]{{\mathscr D}(#1)}
\newcommand{\Db}[1]{{\mathscr D}^b(#1)}
\newcommand{\C}[1]{{\mathscr C}(#1)}
\newcommand{\K}[1]{{\mathscr K}(#1)}
\newcommand{\Kb}[1]{{\mathscr K}^b(#1)}
\newcommand{\modcat}{\ensuremath{\mbox{{\rm -mod}}}}
\newcommand{\stmodcat}[1]{#1\text{{\rm -{\underline{mod}}}}}
\newcommand{\pmodcat}[1]{#1\mbox{{\rm -proj}}}
\newcommand{\imodcat}[1]{#1\mbox{{\rm -inj}}}
\newcommand{\opp}{^{\rm op}}
\newcommand{\Hom}{{\rm Hom}}
\newcommand{\Ext}{{\rm Ext}}
\newcommand{\lra}{\longrightarrow}
\newcommand{\lraf}[1]{\stackrel{#1}{\lra}}
\newcommand{\ra}{\rightarrow}
\title{ \bf  The Extension dimension of syzygy module categories
\footnotetext{
2020 Mathematics Subject Classification:
Primary 16G10, 16E10, 16E35; Secondary 18E10, 18G65, 18G80.}\\
\footnotetext{
Keywords: Extension dimension; Derived equivalence; Stable equivalence; Separable equivalence; Syzygy.}
\footnotetext{Email addresses:  zhengjunling@cjlu.edu.cn, tll878684581@163.com,shuqy@sicnu.edu.cn}
}
\author {Junling  Zheng$^a$, Lulu Tian$^a$, Qianyu Shu$^b$\thanks{Corresponding author} \\
{\it \scriptsize  $^a$ Department of Mathematics, China Jiliang University, Hangzhou, 310018, Zhejiang Province, P. R. China
}\\
{\it \scriptsize  $^b$ School of Mathematical Sciences, Sichuan Normal University, Chengdu, 610000, Sichuan Province, P. R. China
}
}
\date{}
\begin{document}

\maketitle
\begin{abstract}
In this paper, our primary focus is on investigating the extension dimensions of syzygy module categories associated with Artin algebras, particularly under various equivalences. We demonstrate that, for sufficiently large $i$, the $i$-th syzygy module categories of derived equivalent algebras exhibit identical extension dimensions. Furthermore, we establish that the extension dimension of the $i$-th syzygy module category is an invariant under both stable equivalence and separable equivalence for each nonnegative integer $i$.
\end{abstract}

\section{Introduction}

Inspired by the idea of Bondal and van den Bergh in \cite{bb03}, Rouquier introduced the dimension of a triangulated category in \cite{r06,r08}.
This dimension is a measure of the complexity of this category and plays an important role in the representation theory of Artin algebras (see \cite{han2009derived, krause2006rouquier, Ma2024,opp09, r06}). Notably, it can be used to compute the representation dimension of Artin algebras (see \cite{opp09,r06}). Analogous to the dimension of triangulated categories, the extension dimension  of an abelian category was introduced by Beligiannis in \cite{b08}. For an Artin algebra $A$, let $\ed(A)$ denote the extension dimension of the category of all finitely generated left $A$-modules. Beligiannis (\cite{b08}) established that $\ed(A)=0$ if and only if $A$ is representation-finite. This means that the extension dimension of an Artin algebra provides a reasonable way of measuring how far an algebra is from being representation-finite. Recently, many upper bounds have been found for the extension dimension of a given Artin algebra (see \cite{b08,zh22, zheng2020}). However, it would be very hard to give the precise value of dimension of an Artin algebra. The aim of this paper is to provide new information on this problem.

Let $A$ be an Artin algebra and $A\modcat$ be the category of finitely generated left $A$-modules. An $A$-module $K$ is called an $n$\textit{-th syzygy module} ($n>0$) if there is an exact sequence of $A$-modules: $0\ra K\ra P^0\ra P^1\ra \cdots \ra P^{n-1}\ra M\ra 0$ for some $A$-module $M$ with $P^i$ projective; $K$ is called an $\infty$\textit{-th syzygy module} if there is an exact sequence of $A$-modules: $0\ra K\ra P^0\ra P^1\ra P^2 \cdots $ with $P^i$ projective. Denoted by $\Omega^{n}(A\modcat)$
the full subcategory of $A\modcat$
consisting of all $n$-th syzygies $A$-modules and by $\Omega^{\infty}(A\modcat)$
the full subcategory of $A\modcat$
consisting of all $\infty$-th syzygies $A$-modules. For convenience, set $\Omega^{0}(A\modcat):=A\modcat$.
The extended closure properties of the category $\Omega^{n}(A\modcat)$ have been extensively studied, see \cite{ar1994,ar1996,Ber-Ma2023,Shi-Ryo-3016,huang2006}.
Recently, the extension dimension (also known as radius in \cite{dao2014radius})
of the category $\Omega^{n}(A\modcat)$ is also studied (see \cite{dao2014radius,Ma2024,zheng2020}).
In this article, we will continue to study the extension dimensions of syzygy module categories associated with Artin algebras. Specifically, we mainly study the behavior of the extensions dimensions of syzygy module categories under different equivalences.

Derived categories and derived equivalences, introduced by Grothendick and Verdier (\cite{v}), have now connections with various mathematical domains, including algebraic geometry, representation theory of Artin algebras and finite groups (see \cite{h88,huy06,Xi18}), while the Morita theory of derived categories of rings by Rickard (\cite{r1989}) and the Morita theory of derived categories of differential graded algebras by Keller (\cite{k94}) provide a powerful tool to understand homological properties of these equivalent algebras. Notably, the differences of global dimensions, finitistic dimensions and extension dimensions of two derived equivalent algebras are bounded above the length of a tilting complex inducing a derived equivalence (see \cite[Section 12.5(b)]{gr92},\cite{h93,px09,zhang24}). Moreover, numerous homological invariants of derived equivalences have been discovered, including Hochschild homology (\cite{r91}), cyclic homology (\cite{k98}), algebraic $K$-groups (\cite{ds04}) and the number of non-isomorphic simple modules (\cite{r1989}). In this paper, we establish that the extension dimension of the $i$-th syzygy module category is an invariant of derived equivalence for sufficiently large $i$ or when $i=\infty$.

\begin{Theo}\label{main-der-thm}
{\rm (Theorem \ref{der-thm})}
Let $F:\Db{A}\lraf{\sim} \Db{B}$ be a derived equivalence between Artin algebras. Then
$$\ed \Omega^{i}(A\modcat)=\ed\Omega^{i}(B\modcat)$$
for $i$ sufficiently large and
$$\ed \Omega^{\infty}(A\modcat)=\ed\Omega^{\infty}(B\modcat).$$
\end{Theo}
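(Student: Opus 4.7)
My plan is to reduce the derived-equivalence statement to the stable-equivalence invariance of $\ed\,\Omega^i(-\modcat)$ that is the other main result announced in the paper's abstract. A derived equivalence is generally far from a stable equivalence on the level of module categories; however, once we pass to $i$-th syzygies for $i$ past a threshold depending only on the tilting complex that realises the equivalence, the derived equivalence does descend to a stable equivalence between the syzygy subcategories, and the theorem then follows at once.

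First I would invoke Rickard's Morita theorem to replace $F$ by an equivalence induced by a two-sided tilting complex $\cpx{T}$, which we may take concentrated in degrees $[-n,0]$ for some $n\ge 0$. Next, for $i\ge n$ and $M\in\Omega^i(A\modcat)$, write $M=\Omega^i N$. Then $F(N)\in\Db{B}$ has cohomology supported in $[-n,0]$, so an $i$-th syzygy of a suitable (soft) truncation of $F(N)$ yields an object in $\Omega^i(B\modcat)$, uniquely determined up to projective summands. A symmetric construction using $F^{-1}$ provides a candidate inverse, and one checks that these assignments are functorial on the stable categories modulo projectives, producing a stable equivalence between the full subcategories $\Omega^i(A\modcat)$ and $\Omega^i(B\modcat)$. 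For the $i=\infty$ case every object has syzygies of all orders, so the same recipe applies without any cohomological bound and produces a stable equivalence $\Omega^{\infty}(A\modcat)\simeq\Omega^{\infty}(B\modcat)$. Combining these stable equivalences with the invariance of $\ed\,\Omega^i$ under stable equivalence then yields the required equalities.

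The main obstacle lies in the second step: making the syzygy-of-truncation assignment canonical modulo projective summands, verifying its functoriality on the stable level, and pinning down a precise lower bound on $i$ (likely expressible in terms of the length of $\cpx{T}$ together with some finitistic-dimension data) above which everything goes through uniformly. Once this technical stable equivalence between syzygy subcategories is in hand, the initial reduction to a tilting complex via Rickard's theorem and the final appeal to stable-equivalence invariance are comparatively formal; in particular the $i=\infty$ statement then drops out of the same construction without the threshold issue.
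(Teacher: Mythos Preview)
Your reduction to Theorem~\ref{main-st-thm} does not work. That theorem requires the \emph{algebras} $A$ and $B$ to be stably equivalent, i.e.\ $\stmodcat{A}\simeq\stmodcat{B}$ as additive categories; its proof (Section~4) relies on this global equivalence together with the Auslander--Reiten translate and the combinatorics of nodes. What your construction would produce, at best, is an equivalence between the \emph{subcategories} $\Omega^i(A\modcat)$ and $\Omega^i(B\modcat)$ modulo projectives. This is not the hypothesis of Theorem~\ref{main-st-thm}, and you cannot promote it to one: derived equivalent Artin algebras are generally not stably equivalent (the example after Theorem~\ref{der-thm} exhibits a representation-finite $A$ derived equivalent to a representation-infinite hereditary $B$). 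Nor does an additive equivalence of subcategories by itself control extension dimension: in Definition~\ref{def-subcat-extension-dim} the generator $T$ ranges over all of $A\modcat$, and the filtration $[T]_{n+1}$ is built from short exact sequences in $A\modcat$, so a functor defined only on the syzygy subcategory has nothing to act on.

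The paper argues directly, without invoking Theorem~\ref{main-st-thm}. The tool is the Hu--Xi stable functor $\overline{F}:\stmodcat{A}\to\stmodcat{B}$ attached to a nonnegative derived equivalence (Lemma~\ref{lem-non}). This is essentially the ``syzygy-of-truncation'' functor you sketch, but it is \emph{not} an equivalence (the composite $\overline{G[-p]}\circ\overline{F}$ is $\Omega_A^p$, not the identity); what matters is that it carries short exact sequences to short exact sequences up to projective summands and commutes with syzygies. These two properties suffice to transport a containment $\Omega^{m_2}(B\modcat)\subseteq[T]_{n_2+1}$ back along $\overline{G[-p]}$, giving $\Omega^{m_1+p+m_2}(A\modcat)\subseteq[\overline{G[-p]}(T)\oplus A]_{n_2+1}$. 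Since $\ed\,\Omega^i$ eventually stabilises (Proposition~\ref{syzygy-stablity}), the extra shift by $p$ is harmless and the inequality becomes an equality. The case $i=\infty$ is handled by first checking that $\overline{F}$ maps $\Omega^\infty(A\modcat)$ into $\Omega^\infty(B\modcat)$ and then running the same exactness argument. In short, the functor you are groping towards exists in the literature, but it should be used for its exactness on all of $\stmodcat{A}$, not as a bridge to Theorem~\ref{main-st-thm}.
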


An important problem in representation theory of Artin algebras and groups, is the study of the properties that are invariant under stable equivalence. For example, Mart\'inez-Villa  (\cite{MV1990}) established that stable equivalences preserve the global and dominant dimensions of algebras without nodes. In \cite{Krause00}, Krause and Zwara proved that the representation type is an invariant of stable equivalence. In 2005, Guo (\cite{Guo05}) showed that stable equivalences preserve the representation dimensions of Artin algebras (initially proven by Xi in \cite{Xi02} for stable equivalence of Morita type). In 2022, Xi and Zhang  (\cite{Xi2022}) illustrated that the delooping levels, $\phi$-dimensions and $\psi$-dimensions of Artin algebras are invariants of stable equivalences of algebras without nodes. Recently, Zhang ang Zheng (\cite{zhang24}) verified that the extension dimension of Artin algebras is an invariant of stable equivalence. We will generalize the work of Zhang and Zheng and get the following one of the main results.

\begin{Theo}\label{main-st-thm}
{\rm (Theorem \ref{st-thm} and Theorem \ref{st-thm-node})}
Let $A$ and $B$ be Artin algebras. Suppose that they are stably equivalent. Then $$\ed \Omega^{i}(A\modcat)=\ed \Omega^{i}(B\modcat)$$ for each $i \in \mathbb{N}.$ If, in addition, $A$ and $B$ have no nodes, then $$\ed \Omega^{\infty}(A\modcat)=\ed \Omega^{\infty}(B\modcat).$$
\end{Theo}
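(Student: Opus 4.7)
The plan is to generalize the argument of Zhang and Zheng \cite{zhang24}, who proved the statement for $i=0$, to arbitrary syzygy subcategories. Let $F\colon\stmodcat{A}\xrightarrow{\sim}\stmodcat{B}$ be a stable equivalence. The strategy has two layers: first, establish that $F$ respects the class of $i$-th syzygies; second, transfer across $F$ a generator that realises the extension dimension.

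\textbf{First step (syzygy compatibility).} For each iso-class of indecomposable non-projective $A$-modules I pick its unique projective-free representative, obtaining a module-level bijection $\widehat F$ that lifts $F$. I claim that for every $i\in\mathbb N$, and for $i=\infty$ provided both $A$ and $B$ have no nodes, $\widehat F$ restricts to a bijection between indecomposable non-projective $i$-th syzygies of $A$ and of $B$. For finite $i$ I would proceed by induction, using that an indecomposable non-projective $M$ belongs to $\Omega^i(A\modcat)$ iff it lies, up to a projective summand, in the essential image of $\Omega_A^i$ on the stable category. The inductive step reduces to the case $i=1$, which in turn follows from the fact that any stable equivalence commutes with $\Omega$ up to a controllable node-defect. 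For $i=\infty$, the no-nodes hypothesis affords the classical theorem of Auslander-Reiten that $F\circ\Omega_A\cong\Omega_B\circ F$ strictly on the stable category, whence the correspondence of $\infty$-th syzygies is immediate.

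\textbf{Second step (generator transfer).} Suppose $\ed\Omega^i(A\modcat)=n$, realised by some $A$-module $T\in\Omega^i(A\modcat)$ such that every object of $\Omega^i(A\modcat)$ is a direct summand of an $(n{+}1)$-fold extension of modules in $\add T$. Decompose $T$ into a projective-free part $T_1$ and a projective part, and set
\[
T':=\widehat F(T_1)\oplus P_B\oplus C,
\]
where $P_B$ is a progenerator of $B\modcat$ and $C$ is a finite-dimensional correction term absorbing the node-defects identified in Step 1. By the first step, $T'\in\Omega^i(B\modcat)$. To obtain $\ed\Omega^i(B\modcat)\le n$, I take any $M\in\Omega^i(B\modcat)$, lift its indecomposable non-projective summands through $\widehat F^{-1}$ to a module in $\Omega^i(A\modcat)$, apply the hypothesis on $T$ to get an $(n{+}1)$-fold filtration from $\add T$, and push this filtration through $F$ using that short exact sequences in $A\modcat$ with all terms in $\Omega^i(A\modcat)$ correspond, modulo summands in $\add(P_B\oplus C)$, to short exact sequences in $B\modcat$ with all terms in $\Omega^i(B\modcat)$. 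The reverse inequality follows by symmetry, giving the desired equality.

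\textbf{Main obstacle.} All of the substance lies in Step 1, in particular in controlling the node-defects under a general stable equivalence. Nodes are precisely what obstructs $F$ from commuting cleanly with $\Omega$, so a priori $\widehat F$ might map an $i$-th syzygy to a module that fails to be an $i$-th syzygy by a simple summand. For each fixed finite $i$, the potential discrepancy involves only finitely many exceptional simples and can be absorbed into the correction term $C$; for $i=\infty$, the discrepancies compound under iterated application of $\Omega$, which is why the no-nodes hypothesis becomes indispensable there. Once Step 1 is in place, Step 2 amounts to careful bookkeeping running closely parallel to \cite{zhang24}, with the additional care that extensions must be taken inside $\Omega^i(\cdot\modcat)$ rather than the full module category.
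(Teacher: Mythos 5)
The overall architecture of your proposal (transfer a generator of $[T]_{m+1}$ through the stable functor, using an exactness property of $F$ up to projective and node corrections) matches the paper's, but your Step 1 contains a genuine gap on which the rest of the argument leans. You assert that the induced map $\widehat F$ restricts to a bijection between indecomposable non-projective $i$-th syzygies of $A$ and of $B$, reducing this to the claim that a stable equivalence ``commutes with $\Omega$ up to a controllable node-defect.'' The precise form of that commutation (the paper's Lemma \ref{chen-rigid}, from \cite{c21}) is
$$F(\Omega_{A}^{n}(X))\oplus \bigoplus_{j=1}^{n}\Omega_{B}^{n-j}(F'(\Omega_{A}^{j}(X)_{\triangle}))
  \cong \Omega_{B}^{n}(F(X))\oplus  \bigoplus_{j=1}^{n}\Omega_{B}^{n-j}(F(\Omega_{A}^{j}(X)_{\triangle})),$$
and the correction terms are $(n-j)$-th syzygies of images of $\triangle$-components (nodes and non-injective projectives), not projective modules. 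Consequently $F(\Omega_A^n(X))$ need not be an $n$-th syzygy over $B$ at all; only the displayed direct-sum identity holds. Your claimed bijection on indecomposable $i$-th syzygies is therefore both unproven and stronger than what the commutation formula delivers, and the proposed induction ``reducing to $i=1$'' does not address how the defects accumulate across iterations (they appear as syzygies of all intermediate orders $n-j$). The paper avoids this entirely: it never shows that $F$ preserves $i$-th syzygies for finite $i$. Instead, every $n$-th syzygy over $B$ is written as $\Omega_B^n(F(X))$ for a projective-free $X$ via the bijection $F\colon A\modcat_{\mathscr{P}}\to B\modcat_{\mathscr{P}}$, the filtration witnessing $\Omega_A^n(X)\in[T]_{m+1}$ is pushed through $F$ (Lemma \ref{exact1}) to get $F(\Omega_A^n(X))\in[F(T)\oplus F'(W)]_{m+1}$, and the displayed isomorphism then exhibits $\Omega_B^n(M)$ as a direct summand of a module in $[F(T)\oplus\bigoplus_{j}\Omega_B^{n-j}(F'(W))\oplus B]_{m+1}$.

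Two smaller points. First, you require the transferred generator $T'$ to lie in $\Omega^i(B\modcat)$; this is unnecessary, since in Definition \ref{def-subcat-extension-dim} the generator may be any module of $B\modcat$. Second, in Step 2 you invoke exactness of $F$ only for short exact sequences ``with all terms in $\Omega^i(A\modcat)$,'' but the intermediate terms of the filtration witnessing $\Omega_A^i(X)\in[T]_{m+1}$ are arbitrary modules, so the transfer lemma must (and in the paper does) apply to arbitrary short exact sequences. Your treatment of the $i=\infty$ case without nodes is essentially the paper's (Lemma \ref{Lem-node-infinite-tran} followed by the same filtration transfer).
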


Separable equivalences, introduced independently by Kadison (\cite{Kadison91,Kadison95}) and Linckelmann (\cite{Linckelmann11}), provide a powerful tool in the representation theory of Artin algebras and finite groups. For instance, Linckelmann (\cite{Linckelmann11}) demonstrated that certain Hecke algebras possess finitely generated cohomology algebras, and Bergh and Erdmann (\cite{be11}) established a lower bound for the reprsentation dimension of all the classical Hecke algebras of types $A$, $B$ and $D$. Analogous to derived equivalences and stable equivalences, numerous  homological invariants of separable equivalences have been discovered. These include global dimension (\cite{Kadison95}), complexity (\cite{Pea2017}), representation type (\cite{Linckelmann11,Pea2017}) and extension dimension (\cite{zheng2020}). In this paper, we extend the result in \cite{zheng2020}, yielding one of our main results.

\begin{Theo}\label{main-sep-thm}
{\rm (Theorem \ref{sep-thm})}
Let $A$ and $B$ be Artin algebras. Suppose that they are separably equivalent. Then $$\ed \Omega^{i}(A\modcat)=\ed \Omega^{i}(B\modcat)$$ for each $i \in \mathbb{N}\cup \{\infty\}.$
\end{Theo}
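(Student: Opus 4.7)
The plan is to lift the argument used for the case $i=0$ in \cite{zheng2020} (where the extension dimension of the whole module category is shown to be a separable invariant) to arbitrary $i\in \mathbb{N}\cup\{\infty\}$, by checking that the exchange functors of a separable equivalence restrict to the syzygy subcategories. Recall that a separable equivalence between $A$ and $B$ is given by bimodules ${}_AM_B$ and ${}_BN_A$, each finitely generated and projective on both sides, together with split bimodule monomorphisms ${}_AA_A\hookrightarrow {}_A(M\otimes_B N)_A$ and ${}_BB_B\hookrightarrow {}_B(N\otimes_A M)_B$. Set $F:=N\otimes_A-\colon A\modcat\lra B\modcat$ and $G:=M\otimes_B-\colon B\modcat\lra A\modcat$. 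The two-sided projectivity of $M$ and $N$ makes $F$ and $G$ exact and projective-preserving, while the splittings give that $X$ is a direct summand of $GF(X)$ for every $X\in A\modcat$ and that $Y$ is a direct summand of $FG(Y)$ for every $Y\in B\modcat$.

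The first step is to observe that $F$ and $G$ restrict to functors between $\Omega^i(A\modcat)$ and $\Omega^i(B\modcat)$ for every $i\in \mathbb{N}\cup\{\infty\}$. If $K\in \Omega^i(B\modcat)$ sits in an exact sequence $0\ra K\ra P^0\ra \cdots\ra P^{i-1}\ra M\ra 0$ (or in an infinite resolution when $i=\infty$) with each $P^j$ projective, then applying the exact functor $G$ yields an exact sequence of the same shape with each $G(P^j)$ projective, so $G(K)\in \Omega^i(A\modcat)$; the symmetric statement holds for $F$.

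The second step is the extension-dimension estimate. Assume $\ed\Omega^i(B\modcat)=n$, witnessed by some $T\in \Omega^i(B\modcat)$ with $\Omega^i(B\modcat)\subseteq [T]_{n+1}$, where $[T]_{n+1}$ denotes the $(n+1)$-fold extension closure in $B\modcat$. Set $T':=G(T)\in \Omega^i(A\modcat)$. For any $X\in \Omega^i(A\modcat)$ we have $F(X)\in \Omega^i(B\modcat)\subseteq [T]_{n+1}$, and applying the exact additive functor $G$ term by term to the iterated extension witnessing this yields $GF(X)\in [G(T)]_{n+1}=[T']_{n+1}$. Since $[T']_{n+1}$ is closed under direct summands and $X$ is a direct summand of $GF(X)$, we conclude $X\in [T']_{n+1}$, hence $\ed\Omega^i(A\modcat)\leq n$. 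Exchanging the roles of $A$ and $B$ gives the reverse inequality and thus the desired equality.

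I do not anticipate a serious obstacle. Unlike the derived and stable cases treated earlier in the paper, the tensor functors in a separable equivalence are genuine exact projective-preserving functors between module categories, so no truncation, shift, or node-avoidance hypothesis is needed, and the case $i=\infty$ is handled on exactly the same footing as finite $i$. The only routine point to verify is that $G$ carries a defining filtration of $F(X)$ in $[T]_{n+1}$ to a filtration of $GF(X)$ in $[G(T)]_{n+1}$, which is immediate from the facts that $G$ preserves short exact sequences, finite direct sums, and direct summands.
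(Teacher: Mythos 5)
Your proposal is correct and follows essentially the same route as the paper's proof of Theorem \ref{sep-thm}: apply the exact, projective-preserving tensor functor in one direction to land in $\Omega^{i}(B\modcat)\subseteq [T]_{n+1}$, pull back with the other tensor functor using Lemma \ref{exact-functor}, and use the bimodule splitting to recover $X$ as a direct summand. The only cosmetic difference is that you require the generator $T$ to lie in $\Omega^{i}(B\modcat)$, which is not needed for the argument.
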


The paper is organized as follows: In Section 2, we
recall some basic notations, definitions and facts on extension dimensions. In Section 3, we compare the extension dimensions of syzygy module categories of derived equivalent algebras and prove Theorem \ref{main-der-thm}.
In Section 4, we show Theorem \ref{main-st-thm} and present an example to illustrate this main result.
Finally, the proof of Theorem \ref{main-sep-thm} is establised in Section 5.

\section{Preliminaries}
In this section, we shall fix some notations, and recall some definitions.

\subsection{Stable equivalences and derived equivalences}
Throughout this paper, let $\mathbb{N}$ be the set of natural numbers $\{0,1,2,\cdots\}$, and $R$ be an arbitrary but fixed commutative Artin ring. Unless stated otherwise, all algebras are Artin $R$-algebras with unit, and all modules are finitely generated unitary modules; all categories will be $R$-categories
and all functors are $R$-functors.

Let $A$ be an Artin algebra. We denote by $A\modcat$ the category of all finitely generated left $A$-modules. All subcategories of $A\modcat$ are full, additive and closed under isomorphisms.
For a class of $A$-modules $\mathcal{X}$, we write $\add(\mathcal{X})$ for the smallest full subcategory of $A\modcat$ containing $\mathcal{X}$ and closed under finite direct sums and direct summands.
When $\mathcal{X}$ consists of only one object $X$, we write $\add(X)$ for $\add(\mathcal{X})$.
In particular, $\add({_A}A)$ is exactly the category of projective $A$-modules and also denoted by $A\pmodcat$. We denote by $\mathscr{P}_A$ and $\mathscr{I}_A$ the set of isomorphism classes of indecomposable projective and injective $A$-modules, respectively.
Let $X$ be an $A$-module.
If $f:P\ra X$ is the projective cover of $X$ with $P$ projective, then the kernel of $f$ is called the \emph{syzygy} of $X$, denoted by $\Omega(X)$ (or $\Omega_{A}(X)$).
Dually, if $g:X\ra I$ is the injective envelope of $X$ with $I$ injective, then the cokernel of $g$ is called the \emph{cosyzygy} of $X$, denoted by $\Omega^{-1}(X)$. Additionally, let $\Omega^0$ be the identity functor in $A\modcat$ and $\Omega^1:=\Omega$. Inductively, for any $n\geqslant 2$, define $\Omega^n(X):=\Omega^1(\Omega^{n-1}(X))$ and $\Omega^{-n}(X):=\Omega^{-1}(\Omega^{-n+1}(X))$.
We denoted by $\pd(_AX)$ and $\id(_AX)$ the projective and injective dimension, respectively.

Let $A^{\rm {op}}$ be the opposite algebra of $A$, and $\DD:=\Hom_R(-,E(R/\rad(R)))$ the usual duality from $A\modcat$ to $A^{\rm {op}}\modcat$, where $\rad(R)$ denotes the radical of $R$ and $E(R/\rad(R))$ denotes the injective envelope of $R/\rad(R)$. The duality $\Hom_A(-, A)$ from $\pmodcat{A}$ to $\pmodcat{A\opp}$ is denoted by $^*$, namely for each projective $A$-module $P$, the projective $A\opp$-module $\Hom_A(P, A)$ is written as $P^*$. We write $\nu_A$ for the Nakayama functor $\DD\Hom_A(-,A): \pmodcat{A}\ra \imodcat{A}$.

We denoted by $\stmodcat{A}$ the stable module category of $A$ modulo projective modules. The objects are the same as the objects of $A\modcat$, and the homomorphism set $\underline{\Hom}_A(X,Y)$ between $X$ and $Y$ is given by the quotients of $\Hom_A(X,Y)$  modulo those homomorphisms that factorize through a projective $A$-module. This category is usually called the \emph{stable module category} of $A$. Dually, we denoted by  $A\mbox{-}\overline{\mbox{mod}}$ the stable module category of $A$ modulo injective modules.
Two algebras $A$ and $B$ are said to be \emph{stably equivalent} if the two stable categories $A\stmodcat$ and $B\stmodcat$ are equivalent as additive categories.

\medskip
Let $\cal C$ be an additive category. For two morphisms $f:X\rightarrow Y$ and $g:Y\rightarrow Z$ in $\cal C$, their composition is denoted by $fg$, which is a morphism from $X$ to $Z$. But for two functors $F:\mathcal{C}\ra \mathcal{D}$ and $G:\mathcal{D}\ra\mathcal{E}$ of categories, their composition is written as $GF$.

A complex $\cpx{X}=(X^i, d_X^i)$ over $\mathcal{C}$ is a sequence of objects $X^i$ in
$\cal C$ with morphisms $d_{\cpx{X}}^{i}:X^i\ra X^{i+1}$ such that $d_{\cpx{X}}^{i}d_{\cpx{X}}^{i+1}=0$ for all $i \in {\mathbb Z}$. We denote by $\C{C}$  the category of complexes over $\cal C$ , and by $\K{\mathcal C}$ the homotopy category of complexes over $\mathcal{C}$. If $\cal C$ is an abelian category, then we denote by $\D{\cal C}$ the derived category of complexes over $\cal C$. Let $\Kb{\mathcal C}$
be the full subcategory of $\K{\cal C}$ consisting of
bounded complexes over $\mathcal{C}$.
A complex $X^{\bullet}$ over $\mathcal{C}$ is \emph{cohomologically bounded} if all but finitely many cohomologies of $X^{\bullet}$ are zero. Let $\Db{\cal C}$ be the full subcategory of $\D{\cal C}$ consisting of cohomologically bounded complexes over $\mathcal{C}$. For a given algebra $A$, we simply write $\C{A}$, $\K{A}$ and $\D{A}$ for $\C{A\modcat}$, $\K{A\modcat}$ and $\D{A\modcat}$, respectively. Similarly, we write $\Kb{A}$ and $\Db{A}$ for $\Kb{A\modcat}$ and $\Db{A\modcat}$, respectively.
It is known that $\K{A}$, $\D{A}$, $\Kb{A}$ and $\Db{A}$ are triangulated categories. For a complex $\cpx{X}$ in $\K{A}$ or $\D{A}$, the complex $\cpx{X}[1]$ is obtained from $\cpx{X}$ by shifting $\cpx{X}$ to the left by one degree.

Let $A$ be an Artin algebra.
A homomorphism $f: X\ra Y$ of $A$-modules is said to be a \emph{radical homomorphism} if, for any module $Z$ and homomorphisms $h: Z\ra X$ and $g: Y\ra Z$, the composition $hfg$ is not an isomorphism.
For a complex $(X^i, d_{\cpx{X}}^i)$ over $A\modcat$, if all $d_{\cpx{X}}^i$ are radical homomorphisms, then it is called a \emph{radical complex}, which has the following properties.
\begin{Lem}\label{radical}
{\rm (\cite[pp. 112-113]{hx10})}
Let $A$ be an Artin algebra.

$(1)$ Every complex over $A\modcat$ is isomorphic to a radical complex in $\K{A}$.

$(2)$ Two radical complexes $\cpx{X}$ and $\cpx{Y}$ are isomorphic in $\K{A}$ if and only if they are isomorphic in $\C{A}$.
\end{Lem}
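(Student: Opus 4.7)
The plan is standard for radical complexes in Krull--Schmidt categories.

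For part (1), the idea is to exploit the Krull--Schmidt property of $A\modcat$ to strip off non-radical pieces of the differentials one at a time. Given any complex $(X^i, d^i_{\cpx{X}})$ over $A\modcat$, suppose some differential $d^i_{\cpx{X}}$ is not radical. By Krull--Schmidt there exist decompositions $X^i = V^i \oplus U^i$ and $X^{i+1} = U^{i+1} \oplus V^{i+1}$ together with an isomorphism $a: U^i \xrightarrow{\sim} U^{i+1}$ appearing as a matrix block of $d^i_{\cpx{X}}$. A Gaussian-elimination change of basis on $X^i$ and $X^{i+1}$ puts $d^i_{\cpx{X}}$ into block-diagonal form $a \oplus d'^i$. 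The relations $d^{i+1}_{\cpx{X}} d^i_{\cpx{X}} = 0$ and $d^i_{\cpx{X}} d^{i-1}_{\cpx{X}} = 0$ then force the $U^{i+1}$-component of $d^{i+1}_{\cpx{X}}$ and the $U^i$-component of $d^{i-1}_{\cpx{X}}$ to vanish, so the strand
\[
\cdots \to 0 \to U^i \xrightarrow{a} U^{i+1} \to 0 \to \cdots
\]
is a direct summand subcomplex of $\cpx{X}$. Since it is contractible, its complement is isomorphic to $\cpx{X}$ in $\K{A}$ and has strictly fewer identity summands sitting inside $d^i_{\cpx{X}}$. Iterating degree-by-degree (so that the argument applies even to unbounded complexes) produces the desired radical representative.

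For part (2), the converse is trivial. For the forward direction, assume $\cpx{X}$ and $\cpx{Y}$ are radical complexes and $f^{\bullet}: \cpx{X} \to \cpx{Y}$, $g^{\bullet}: \cpx{Y} \to \cpx{X}$ are chain maps realizing the isomorphism in $\K{A}$. Choose homotopies $s^{\bullet}$ and $t^{\bullet}$ with
\[
g^i f^i - \id_{X^i} = s^{i+1} d^i_{\cpx{X}} + d^{i-1}_{\cpx{X}} s^i,
\qquad
f^i g^i - \id_{Y^i} = t^{i+1} d^i_{\cpx{Y}} + d^{i-1}_{\cpx{Y}} t^i.
\]
Since $d^{\bullet}_{\cpx{X}}$ and $d^{\bullet}_{\cpx{Y}}$ consist of radical morphisms in $A\modcat$, and pre- or post-composition with a radical morphism remains radical, both right-hand sides land in the Jacobson radical of $\End_A(X^i)$ and $\End_A(Y^i)$ respectively. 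Hence $g^i f^i$ and $f^i g^i$ differ from the identity by a radical endomorphism and are therefore automorphisms, so each $f^i$ is an isomorphism of $A$-modules. Commuting with the differentials promotes $f^{\bullet}$ to an isomorphism in $\C{A}$.

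The main obstacle is the bookkeeping in part (1): removing the identity block from $d^i_{\cpx{X}}$ interacts with the neighbouring differentials $d^{i-1}_{\cpx{X}}$ and $d^{i+1}_{\cpx{X}}$, and one has to verify, after the change of basis, that the contractible strand is genuinely a direct summand in $\C{A}$ rather than merely up to homotopy. The passage from "$d^i_{\cpx{X}}$ contains an isomorphism summand" to "a global Gaussian elimination works" is the delicate Fitting-style step, and it is the place where the cited reference should be consulted for the full verification.
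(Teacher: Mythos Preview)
The paper does not supply its own proof of this lemma; it simply records the statement and cites \cite[pp.\ 112--113]{hx10}. Your outline is exactly the standard argument one finds in that reference---splitting off contractible two-term summands via the Krull--Schmidt decomposition of each differential for (1), and observing for (2) that the homotopy terms $s^{i+1}d^i_{\cpx{X}}+d^{i-1}_{\cpx{X}}s^i$ lie in $\rad\End_A(X^i)$ so that $g^if^i$ and $f^ig^i$ are automorphisms---and it is correct as sketched. One cosmetic point: the paper's composition convention is that $fg$ denotes ``first $f$, then $g$'', so to match the ambient notation your endomorphism of $X^i$ should be written $f^ig^i$ rather than $g^if^i$.
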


Two algebras $A$ and $B$ are said to be \emph{derived equivalent} if their derived categories $\Db{A}$ and $\Db{B}$ are equivalent as
triangulated categories. In \cite{r1989}, Rickard proved that $A$ and $B$ are derived equivalent if and only if there exists a bounded complex $\cpx{T}$ of finitely generated projective $A$-modules such that $B\cong\End_{\Db{A}}(\cpx{T})$ and

(1) $\Hom_{\Db{A}}(\cpx{T},\cpx{T}[i])=0$ for all $i\ne 0$;

(2) $\Kb{\pmodcat{A}}={\rm thick}(T^{\bullet})$, where ${\rm thick}(T^{\bullet})$ is the smallest triangulated subcategory of $\Kb{\pmodcat{A}}$ containing $T^{\bullet}$ and closed under finite direct sums and direct summands.

{\parindent=0pt} A complex in $\Kb{\pmodcat{A}}$ satisfying the above two conditions is called a \emph{tilting complex} over $A$. It is known that, given a derived equivalence $F:\Db{A}\ra \Db{B}$, there is a unique (up to isomorphism) tilting complex $\cpx{T}$ over $A$ such that $F(\cpx{T})\cong B$ and $F(A)$ is isomorphic in $\Db{B}$ to a tilting complex over $B$.

\begin{Lem}\label{der-lem}{\rm (\cite[Lemma 2.1]{hx10})}
Let $A$ and $B$ be two algebras, and let $F: \Db{A}\ra \Db{B}$ be a derived equivalence with a quasi-inverse $F^{-1}$. Then $F(A)$ is isomorphic in $\Db{B}$ to a complex $\cpx{\bar{T}}\in\Kb{\pmodcat{B}}$  of the form
$$0\lra \bar{T}^0\lra \bar{T}^1\lra\cdots\lra\bar{T}^n\lra 0$$
for some $n\geqslant 0$ if and only if $F^{-1}(B)$ is isomorphic  in $\Db{A}$ to a complex $\cpx{T}\in\Kb{\pmodcat{A}}$ of the form
$$0\lra T^{-n}\lra\cdots\lra T^{-1}\lra T^0\lra 0.$$
\end{Lem}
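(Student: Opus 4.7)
The plan is to convert the shape hypothesis on one tilting complex into a cohomological vanishing on the other via the equivalence $F$, and then to promote that vanishing back to a shape bound using the rigidity of radical complexes supplied by Lemma \ref{radical}. By symmetry it suffices to prove one implication, say that if $\cpx{T}:=F^{-1}(B)$ has a representative in $\Kb{\pmodcat{A}}$ concentrated in degrees $[-n,0]$, then $\cpx{\bar{T}}:=F(A)$ has a representative in $\Kb{\pmodcat{B}}$ concentrated in degrees $[0,n]$. First I would observe that $F(A)\in\Kb{\pmodcat{B}}$: viewing $A$ as a stalk complex in $\Kb{\pmodcat{A}}$, the derived equivalence restricts to an equivalence on perfect complexes $\Kb{\pmodcat{-}}$. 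By Lemma \ref{radical} I may replace $\cpx{\bar{T}}$ by a radical complex with minimal support $[a,b]$; the goal is then to prove $a\geqslant 0$ and $b\leqslant n$.

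The cohomological input comes from two companion computations via $F$. For every $i\in\mathbb{Z}$,
$$H^i(\cpx{\bar{T}})=\Hom_{\Db{B}}(B,\cpx{\bar{T}}[i])=\Hom_{\Db{B}}(F(\cpx{T}),F(A)[i])=\Hom_{\Db{A}}(\cpx{T},A[i])=H^i\bigl(\Hom^\bullet_A(\cpx{T},A)\bigr),$$
where the last equality uses $\cpx{T}\in\Kb{\pmodcat{A}}$, so the naive Hom complex computes the derived $\Hom$. Since $\cpx{T}$ lives in degrees $[-n,0]$, the complex $\Hom^\bullet_A(\cpx{T},A)$ lives in degrees $[0,n]$, forcing $H^i(\cpx{\bar{T}})=0$ for $i\notin[0,n]$. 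Swapping the arguments yields the companion identity
$$H^i\bigl(\Hom^\bullet_B(\cpx{\bar{T}},B)\bigr)=\Hom_{\Db{B}}(\cpx{\bar{T}},B[i])=\Hom_{\Db{A}}(A,\cpx{T}[i])=H^i(\cpx{T}),$$
which vanishes outside $[-n,0]$.

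The final step uses the standard top-term principle for radical complexes: if $\cpx{C}\in\Kb{\pmodcat{B}}$ is radical with support $[\alpha,\beta]$ and $C^\beta\neq 0$, then $d^{\beta-1}$ factors through $\rad(C^\beta)$, so $H^\beta(\cpx{C})\neq 0$. Applied directly to $\cpx{\bar{T}}$, this forces $b\leqslant n$. The lower bound $a\geqslant 0$ is the subtlest point, because the naive analogue at the bottom of a radical complex fails (an injective radical differential leaves no trace in the cohomology). I would circumvent this by applying the anti-equivalence $(-)^{*}=\Hom_B(-,B)\colon\pmodcat{B}\lra\pmodcat{B\opp}$, which preserves radical morphisms, to turn $\cpx{\bar{T}}$ into a radical complex in $\Kb{\pmodcat{B\opp}}$ supported in $[-b,-a]$. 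Its top cohomology in degree $-a$ equals $H^{-a}(\cpx{T})$ by the second identity above, which vanishes once $-a>0$; the top-term principle applied to the dualised complex then yields $-a\leqslant 0$, i.e.\ $a\geqslant 0$. Combining both bounds produces a representative of $F(A)$ in degrees $[0,n]$, and the converse implication follows by repeating the argument with $F$, $F^{-1}$ and the roles of $A$, $B$ interchanged.
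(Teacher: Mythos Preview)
The paper does not supply its own proof of this lemma: it is quoted verbatim from \cite[Lemma 2.1]{hx10} and used as a black box, so there is nothing in the present paper to compare your argument against. That said, your proof is correct and is essentially the argument one finds in \cite{hx10}: convert the degree bound on $\cpx{T}$ into the vanishing of $H^i(\cpx{\bar T})$ and of $H^i\bigl(\Hom^\bullet_B(\cpx{\bar T},B)\bigr)$ outside $[0,n]$ and $[-n,0]$ respectively, take a radical (minimal) representative of $\cpx{\bar T}$ via Lemma~\ref{radical}, and use the observation that a nonzero radical complex of projectives has nonvanishing top cohomology (because radical differentials land in the radical of the target) to bound the top degree, together with the $(-)^*$-dual version to bound the bottom degree. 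Your handling of the lower bound via dualising to $\pmodcat{B\opp}$ is exactly the right way to avoid the failure of the naive bottom-term argument.
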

A special class of derived equivalences can be constructed by tilting modules. Recall that an $A$-module $T$ is said to be a \emph{tilting module} if $T$ satisfies the following three conditions: (1) $\pd(_AT)\leqslant n$, (2) $\Ext_A^i(T,T)=0$ for all $i>0$, and (3) there exists an exact sequence $0\ra A\ra T_0\ra \cdots\ra T_n\ra 0$ in $A\modcat$ with each $T_i$ in $\add(_AT)$. Let $P^{\bullet}(T): 0\ra  P_{n}\ra P_{n-1}\ra \cdots\ra P_{0}\ra 0$ be a projective resolution of $T$. Clearly, $P^{\bullet}(T)$ is a tilting complex over $A$ and $\End_{A}(T) \cong \End_{\Db{A}}(P^{\bullet}(T))$ as algebras.

\subsection{Extension dimensions}
In this subsection, we shall recall the definition and some results of the extension dimensions (see \cite{b08,zheng2020}).

Let $A$ be an Artin algebra. We denote by $A\modcat$ the category of all finitely generated left $A$-modules. For a class $\T$ of $A$-modules, we denote by $\add(\T)$ the smallest full subcategory of $A\modcat$ containing $\T$ and closed under finite direct sums and direct summands. When $\T$ consists of only one object $T$, we write $\add(T)$ for $\add(\T)$.
Let $\T_1,\T_2,\cdots,\T_n$ be subcategories of $A\modcat$. Define
\begin{align*}
\T_1\bullet \T_2
:&=\add(\{X\in A\modcat \mid \mbox{there exists an exact sequence } 0\lra T_1\lra  X \lra T_2\lra 0\\
&\qquad\qquad\qquad\qquad\quad\mbox{in } A\modcat\mbox{ with }T_1 \in \T_1\mbox{ and }T_2 \in \T_2\}).
\end{align*}
The operation $\bullet$ is associative. Inductively, for $n\geqslant 3$, define
$$\T_{1}\bullet  \T_{2}\bullet \dots \bullet\T_{n}:=\add(\{X\in A\modcat \mid \mbox{there exists an exact sequence }
0\lra T_1\lra  X \lra T_2\lra 0$$
$$\mbox{in } A\modcat \mbox{ with }T_1 \in \T_1\mbox{ and }T_2 \in \T_{2}\bullet \dots \bullet\T_{n}\}).$$
Thus $X\in \T_{1}\bullet  \T_{2}\bullet \dots \bullet\T_{n}$ if and only if there exist the following exact sequences
\begin{equation*}
\begin{cases}
\xymatrix@C=1.5em@R=0.1em{
0\ar[r]& T_1 \ar[r]& X\oplus X_1' \ar[r]& X_2 \ar[r]& 0,\\
0\ar[r]& T_2 \ar[r]& X_2\oplus X_2' \ar[r]& X_3 \ar[r]& 0,\\
&&\vdots&&\\
0\ar[r]& T_{n-1} \ar[r]& X_{n-1}\oplus X_{n-1}' \ar[r]& X_n \ar[r]& 0,}
\end{cases}
\end{equation*}
for some $A$-modules $X_i'$ such that $T_i\in \T_i$ and $X_{i+1}\in \T_{i+1}\bullet \T_{i+2}\bullet \cdots \bullet \T_n$ for $1\leqslant i\leqslant n-1$.

For a subcategory $\T$ of $A\modcat$, set $[\T]_{0}:=\{0\}$, $[\T]_{1}:=\add(\T)$,  $[\T]_{n}=[\T]_1\bullet [\T]_{n-1}$ for each $n\geqslant 2$. If $T\in A\modcat$, we write $[T]_{n}$ instead of $[\{T\}]_{n}$.
\begin{Lem}\label{sum-subcat}
Let $A$ be an Artin algebra.
Given two subcategories $\T_{1}$ and $\T_{2}$ of $A\modcat$. For nonnegative integers $m$ and $n$, we have
$$[\T_{1}]_{m}\bullet [\T_{2}]_{n}\subseteq [\T_{1}\oplus\T_{2}]_{m+n},$$
where $\T_{1}\oplus\T_{2}:=\{X\oplus Y\;|\; X\in\T_{1}, Y\in\T_{2}\}.$
\end{Lem}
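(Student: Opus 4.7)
The plan is to induct on $m$, with $n$ fixed but arbitrary. For the base case $m=0$, the identity $[\T_1]_0=\{0\}$ forces any $X\in [\T_1]_0\bullet[\T_2]_n$ to be a direct summand of some $B\in[\T_2]_n$; since additive subcategories contain $0$ we have $\T_2\subseteq \T_1\oplus\T_2$, hence $X\in[\T_2]_n\subseteq[\T_1\oplus\T_2]_n$, as required.

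For the inductive step, assume the claim holds for $m-1$ and let $X\in[\T_1]_m\bullet[\T_2]_n$. By definition there exists a short exact sequence
\[0\lra A\lra X\oplus X'\lra B\lra 0\]
with $A\in[\T_1]_m=[\T_1]_1\bullet[\T_1]_{m-1}$ and $B\in[\T_2]_n$, which in turn yields a second short exact sequence
\[0\lra A_1\lra A\oplus A''\lra A_2\lra 0\]
with $A_1\in[\T_1]_1$ and $A_2\in[\T_1]_{m-1}$. Adding the identity sequence on $A''$ to the first sequence produces $0\to A\oplus A''\to Y\to B\to 0$, where $Y:=X\oplus X'\oplus A''$.

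Next I form the pushout of the epimorphism $A\oplus A''\twoheadrightarrow A_2$ along the monomorphism $A\oplus A''\hookrightarrow Y$, obtaining a module $Q$. Standard diagram chasing (the $3\times 3$ lemma) gives two short exact sequences
\[0\lra A_2\lra Q\lra B\lra 0,\qquad 0\lra A_1\lra Y\lra Q\lra 0.\]
The first shows $Q\in[\T_1]_{m-1}\bullet[\T_2]_n$, which by the induction hypothesis is contained in $[\T_1\oplus\T_2]_{m+n-1}$. The second then places $Y$ in $[\T_1]_1\bullet[\T_1\oplus\T_2]_{m+n-1}$. Using the inclusion $[\T_1]_1\subseteq[\T_1\oplus\T_2]_1$ (which follows from $\T_1\subseteq\T_1\oplus\T_2$), we obtain
\[Y\in[\T_1\oplus\T_2]_1\bullet[\T_1\oplus\T_2]_{m+n-1}=[\T_1\oplus\T_2]_{m+n}.\]
Since $[\T_1\oplus\T_2]_{m+n}$ is defined using $\add$ and is therefore closed under direct summands, and $X$ is a summand of $Y$, we conclude $X\in[\T_1\oplus\T_2]_{m+n}$.

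The only non-routine step is the pushout construction, which converts the nested two-step extension $A_1\subset A\oplus A''\subset Y$ into a three-term filtration of $Y$ whose outer subquotient lies in $[\T_1]_1$ and whose inner subquotient lies in the category $[\T_1]_{m-1}\bullet[\T_2]_n$ to which the induction hypothesis directly applies. The rest is formal manipulation of $\bullet$ together with the trivial inclusions $\T_i\subseteq\T_1\oplus\T_2$.
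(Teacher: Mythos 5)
Your proof is correct, but it takes a more elementary route than the paper. The paper disposes of the lemma in one line, via the chain $[\T_{1}]_{m}\bullet [\T_{2}]_{n}\subseteq [\T_{1}\oplus\T_{2}]_{m}\bullet [\T_{1}\oplus\T_{2}]_{n}\subseteq[\T_{1}\oplus\T_{2}]_{m+n}$: the first inclusion is monotonicity of $[-]_{k}$ in the generating subcategory, and the second is the ``additivity'' $[\mathcal{S}]_{m}\bullet[\mathcal{S}]_{n}\subseteq[\mathcal{S}]_{m+n}$, which the paper gets for free from the asserted associativity of $\bullet$. Your induction on $m$, with the pushout of $A\oplus A''\twoheadrightarrow A_{2}$ along $A\oplus A''\hookrightarrow Y$ producing the two sequences $0\to A_{2}\to Q\to B\to 0$ and $0\to A_{1}\to Y\to Q\to 0$, is precisely a from-scratch proof of that associativity/additivity fact in the special case needed; your bookkeeping with the auxiliary summands $X'$ and $A''$ and the closure of $\add$ under direct summands is handled correctly, and the base case $m=0$ is fine since $\{0\}\bullet\mathcal{D}=\add(\mathcal{D})$. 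What the paper's argument buys is brevity by quoting a standard property of $\bullet$ (due to Dao--Takahashi); what yours buys is self-containedness, since it never invokes associativity and exhibits the required filtration of $Y$ explicitly. Both are valid.
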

\begin{proof}
  $[\T_{1}]_{m}\bullet [\T_{2}]_{n}\subseteq [\T_{1}\oplus\T_{2}]_{m}\bullet [\T_{1}\oplus\T_{2}]_{n}\subseteq[\T_{1}\oplus\T_{2}]_{m+n}.$
\end{proof}
Now, let $\T_{1}=\add (T_{1})$ and $\T_{2}=\add (T_{2})$, by Lemma \ref{sum-subcat}, we have
\begin{Koro}{\rm (\cite[Corollary 2.3]{zheng2020})}\label{sum-subseteq}
Let $A$ be an Artin algebra.
Let $T_{1},T_{2}\in A\modcat$. For nonnegative integers $m$ and $n$, we have
$$[T_{1}]_{m}\bullet [T_{2}]_{n}\subseteq [T_{1}\oplus T_{2}]_{m+n}.$$
\end{Koro}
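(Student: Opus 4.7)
The plan is to derive this corollary as an immediate specialization of Lemma \ref{sum-subcat}. First I would note that by unrolling the recursive definition $[\T]_{n}=[\T]_1\bullet [\T]_{n-1}$, the quantity $[T_i]_k$ depends only on $\add(T_i)$, because $[\{T_i\}]_1=\add(T_i)=\add(\add(T_i))=[\add(T_i)]_1$, and the two recursions then coincide by induction on $k$. Hence $[T_i]_k=[\add(T_i)]_k$ for every $k\geqslant 0$.

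Applying Lemma \ref{sum-subcat} to the subcategories $\T_1=\add(T_1)$ and $\T_2=\add(T_2)$ would then yield
$$[T_1]_m\bullet [T_2]_n \;\subseteq\; \bigl[\add(T_1)\oplus\add(T_2)\bigr]_{m+n}.$$
The remaining task is to replace the right-hand side by $[T_1\oplus T_2]_{m+n}$, and for this I would use the obvious monotonicity: if $\T\subseteq\U$ then $[\T]_k\subseteq[\U]_k$ for every $k$, proved by induction on $k$ starting from $[\T]_1=\add(\T)\subseteq\add(\U)=[\U]_1$. Thus it suffices to verify the set-theoretic inclusion $\add(T_1)\oplus\add(T_2)\subseteq \add(T_1\oplus T_2)$.

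To see this last inclusion, given $X\in\add(T_1)$ and $Y\in\add(T_2)$, I would pick integers $a,b$ such that $X$ is a direct summand of $T_1^{a}$ and $Y$ is a direct summand of $T_2^{b}$; setting $c=\max(a,b)$, the module $X\oplus Y$ becomes a direct summand of $T_1^{c}\oplus T_2^{c}\cong (T_1\oplus T_2)^{c}$, so it lies in $\add(T_1\oplus T_2)$. Chaining the three steps closes the argument.

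There is no substantive obstacle; the corollary is essentially a formal specialization of Lemma \ref{sum-subcat} together with the elementary observation that the additive closure of $T_1\oplus T_2$ absorbs all pairs of summands drawn from $\add(T_1)$ and $\add(T_2)$.
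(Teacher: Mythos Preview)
Your proposal is correct and follows exactly the paper's approach: the paper simply sets $\T_1=\add(T_1)$ and $\T_2=\add(T_2)$ and invokes Lemma~\ref{sum-subcat}, leaving implicit the identifications $[T_i]_k=[\add(T_i)]_k$ and $[\add(T_1)\oplus\add(T_2)]_{m+n}=[T_1\oplus T_2]_{m+n}$ that you have carefully spelled out.
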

\begin{Lem}{\rm (\cite[Corollary 2.3]{zheng2020})}\label{sum-subseteq-1}
For two module $T_{1}, T_{2}\in A\modcat$ and $m,n\geqslant 1$.
We have $[T_{1}]_{m}\oplus [T_{2}]_{n}\subseteq [T_{1}\oplus T_{2}]_{\max\{m,n\}}.$
\end{Lem}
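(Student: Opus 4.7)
The plan is to reduce the claim to two elementary facts about the operation $[-]_{k}$: monotonicity in $k$ and monotonicity in the class of modules. First, I would establish that $[\T]_{k}\subseteq[\T]_{k+1}$ for any subcategory $\T$ of $A\modcat$ and any $k\geqslant 1$. This is immediate from the definition: if $X\in[\T]_{k}$, then the split exact sequence $0\ra 0\ra X\ra X\ra 0$ realises $X$ as an object of $[\T]_{1}\bullet[\T]_{k}=[\T]_{k+1}$. Iterating, $[\T]_{k}\subseteq[\T]_{\ell}$ whenever $k\leqslant\ell$. Second, I would record that if $\T\subseteq\T'$, then $[\T]_{k}\subseteq[\T']_{k}$ for every $k\geqslant 1$; this follows by a straightforward induction on $k$ from the definition of $\bullet$ and the fact that $\add(\T)\subseteq\add(\T')$.

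With these tools in hand, let $X\in[T_{1}]_{m}$ and $Y\in[T_{2}]_{n}$, and without loss of generality assume $m\leqslant n$, so that $\max\{m,n\}=n$. Since $\add(T_{1})\subseteq\add(T_{1}\oplus T_{2})$ and $\add(T_{2})\subseteq\add(T_{1}\oplus T_{2})$, the second fact above gives
\[
X\in[T_{1}]_{m}\subseteq[T_{1}\oplus T_{2}]_{m},\qquad Y\in[T_{2}]_{n}\subseteq[T_{1}\oplus T_{2}]_{n}.
\]
Applying the first fact to the first inclusion, $X\in[T_{1}\oplus T_{2}]_{m}\subseteq[T_{1}\oplus T_{2}]_{n}$, and similarly $Y\in[T_{1}\oplus T_{2}]_{n}$. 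Since each $[-]_{k}$ is defined via $\add$ and is therefore closed under finite direct sums, we conclude $X\oplus Y\in[T_{1}\oplus T_{2}]_{n}=[T_{1}\oplus T_{2}]_{\max\{m,n\}}$, which is the desired inclusion.

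There is no real obstacle here; the lemma is a soft consequence of the two monotonicity properties of $[-]_{k}$. I would note, though, that the bound $\max\{m,n\}$ is strictly sharper than what one would obtain by naively applying Lemma \ref{sum-subcat} to the split extension $0\ra X\ra X\oplus Y\ra Y\ra 0$, which only yields $X\oplus Y\in[T_{1}\oplus T_{2}]_{m+n}$. The improvement comes precisely from collapsing $T_{1}$ and $T_{2}$ into the single generator $T_{1}\oplus T_{2}$ at the beginning rather than combining extensions at the end, which is the only conceptual step worth highlighting in the write-up.
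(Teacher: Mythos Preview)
Your proof is correct. The paper does not actually prove this lemma; it is stated without proof and attributed to \cite[Corollary 2.3]{zheng2020}. Your argument---reducing to the two monotonicity properties of $[-]_k$ (in the generator and in the index) together with closure of $[-]_k$ under direct sums---is exactly the standard one, and each step is justified by the definitions recalled in the paper. There is nothing further to compare here.
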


\begin{Lem}{\rm  (\cite[Lemma 3.5(2)]{zh22})}\label{wrd-lem} Let $A$ be an Artin algebra.
 Let $0\ra M_{n}\ra M_{n-1}\ra \cdots\ra M_0\ra X\ra 0$ be an exact sequence in $A\modcat$. Then
$$X\in [M_0]_1\bullet[\Omega^{-1}(M_{1})]_1
\bullet\cdots\bullet
[\Omega^{-n+1}(M_{n-1})]_1\bullet [\Omega^{-n}(M_n)]_1
\subseteq [\bigoplus_{i=0}^n\Omega^{-i}(M_i)]_{n+1}.$$
\end{Lem}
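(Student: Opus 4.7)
The plan is to prove the first containment by induction on $n$; the second inclusion $\subseteq [\bigoplus_{i=0}^n\Omega^{-i}(M_i)]_{n+1}$ will then follow immediately by iterating Corollary \ref{sum-subseteq}. The base case $n=0$ is trivial, since $0\to M_0\to X\to 0$ forces $X\cong M_0\in [M_0]_1$.

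For the inductive step, I would split the long exact sequence at $M_0$. Setting $K := \Img(M_1\to M_0)=\Ker(M_0\to X)$, we obtain a short exact sequence $0\to K\to M_0\to X\to 0$ and a shorter exact sequence $0\to M_n\to M_{n-1}\to\cdots\to M_1\to K\to 0$ of length one less (with $K$ playing the role of $X$ and $M_1,\dots,M_n$ playing the roles of $M_0,\dots,M_{n-1}$). Applying the inductive hypothesis to the shorter sequence gives
$$K\in [M_1]_1\bullet[\Omega^{-1}(M_2)]_1\bullet\cdots\bullet[\Omega^{-(n-1)}(M_n)]_1.$$
In parallel, pushing out $0\to K\to M_0\to X\to 0$ along the injective envelope $K\hookrightarrow I(K)$ yields a short exact sequence $0\to M_0\to X\oplus I(K)\to\Omega^{-1}(K)\to 0$ (the splitting of the right-hand side in the pushout square comes from injectivity of $I(K)$), and hence $X\in [M_0]_1\bullet[\Omega^{-1}(K)]_1$.

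The main obstacle is a \emph{shift lemma}: if $Y\in [T_1]_1\bullet[T_2]_1\bullet\cdots\bullet[T_m]_1$, then $\Omega^{-1}(Y)\in [\Omega^{-1}(T_1)]_1\bullet[\Omega^{-1}(T_2)]_1\bullet\cdots\bullet[\Omega^{-1}(T_m)]_1$. This is delicate because $\Omega^{-1}$ is canonically defined only up to injective summands. I would prove it by induction on $m$ via the horseshoe lemma for injective envelopes: given a short exact sequence $0\to T_1\to Z\to T_2\to 0$, horseshoing produces an exact sequence $0\to\Omega^{-1}(T_1)\to W\to\Omega^{-1}(T_2)\to 0$ in which the middle term $W$ differs from $\Omega^{-1}(Z)$ by a direct summand that is injective, hence by zero in any $[\cdot]_1\bullet[\cdot]_1$ class (the extra injective summand is harmless because each $\T_1\bullet\T_2$ is closed under direct summands via the $\add$ in its definition). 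Consequently $\Omega^{-1}(Z)\in [\Omega^{-1}(T_1)]_1\bullet[\Omega^{-1}(T_2)]_1$, and since $\Omega^{-1}$ respects direct summands (the injective envelope of a direct sum being the direct sum of injective envelopes), the same holds for any summand of $Z$, in particular for $Y$.

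Applying the shift lemma once to the containment for $K$ gives
$$\Omega^{-1}(K)\in [\Omega^{-1}(M_1)]_1\bullet[\Omega^{-2}(M_2)]_1\bullet\cdots\bullet[\Omega^{-n}(M_n)]_1,$$
and substituting into $X\in [M_0]_1\bullet[\Omega^{-1}(K)]_1$ (using associativity and monotonicity of $\bullet$) yields the desired first containment. The final inclusion into $[\bigoplus_{i=0}^n\Omega^{-i}(M_i)]_{n+1}$ is then obtained by $n$ successive applications of Corollary \ref{sum-subseteq}.
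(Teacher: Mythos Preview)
The paper does not supply a proof of this lemma; it is quoted verbatim from \cite[Lemma~3.5(2)]{zh22}. Your argument is correct: the pushout in step~4 indeed yields the short exact sequence $0\to M_0\to X\oplus I(K)\to\Omega^{-1}(K)\to 0$ (the pushout $P$ of $K\hookrightarrow M_0$ along $K\hookrightarrow I(K)$ sits simultaneously in $0\to I(K)\to P\to X\to 0$, which splits, and in $0\to M_0\to P\to\Omega^{-1}(K)\to 0$), and your shift lemma follows from the injective horseshoe exactly as you describe. That shift lemma is a mild refinement of Lemma~\ref{lem-3.6}(2) (there the factors are all copies of a single $[Y]_1$; here they are allowed to differ), proved by the same mechanism.
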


\begin{Lem}{\rm (\cite[Lemma 3.6]{zh22})}\label{lem-3.6}
Let $X, Y\in A\modcat$  satisfy $[X]_{n_{1}}\subseteq [Y]_{n_{2}}$ with $n_{1}, n_{2}\geqslant 1$.
Then for any $m\geqslant 0$, we have
\begin{itemize}
\item[$(1)$]  $[\Omega^{m}(X)]_{n_{1}}\subseteq[\Omega^{m}(Y)]_{n_{1}n_{2}}$.
\item[$(2)$]  $[\Omega^{-m}(X)]_{n_{1}}\subseteq[\Omega^{-m}(Y)]_{n_{1}n_{2}}$.
\end{itemize}
\end{Lem}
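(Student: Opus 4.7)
The plan is to reduce the statement to a single \emph{horseshoe step} for $\Omega$ (and dually for $\Omega^{-1}$), and then to iterate this step via two nested inductions. The key input is the following horseshoe observation \textbf{(H)}: for any short exact sequence $0 \to A \to B \to C \to 0$ in $A\modcat$, there is a projective module $Q$ and a short exact sequence
\[
0 \lra \Omega(A) \lra \Omega(B) \oplus Q \lra \Omega(C) \lra 0.
\]
I would derive (H) from the projective-cover horseshoe lemma: take projective covers $P_A \twoheadrightarrow A$ and $P_C \twoheadrightarrow C$, lift $P_C \to C$ to $P_C \to B$, and combine with $P_A \to A \hookrightarrow B$ to obtain a surjection $P_A \oplus P_C \twoheadrightarrow B$. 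The snake lemma applied to the induced $3 \times 3$-diagram gives an exact sequence $0 \to \Omega(A) \to K \to \Omega(C) \to 0$, where $K = \ker(P_A \oplus P_C \to B)$. Since the projective cover $P_B$ is a direct summand of $P_A \oplus P_C$, we have $P_A \oplus P_C \cong P_B \oplus Q$ for some projective $Q$, and tracing through gives $K \cong \Omega(B) \oplus Q$.

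Using (H), I would first establish the \emph{sub-claim}: if $X \in [Y]_n$, then $\Omega(X) \in [\Omega(Y)]_n$. The case $n = 1$ is clear because projective covers respect finite direct sums. For the inductive step, an element of $[Y]_{n+1} = [Y]_1 \bullet [Y]_n$ fits into a sequence $0 \to T \to X \oplus X' \to U \to 0$ with $T \in \add(Y)$ and $U \in [Y]_n$; then (H) produces
\[
0 \lra \Omega(T) \lra \Omega(X) \oplus \Omega(X') \oplus Q \lra \Omega(U) \lra 0
\]
with $\Omega(T) \in \add(\Omega Y)$ and $\Omega(U) \in [\Omega Y]_n$ by induction, placing the middle term in $[\Omega Y]_1 \bullet [\Omega Y]_n = [\Omega Y]_{n+1}$. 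Since this subcategory is closed under summands, $\Omega(X)$ itself belongs to it. Iterating the sub-claim $m$ times yields $\Omega^m(X) \in [\Omega^m Y]_n$.

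To finish (1), I would induct on $n_1$. For $n_1 = 1$, applying the sub-claim to $X \in [X]_1 \subseteq [Y]_{n_2}$ gives $\Omega^m(X) \in [\Omega^m Y]_{n_2}$ and hence $\add(\Omega^m X) \subseteq [\Omega^m Y]_{n_2}$. For the inductive step, let $Z \in [\Omega^m X]_{n_1} = [\Omega^m X]_1 \bullet [\Omega^m X]_{n_1 - 1}$, so there is an exact sequence $0 \to T \to Z \oplus Z' \to U \to 0$ with $T \in \add(\Omega^m X) \subseteq [\Omega^m Y]_{n_2}$ and $U \in [\Omega^m X]_{n_1 - 1} \subseteq [\Omega^m Y]_{(n_1 - 1)n_2}$ by induction. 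Corollary~\ref{sum-subseteq} (with $T_1 = T_2 = \Omega^m Y$) then gives $Z \oplus Z' \in [\Omega^m Y]_{n_2 + (n_1 - 1)n_2} = [\Omega^m Y]_{n_1 n_2}$, whence $Z$ lies there too. Part (2) is the exact dual: replace the projective-cover horseshoe by the injective-envelope version to obtain a sequence $0 \to \Omega^{-1}(A) \to \Omega^{-1}(B) \oplus J \to \Omega^{-1}(C) \to 0$ with $J$ injective, and the same two-step induction goes through verbatim. The only real obstacle is setting up (H) cleanly with the correct ``extra'' projective $Q$; once this is secured, the remainder is formal bookkeeping with the $\bullet$-operation and Corollary~\ref{sum-subseteq}.
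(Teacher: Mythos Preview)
Your proof is correct. The paper does not prove this lemma itself but imports it from \cite[Lemma~3.6]{zh22}, so there is no in-paper argument to compare against; your horseshoe-plus-double-induction approach is the natural one and is essentially how such a statement is established in the cited reference. One small remark: in the final step you invoke Corollary~\ref{sum-subseteq}, but since both factors already live in $[\Omega^m Y]_\ast$ you only need the basic associativity $[\Omega^m Y]_{n_2}\bullet[\Omega^m Y]_{(n_1-1)n_2}=[\Omega^m Y]_{n_1 n_2}$, which is immediate from the definition of $\bullet$.
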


Given a subcategory $\mathcal{C}$, we can define its extension dimension as follows.

\begin{Def}{\rm (\cite[Definition 5.2]{dao2014radius})} \label{def-subcat-extension-dim}
{\rm
Let $A$ be an Artin algebra. Given a subcategory $\mathcal{C}$ of $A\modcat$, we can define its extension dimension
as follows
$$\ed \,\mathcal{C}=\inf\{n\,|\,\mathcal{C}\subseteq [T]_{n+1} \text{ for some nonnegative integer } n \text{ and } T\in A\modcat\}.$$
}
\end{Def}

\begin{Def}\label{ed-def}
{\rm (\cite{b08})}
{\rm The extension dimension of $A\modcat$ is defined to be
\begin{align*}
 \ed(A):= & \ed A\modcat \\
  =& \inf\{n\,|\,A\modcat\subseteq [T]_{n+1} \text{ for some nonnegative integer } n \text{ and } T\in A\modcat\} \\
  =& \inf\{n\geqslant 0\mid A\modcat=[T]_{n+1}\mbox{ with } T\in A\modcat\}.
\end{align*}
}
\end{Def}

\begin{Lem} \label{ext-lem}
Let $A$ be an Artin algebra.

{\rm(1) (\cite[Example 1.6]{b08})} $A$ is representation finite if and only if $\ed(A)=0$.

{\rm(2) (\cite[Example 1.6]{b08})} $\ed(A)\leqslant \ell\ell(A)-1$, where $\ell\ell(A)$ stands for the Loewy length of $A$.

{\rm(3) (\cite[Corollary 3.6]{zheng2020})} $\ed(A)\leqslant \gd(A)$, where $\gd(A)$ stands for the global dimension of $A$.
\end{Lem}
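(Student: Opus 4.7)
The plan is to tackle the three parts of the lemma separately, using the calculus of $[T]_n$ and the operation $\bullet$ developed in the preliminaries.

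For part (1), I argue both directions directly. If $A$ is representation finite, the finitely many indecomposables $M_1,\ldots,M_r$ assemble into $T := \bigoplus_j M_j$, and Krull--Schmidt gives $A\modcat = \add(T) = [T]_1$, so $\ed(A) = 0$. Conversely, $\ed(A) = 0$ means $A\modcat = [T]_1 = \add(T)$ for some $T$, so every indecomposable module of $A$ is a direct summand of $T$, and $T$ has only finitely many non-isomorphic indecomposable summands, hence $A$ is representation finite.

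For part (2), I take $T = A/\rad(A)$ and use the radical filtration. The short exact sequence $0 \to \rad(M) \to M \to M/\rad(M) \to 0$ exhibits $M$ as the middle term, with $M/\rad(M) \in [T]_1$ (semisimple) and $\ell\ell(\rad(M)) < \ell\ell(M)$. Inducting on Loewy length and using associativity of $\bullet$ to rewrite $[T]_{k-1}\bullet[T]_1 = [T]_k$, I obtain $M \in [T]_{\ell\ell(M)} \subseteq [T]_{\ell\ell(A)}$ for every $M$, which gives $\ed(A) \leqslant \ell\ell(A) - 1$.

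For part (3), I set $d := \gd(A)$ (which I assume finite, else the bound is trivial) and build the universal module $T := \bigoplus_{i=0}^d \Omega^{-i}(A)$. Given $M$, pick a projective resolution $0 \to P_d \to \cdots \to P_0 \to M \to 0$. Lemma \ref{wrd-lem} yields $M \in [\bigoplus_{i=0}^d \Omega^{-i}(P_i)]_{d+1}$. Since each $P_i \in \add(A)$ and $\Omega^{-i}$ is additive, $\Omega^{-i}(P_i) \in \add(\Omega^{-i}(A))$, so $\bigoplus_i \Omega^{-i}(P_i) \in \add(T)$ and therefore $M \in [T]_{d+1}$, uniformly in $M$. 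This gives $\ed(A) \leqslant d$. The only real subtlety of the lemma lies in this last step: the direct output of Lemma \ref{wrd-lem} involves cosyzygies of the particular $P_i$, which a priori depend on $M$, and the fix is the additivity of cosyzygies, which lets all such pieces be consolidated into a single module $T$ independent of $M$. Parts (1) and (2) are essentially formal once one is fluent with the $\bullet$ calculus.
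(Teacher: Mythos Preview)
The paper does not supply its own proof of this lemma: all three parts are quoted from the literature (\cite[Example~1.6]{b08} for (1) and (2), \cite[Corollary~3.6]{zheng2020} for (3)) and stated without argument. Your proofs are correct and follow the standard arguments one would expect from those references: Krull--Schmidt for (1), the radical filtration with $T=A/\rad(A)$ for (2), and a projective resolution combined with Lemma~\ref{wrd-lem} for (3). In fact, the paper itself reuses your part-(2) argument almost verbatim in its proof of Lemma~\ref{pro-LL-2}, and your use of Lemma~\ref{wrd-lem} together with the additivity of $\Omega^{-i}$ on $\add(A)$ is exactly the mechanism behind the cited \cite[Corollary~3.6]{zheng2020}. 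Nothing is missing.
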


\begin{Lem}\label{subcat-dim}
Let $A$ be an Artin algebra. Given two subcategory $\mathcal{C}, \mathcal{D}$ of $A\modcat$. If
$\mathcal{C}\subseteq \mathcal{D}$, then  we have
$\ed \mathcal{C} \leqslant  \ed \mathcal{D}.$
\end{Lem}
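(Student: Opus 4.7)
The plan is to unfold the definition of extension dimension directly. By Definition \ref{def-subcat-extension-dim}, $\ed \mathcal{D}$ is the infimum of those nonnegative integers $n$ for which there exists some $T \in A\modcat$ with $\mathcal{D} \subseteq [T]_{n+1}$. I would first handle the trivial case: if $\ed \mathcal{D} = \infty$, the asserted inequality $\ed \mathcal{C} \leqslant \ed \mathcal{D}$ holds vacuously, so I may assume $\ed \mathcal{D} = n$ for some $n \in \mathbb{N}$.

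In that case, by definition of infimum, I can pick a module $T \in A\modcat$ with $\mathcal{D} \subseteq [T]_{n+1}$. Combining this with the hypothesis $\mathcal{C} \subseteq \mathcal{D}$ yields $\mathcal{C} \subseteq [T]_{n+1}$ by transitivity of inclusion. Applying the definition of $\ed \mathcal{C}$ to this witness $T$ gives $\ed \mathcal{C} \leqslant n = \ed \mathcal{D}$, which is the desired inequality.

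There is no real obstacle here; the lemma is a formal monotonicity property of extension dimension with respect to containment of subcategories, and the entire argument is a single application of the definition. The only thing to be slightly careful about is the convention for $\ed$ on a subcategory where no such $T$ exists (in which the infimum of the empty set is taken to be $+\infty$), but this is precisely why the $\ed \mathcal{D} = \infty$ case is vacuous.
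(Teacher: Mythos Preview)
Your proof is correct and follows essentially the same approach as the paper: set $\ed \mathcal{D}=n$, choose $T$ with $\mathcal{D}\subseteq [T]_{n+1}$, and use $\mathcal{C}\subseteq\mathcal{D}$ to conclude $\ed\mathcal{C}\leqslant n$. Your extra care with the $\ed\mathcal{D}=\infty$ case is harmless but unnecessary here, since for Artin algebras $\ed(A)<\infty$ by Lemma~\ref{ext-lem}(2), and hence every subcategory has finite extension dimension.
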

\begin{proof}
Let  $\ed\mathcal{D}=n$, then we have $ \mathcal{D}\subseteq [T]_{n+1}$
for some $T\in A\modcat$ by Definition \ref{def-subcat-extension-dim}.
Then we have $ \mathcal{C}\subseteq [T]_{n+1}$ since $\mathcal{C}\subseteq \mathcal{D}$.
Moreover, we have  $ \ed\mathcal{C}\leqslant n$ by Definition \ref{def-subcat-extension-dim}.
That is, $\ed \mathcal{C} \leqslant  \ed \mathcal{D}.$
\end{proof}

\begin{Prop}\label{subcat-dim2}
Let $A$ be an Artin algebra. Given two subcategory $\mathcal{C}, \mathcal{D}$ of $A\modcat$, we have
$$\max\{\ed \mathcal{C},\ed \mathcal{D}\}\leqslant\ed \mathcal{C}\bullet \mathcal{D} \leqslant \ed \mathcal{C}+\ed \mathcal{D}+1.$$
\end{Prop}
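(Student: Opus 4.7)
The plan is to prove the two inequalities separately. Both follow directly from the tools already established in this section, so no new ideas are required beyond careful bookkeeping.

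For the lower bound $\max\{\ed \mathcal{C},\ed \mathcal{D}\}\leqslant\ed \mathcal{C}\bullet \mathcal{D}$, the first step is to verify the containments $\mathcal{C}\subseteq \mathcal{C}\bullet\mathcal{D}$ and $\mathcal{D}\subseteq \mathcal{C}\bullet\mathcal{D}$. Given any $C\in\mathcal{C}$, the trivial short exact sequence $0\to C\to C\to 0\to 0$ (which is legal because $0\in\mathcal{D}$, as $\mathcal{D}$ is an additive subcategory) witnesses that $C$ lies in $\mathcal{C}\bullet\mathcal{D}$; the symmetric argument with $0\to 0\to D\to D\to 0$ handles $\mathcal{D}$. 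Applying Lemma \ref{subcat-dim} to each of these containments then yields the desired inequality.

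For the upper bound $\ed \mathcal{C}\bullet \mathcal{D}\leqslant\ed\mathcal{C}+\ed\mathcal{D}+1$, I may assume $m:=\ed\mathcal{C}$ and $n:=\ed\mathcal{D}$ are both finite, since otherwise the inequality holds vacuously. By Definition \ref{def-subcat-extension-dim}, there exist modules $T,S\in A\modcat$ with $\mathcal{C}\subseteq[T]_{m+1}$ and $\mathcal{D}\subseteq[S]_{n+1}$. For any $X\in\mathcal{C}\bullet\mathcal{D}$, by the definition of $\bullet$ there is a module $X'$ and a short exact sequence $0\to C\to X\oplus X'\to D\to 0$ with $C\in\mathcal{C}$ and $D\in\mathcal{D}$; thus $X\oplus X'\in[T]_{m+1}\bullet[S]_{n+1}$. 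Corollary \ref{sum-subseteq} then places $X\oplus X'$ inside $[T\oplus S]_{m+n+2}$, and since this class is closed under direct summands (each $[\mathcal{T}]_{k}$ is built as an $\add$-closure), we obtain $X\in[T\oplus S]_{m+n+2}$. Hence $\mathcal{C}\bullet\mathcal{D}\subseteq[T\oplus S]_{m+n+2}$, which gives $\ed\mathcal{C}\bullet\mathcal{D}\leqslant m+n+1$.

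There is no genuine obstacle here; the only point that requires care is the indexing convention, namely that $\ed\mathcal{C}=k$ corresponds to containment in $[T]_{k+1}$ rather than $[T]_{k}$, so that the extra ``$+1$'' in the upper bound arises naturally from the single short exact sequence used to form $\mathcal{C}\bullet\mathcal{D}$.
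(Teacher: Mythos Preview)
Your proof is correct and follows essentially the same approach as the paper: the upper bound via Corollary \ref{sum-subseteq} applied to $[T]_{m+1}\bullet[S]_{n+1}$, and the lower bound via the containments $\mathcal{C},\mathcal{D}\subseteq\mathcal{C}\bullet\mathcal{D}$ together with Lemma \ref{subcat-dim}. Your version is slightly more explicit in justifying the containments and in handling the direct-summand closure, but the argument is the same.
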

\begin{proof}
Let $\ed\mathcal{C}=m$ and $\ed\mathcal{D}=n$. We can set
$\mathcal{C}\subseteq [T_{1}]_{m+1}$ and $\mathcal{D}\subseteq [T_{2}]_{n+1}$
for some modules $T_{1},T_{2}\in A\modcat$. Then we have
\begin{align*}
\mathcal{C}\bullet \mathcal{D}\subseteq &[T_{1}]_{m+1}\bullet[T_{2}]_{n+1}\\
\subseteq & [T_{1}\oplus T_{2}]_{m+n+2}\ \ \ \ \text{(by Corollary \ref{sum-subseteq})}.
\end{align*}
By Definition \ref{def-subcat-extension-dim}, we have $\ed \mathcal{C}\bullet \mathcal{D} \leqslant m+n+1.$ On the other hand, due to $\mathcal{C}\subseteq \mathcal{C}\bullet \mathcal{D}$ and $\mathcal{D}\subseteq \mathcal{C}\bullet \mathcal{D}$, we have $\ed \mathcal{C}\leqslant\ed \mathcal{C}\bullet \mathcal{D}$ and $\ed \mathcal{D}\leqslant \ed \mathcal{C}\bullet \mathcal{D}$ by Lemma \ref{subcat-dim}. And then $\max\{\ed \mathcal{C},\ed \mathcal{D}\}\leqslant\ed \mathcal{C}\bullet \mathcal{D}$.
\end{proof}
Using mathematical induction and Proposition \ref{subcat-dim2}, we have
\begin{Koro}
Let $A$ be an Artin algebra. Given a positive integer $k$.
For some subcategories $\mathcal{C}_{i}$ of $A\modcat$ for $1\leqslant i \leqslant k$, we have
  $$\max\{\ed\mathcal{C}_{i}\;|\;1\leqslant i \leqslant k\}\leqslant\ed \mathcal{C}_{1}\bullet \mathcal{C}_{2}\bullet\cdots\bullet\mathcal{C}_{k} \leqslant \sum_{i=1}^{k}\ed\mathcal{C}_{i}+k-1.$$
\end{Koro}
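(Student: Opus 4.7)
The plan is to argue by induction on $k$, using the associativity of the operation $\bullet$ noted earlier in the excerpt. The base case $k=1$ is immediate since both the upper and lower bounds collapse to $\ed\mathcal{C}_1$, and the case $k=2$ is exactly the content of Proposition \ref{subcat-dim2}.

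For the inductive step, I would assume the statement for $k-1$ and then write
\[
\mathcal{C}_1\bullet \mathcal{C}_2\bullet\cdots\bullet\mathcal{C}_k \;=\; \mathcal{C}_1\bullet \bigl(\mathcal{C}_2\bullet\cdots\bullet\mathcal{C}_k\bigr),
\]
using associativity. Then Proposition \ref{subcat-dim2} applied to the two subcategories $\mathcal{C}_1$ and $\mathcal{C}_2\bullet\cdots\bullet\mathcal{C}_k$ gives
\[
\ed(\mathcal{C}_1\bullet\cdots\bullet\mathcal{C}_k) \;\leqslant\; \ed\mathcal{C}_1 + \ed(\mathcal{C}_2\bullet\cdots\bullet\mathcal{C}_k) + 1.
\]
Plugging in the inductive upper bound $\ed(\mathcal{C}_2\bullet\cdots\bullet\mathcal{C}_k)\leqslant \sum_{i=2}^k\ed\mathcal{C}_i + (k-2)$ yields the desired upper bound $\sum_{i=1}^k\ed\mathcal{C}_i + k-1$.

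For the lower bound, Proposition \ref{subcat-dim2} also gives
\[
\max\bigl\{\ed\mathcal{C}_1,\;\ed(\mathcal{C}_2\bullet\cdots\bullet\mathcal{C}_k)\bigr\} \;\leqslant\; \ed(\mathcal{C}_1\bullet\cdots\bullet\mathcal{C}_k),
\]
and the induction hypothesis ensures $\ed\mathcal{C}_i\leqslant \ed(\mathcal{C}_2\bullet\cdots\bullet\mathcal{C}_k)$ for each $2\leqslant i\leqslant k$. Combining these two chains of inequalities produces $\max\{\ed\mathcal{C}_i\mid 1\leqslant i\leqslant k\}\leqslant \ed(\mathcal{C}_1\bullet\cdots\bullet\mathcal{C}_k)$, completing the induction. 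There is essentially no obstacle here; the only thing to be careful about is to invoke associativity of $\bullet$ explicitly so that the induction is legitimate, and to keep the arithmetic $\ed\mathcal{C}_1 + \bigl(\sum_{i=2}^k\ed\mathcal{C}_i + (k-2)\bigr) + 1 = \sum_{i=1}^k\ed\mathcal{C}_i + (k-1)$ straight.
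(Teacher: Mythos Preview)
Your proof is correct and matches the strategy the paper itself announces in the lead-in to the Corollary (``Using mathematical induction and Proposition \ref{subcat-dim2}''); the paper's written argument merely unwinds the induction for the upper bound into a single direct application of Corollary \ref{sum-subseteq} (choosing $\mathcal{C}_i\subseteq[T_i]_{n_i+1}$ and concluding $\mathcal{C}_1\bullet\cdots\bullet\mathcal{C}_k\subseteq[\bigoplus_i T_i]_{\sum n_i+k}$) and omits the lower bound entirely. Your version is the more complete of the two.
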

\begin{proof}
  Let $\ed\mathcal{C}_{i}=n_{i}$ for each $i.$ We can set
  for some modules $T_{i}\in A\modcat$ for each $i.$  Then we have
   \begin{align*}
  \mathcal{C}_{1}\bullet \mathcal{C}_{2}\bullet\cdots\bullet\mathcal{C}_{k}\subseteq &[T_{1}]_{n_{1}+1}\bullet[T_{2}]_{n_{2}+1}\bullet\cdots \bullet[T_{k}]_{n_{k}+1}\\
\subseteq & [\oplus_{i=1}^{k}T_{i}]_{n_{1}+n_{2}+\cdots+n_{k}+k}\ \ \ \ \text{(by Lemma \ref{sum-subseteq})}.
\end{align*}
By Definition \ref{def-subcat-extension-dim}, we have $\ed \mathcal{C}_{1}\bullet \mathcal{C}_{2}\bullet\cdots\bullet\mathcal{C}_{k}\leqslant n_{1}+n_{2}+\cdots+n_{k}+k-1.$
\end{proof}

\begin{Bsp}{\rm
Let $A$ be the path algebra $kQ$ with quiver $Q$
$$\xymatrix{
&0 \ar@/_1pc/[r]_{x_{1}}\ar@/^1pc/[r]^{x_{0}}
&1
}.$$
Note that $A$ is of the infinite representation type and $\gd(A)=1$. By Lemma \ref{ext-lem}(1) and (3), we have $\ed(A)=1$.
Let $\mathcal{C}=\add(S(1))$ and $\mathcal{D}=\add(S(0))$.
For each $M\in A\modcat$, we have
$M\in \mathcal{C}\bullet\mathcal{D}$.
Then $\mathcal{C}\bullet\mathcal{D}=A\modcat$.
On the other hand, $\mathcal{D}\bullet\mathcal{C}=\add(S(0)\oplus S(1))$ since
$S(1)$ is projective and $S(0)$ is injective.
Then $\ed\mathcal{C}=\ed\mathcal{D}=\ed\mathcal{D}\bullet\mathcal{C}=0$.
And we have
 $$\ed \mathcal{C}\bullet \mathcal{D} = \ed \mathcal{C}+\ed \mathcal{D}+1$$
 and
   $$\max\{\ed \mathcal{C},\ed \mathcal{D}\}=\ed \mathcal{D}\bullet \mathcal{C}.$$
}
\end{Bsp}

\begin{Def} {\rm
Let $A$ be an Artin algebra and $A\modcat$ be the category of finitely generated left $A$-modules. An $A$-module $K$ is called an $n$\textit{-th syzygy module} ($n>0$) if there is an exact sequence of $A$-modules: $0\ra K\ra P^0\ra P^1\ra \cdots \ra P^{n-1}\ra M\ra 0$ for some $A$-module $M$ with $P^i$ projective; $K$ is called an $\infty$\textit{-th syzygy module} if there is an exact sequence of $A$-modules: $0\ra K\ra P^0\ra P^1\ra P^2 \cdots $ with $P^i$ projective. Denoted by $\Omega^{n}(A\modcat)$
the full subcategory of $A\modcat$
consisting of all $n$-th syzygies $A$-modules and by $\Omega^{\infty}(A\modcat)$
the full subcategory of $A\modcat$
consisting of all $\infty$-th syzygies $A$-modules.
For convenience, set $\Omega^{0}(A\modcat):=A\modcat$. Clearly, for nonnagative $n$, we have
$$\Omega^{n}(A\modcat)=\{K\oplus P\in A\modcat \mid K\cong\Omega^{n}(M) \text{ for some }A\text{-module } M\text{ and } P\in\pmodcat{A}\}.$$
}\end{Def}
We have the following observation.
\begin{Lem}\label{subseteq-modcat}
Let $A$  be an Artin algebra.
Then for fixed $i\in \mathbb{N}$, we have

$(1)$ $\Omega^{\infty}(A\modcat)\subseteq \Omega^{i+1}(A\modcat)\subseteq \Omega^{i}(A\modcat).$

$(2)$ $\Omega^{i}(\Omega^{n}(A\modcat))=\Omega^{i+n}(A\modcat)$ for each $n\in \mathbb{N}$.

$(3)$  $\Omega^{i}(\Omega^{\infty}(A\modcat))=\Omega^{\infty}(A\modcat)$.
\end{Lem}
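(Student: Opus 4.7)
The plan is to handle all three parts uniformly by the standard technique of truncating and splicing exact sequences. The recurring device is that whenever two consecutive maps $P^{i-1} \lra P^i \lra P^{i+1}$ occur inside a longer resolution, the submodule $L := \Img(P^{i-1} \lra P^i) = \Ker(P^i \lra P^{i+1})$ fits simultaneously as the cokernel of the left part and the kernel of the right part, so a long exact sequence splits at $L$ into two shorter exact sequences sharing $L$ as a common term.

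For (1), the inclusion $\Omega^{\infty}(A\modcat) \subseteq \Omega^{i+1}(A\modcat)$ is immediate by applying this device to an $\infty$-th syzygy presentation $0 \lra K \lra P^0 \lra P^1 \lra \cdots$ of $K$: setting $M := \Img(P^i \lra P^{i+1})$ yields the truncation $0 \lra K \lra P^0 \lra \cdots \lra P^i \lra M \lra 0$, exhibiting $K$ as an $(i+1)$-th syzygy. The inclusion $\Omega^{i+1}(A\modcat) \subseteq \Omega^{i}(A\modcat)$ is the same move at one lower position: given $0 \lra K \lra P^0 \lra \cdots \lra P^i \lra M \lra 0$, setting $L := \Img(P^{i-1} \lra P^i)$ produces $0 \lra K \lra P^0 \lra \cdots \lra P^{i-1} \lra L \lra 0$, so $K \in \Omega^i(A\modcat)$; the $i=0$ case is trivial under the convention $\Omega^0(A\modcat) = A\modcat$.

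Part (2) combines splicing and truncation. For $\Omega^i(\Omega^n(A\modcat)) \subseteq \Omega^{i+n}(A\modcat)$, given presentations $0 \lra K \lra P^0 \lra \cdots \lra P^{i-1} \lra M \lra 0$ and $0 \lra M \lra Q^0 \lra \cdots \lra Q^{n-1} \lra M' \lra 0$ (the latter existing because $M \in \Omega^n(A\modcat)$), one concatenates through the composite $P^{i-1} \lra M \lra Q^0$ to obtain an $(i+n)$-term resolution of $K$. For the reverse inclusion, start from an exact sequence $0 \lra K \lra P^0 \lra \cdots \lra P^{i+n-1} \lra M' \lra 0$ and cut in the middle using $L := \Img(P^{i-1} \lra P^i)$: the left piece exhibits $K$ as an $i$-th syzygy of $L$, while the right piece exhibits $L$ as an $n$-th syzygy, so that $L \in \Omega^n(A\modcat)$.

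Part (3) runs identically with $n$ replaced by $\infty$: truncating an $\infty$-presentation of $K$ at step $i$ writes $K$ as an $i$-th syzygy of $L := \Img(P^{i-1} \lra P^i)$, and the tail $0 \lra L \lra P^i \lra P^{i+1} \lra \cdots$ of the original resolution shows that $L$ itself lies in $\Omega^{\infty}(A\modcat)$; conversely, splicing an $i$-presentation with the infinite resolution of the middle term yields an $\infty$-resolution of $K$. None of the steps is substantive; there is no real obstacle. The only care needed is the boundary behaviour at $i=0$, and the bookkeeping that the definition of $\Omega^n(A\modcat)$ allows an extra projective summand of the form $K \oplus P$, which is harmlessly absorbed into the projective terms on either side of any splice or truncation.
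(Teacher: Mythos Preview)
Your proof is correct: the standard truncate-and-splice manipulation of exact sequences handles all three parts, and you have taken appropriate care of the boundary case $i=0$ and of the extra projective summands allowed by the definition of $\Omega^n(A\modcat)$. The paper itself states this lemma as an ``observation'' and provides no proof, so there is nothing to compare against; your argument supplies exactly the routine verification the authors evidently considered unnecessary to spell out.
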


\begin{Lem}\label{pro-LL-2}
Let $A$ be a nonsemisimple Artin algebra. Then, for each $i\geqslant 1$,
$$\ed \Omega^{\infty}(A\modcat)\leqslant\ed \Omega^{i}(A\modcat)\leqslant \ell\ell(A)-2.$$
\end{Lem}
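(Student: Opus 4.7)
The plan is to prove the two inequalities separately. The left inequality $\ed \Omega^{\infty}(A\modcat) \leqslant \ed \Omega^{i}(A\modcat)$ is essentially free: Lemma \ref{subseteq-modcat}(1) gives $\Omega^{\infty}(A\modcat) \subseteq \Omega^{i}(A\modcat)$, and Lemma \ref{subcat-dim} tells us extension dimension is monotone under subcategory inclusion, so I would simply cite these two facts.

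For the right inequality I will exhibit an explicit module $T$ such that $\Omega^{i}(A\modcat) \subseteq [T]_{\ell\ell(A)-1}$. The crucial observation is that for $i \geqslant 1$, every $M \in \Omega^{i}(A\modcat)$ has the form $M = K \oplus P$ with $P$ projective and $K \cong \Omega^{i}(N)$ for some $A$-module $N$, and such a $K$ embeds into $\rad Q$ for $Q$ the projective cover of $\Omega^{i-1}(N)$ (since the kernel of a projective cover always lies in the radical of its source). Writing $\ell := \ell\ell(A)$ and using $\rad Q = \rad(A)\cdot Q$, this gives $\rad^{\ell-1}(K) \subseteq \rad^{\ell}(A)\cdot Q = 0$, hence $\ell\ell(K) \leqslant \ell - 1$.

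The radical filtration of $K$ then has at most $\ell - 1$ semisimple subquotients, so $K \in [A/\rad A]_{\ell-1}$; meanwhile $P \in [A]_{1}$. Applying Lemma \ref{sum-subseteq-1} (using $\ell \geqslant 2$, which is where the nonsemisimplicity hypothesis enters) yields
$$M = K \oplus P \in [A/\rad A]_{\ell-1} \oplus [A]_{1} \subseteq [(A/\rad A) \oplus A]_{\ell-1}.$$
Taking $T := (A/\rad A) \oplus A$ thus gives $\Omega^{i}(A\modcat) \subseteq [T]_{\ell-1}$, which by Definition \ref{def-subcat-extension-dim} means $\ed \Omega^{i}(A\modcat) \leqslant \ell - 2$, as required.

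The step that needs the most care is handling the possibly large projective direct summand $P$: a priori $\ell\ell(P)$ can be as large as $\ell\ell(A)$, so the radical filtration applied directly to $M$ only gives the weaker bound $\ell - 1$. The key trick is to peel off the projective part first, apply the sharper Loewy bound $\ell-1$ only to the genuine syzygy piece $K$, and then merge back via Lemma \ref{sum-subseteq-1} — which does not accumulate additively in the $\oplus$-version, unlike the $\bullet$-version. This is exactly what buys the improvement from $\ell\ell(A)-1$ in Lemma \ref{ext-lem}(2) to $\ell\ell(A)-2$ here.
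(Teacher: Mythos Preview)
Your proof is correct and follows the paper's strategy: the genuine syzygy $\Omega(M)$ embeds in $\rad P$, hence has Loewy length at most $\ell\ell(A)-1$, and the radical filtration places it in $[A/\rad A]_{\ell-1}$. Your explicit treatment of the projective summand via Lemma~\ref{sum-subseteq-1}, arriving at the generator $(A/\rad A)\oplus A$, is in fact more careful than the paper, which simply asserts $\Omega(A\modcat)\subseteq [A/\rad A]_{\ell-1}$ without addressing the projective part---an inclusion that can fail (e.g.\ for $A=k[x]/(x^2)$ the projective $A$ is not in $[k]_1$), though the extension-dimension bound survives once $A$ is added to the generator exactly as you do.
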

\begin{proof}
By Proposition \ref{subcat-dim} and Lemma \ref{subseteq-modcat}(1),
for each $i\geqslant 1$,
$$\ed \Omega^{\infty}(A\modcat)\leqslant\ed \Omega^{i+1}(A\modcat)\leqslant\ed \Omega^{i}(A\modcat).$$
Then we have to show $\Omega(A\modcat)\leqslant \ell\ell(A)-2$.
For each $A$-module $M$, consider the following short exact sequence
$$0\to \Omega(M) \to P \to M \to 0$$
where $P$ is the projective cover of $M$.
Then
$\Omega(M)\subseteq\rad(P)$.
Note that
$$\ell\ell(\Omega(M))\leqslant \ell\ell(\rad(P))=\ell\ell(P)-1\leqslant\ell\ell(A)-1.$$
Defin $n:=\ell\ell(\Lambda)$ and $N:=\Omega(M)$. It follows from $\rad^{n-1}(N)=0$ that there are the following short exact sequences
\begin{equation*}
\begin{cases}
\xymatrix@C=1.5em@R=0.1em{
0\ar[r]& \rad N\ar[r] & N\ar[r]& N/\rad N\ar[r]&0,\\
0\ar[r]& \rad^{2} N\ar[r] & \rad N\ar[r]& \rad N/\rad^{2} N\ar[r]&0,\\
&&\vdots&&\\
0\ar[r]& \rad^{n-2} N\ar[r] & \rad^{n-3}N\ar[r]& \rad^{2} N/\rad^{3} N\ar[r]&0,
}
\end{cases}
\end{equation*}
where $\rad^i(N)/\rad^{i+1}(N)\in [A/\rad(A)]_1$ and $\rad^{n-2}(N)\in [A/\rad(A)]_1$ for $0\le i\le n-3$.
Then $\Omega(M)=N\in [A/\rad(A)]_{n-1}$, and therefore
$\Omega(A\modcat)\subseteq [A/\rad(A)]_{n-1}.$
By Definition \ref{def-subcat-extension-dim}, we have
$\ed \Omega(A\modcat)\leqslant \ell\ell(A)-2$.
\end{proof}

\begin{Prop}\label{syzygy-stablity}
Let $A$ be an Artin algebra. Then there exists a nonnegative integer $m$, such that
$\ed \Omega^{m}(A\modcat)=\ed \Omega^{m+i}(A\modcat)$
for each $i\geqslant 1.$
\end{Prop}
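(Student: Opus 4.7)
The statement follows almost immediately from the machinery already assembled. The plan is to view the sequence $\{\ed \Omega^{i}(A\modcat)\}_{i\geqslant 1}$ as a weakly decreasing sequence of nonnegative integers that is bounded above, and hence must eventually stabilize.

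First I would dispose of the trivial case in which $A$ is semisimple: then every module is projective, so $\Omega^{i}(A\modcat) = A\pmodcat$ is representation-finite for all $i\geqslant 1$ and the extension dimensions are all $0$; any $m\geqslant 1$ works.

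Assume therefore that $A$ is nonsemisimple. By Lemma \ref{subseteq-modcat}(1) we have the chain of inclusions
$$\cdots \subseteq \Omega^{i+1}(A\modcat) \subseteq \Omega^{i}(A\modcat) \subseteq \cdots \subseteq \Omega^{1}(A\modcat),$$
and applying Lemma \ref{subcat-dim} to each inclusion yields
$$\ed\Omega^{i+1}(A\modcat) \leqslant \ed\Omega^{i}(A\modcat) \quad \text{for every } i\geqslant 1.$$
Thus $\{\ed\Omega^{i}(A\modcat)\}_{i\geqslant 1}$ is a weakly decreasing sequence of nonnegative integers. Moreover, Lemma \ref{pro-LL-2} gives the uniform upper bound $\ed\Omega^{i}(A\modcat) \leqslant \ell\ell(A)-2$ for all $i\geqslant 1$, so each term lies in the finite set $\{0,1,\dots,\ell\ell(A)-2\}$.

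Any weakly decreasing sequence of nonnegative integers must stabilize, so there exists $m\geqslant 1$ with $\ed\Omega^{m}(A\modcat) = \ed\Omega^{m+i}(A\modcat)$ for every $i\geqslant 1$, which is the required conclusion. There is no real obstacle here: the whole content is already packaged into Lemma \ref{subseteq-modcat}, Lemma \ref{subcat-dim} and Lemma \ref{pro-LL-2}, and the proposition is simply the observation that a bounded monotone sequence of integers is eventually constant.
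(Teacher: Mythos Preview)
Your proof is correct and follows essentially the same route as the paper: show that $\{\ed\Omega^{i}(A\modcat)\}$ is a weakly decreasing sequence of nonnegative integers (via Lemma~\ref{subseteq-modcat} and Lemma~\ref{subcat-dim}) and conclude that it stabilizes. The only cosmetic difference is that the paper bounds the sequence by $\ed A\modcat<\infty$ using Lemma~\ref{ext-lem}(2) directly, rather than splitting off the semisimple case and invoking Lemma~\ref{pro-LL-2}; either bound works, since all that is needed is finiteness of the terms.
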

\begin{proof}
By Lemma \ref{ext-lem}(2), we know that $\ed A\modcat<\infty$. It follows from Lemma \ref{subcat-dim} that for $j\in \mathbb{Z}$, $$\ed \Omega^{j+1}(A\modcat)\leqslant \ed \Omega^{j}(A\modcat)<\infty.$$
Thus there exists a nonpositive integer $m$ such that $\ed \Omega^{m}(A\modcat)=\ed \Omega^{m+i}(A\modcat)$
for each $i\geqslant 1.$
\end{proof}

The above proposition tells us that $\ed \Omega^{i}(A\modcat)=\ed \Omega^{i+1}(A\modcat)$ for sufficiently large $i$. A natural question is whether $\ed \Omega^{\infty}(A\modcat)$ is equal to $\ed \Omega^{i}(A\modcat)$ for sufficiently large $i$. The following example gives a negative answer.

\begin{Bsp}{\rm
{\rm (\cite[Example 54]{bm23})}
Let $A =kQ/I$ be an algebra where $Q$ is
$$\xymatrix{ 1 \ar@/^8mm/[rrr]^{\bar{\alpha}_1} \ar@/^2mm/[rrr]^{\alpha_1} \ar@/_2mm/[rrr]_{\beta_1} \ar@/_8mm/[rrr]_{\bar{\beta_1}} & &  & 2 \ar@/^8mm/[ddd]^{\bar{\alpha}_2} \ar@/^2mm/[ddd]^{\alpha_2} \ar@/_2mm/[ddd]_{\beta_2} \ar@/_8mm/[ddd]_{\bar{\beta_2}} \\ & &  &\\& & & & \\ 4 \ar@/^8mm/[uuu]^{\bar{\alpha}_4} \ar@/^2mm/[uuu]^{\alpha_4} \ar@/_2mm/[uuu]_{\beta_4} \ar@/_8mm/[uuu]_{\bar{\beta_4}} &  & &  3 \ar@/^8mm/[lll]^{\bar{\alpha}_3} \ar@/^2mm/[lll]^{\alpha_3} \ar@/_2mm/[lll]_{\beta_3} \ar@/_8mm/[lll]_{\bar{\beta_3}}}$$
and $I = \langle \alpha_{i}\alpha_{i+1}-\bar{\alpha}_{i}\bar{\alpha}_{i+1},\ \beta_{i}\beta_{i+1}-\bar{\beta}_{i}\bar{\beta}_{i+1},\ \alpha_{i}\bar{\alpha}_{i+1},\ \bar{\alpha}_{i}\alpha_{i+1},\ \beta_{i}\bar{\beta}_{i+1},\ \bar{\beta}_{i}\beta_{i+1},\text{ for } i \in \mathbb{Z}_4, \ J^3 \rangle$.
It follows from \cite{bm23} that $\Omega^{\infty}(A\modcat)=\pmodcat{A}$ and $\Omega^{n}(A\modcat)$ is infinite representation type for each $n\in\mathbb{N}$. Thus
$$\ed \Omega^{\infty}(A\modcat)=0 \text{ and }\ed  \Omega^{n}(A\modcat)\geqslant 1,\; n\in \mathbb{N}.$$
Note that $A$ is radical cube zero. By Lemma \ref{pro-LL-2}, $\ed \Omega^{n}(A\modcat)\leqslant 1$ for each $n\geqslant 1$. Thus $\ed  \Omega^{n}(A\modcat)=1$ for each $n\geqslant 1$.
}\end{Bsp}

\begin{Lem}\label{lem-syzygy-dimension}
Let $A$ be an Artin algebra. Given a nonnegative integer $m$, we have
$$\ed \Omega^{m-i}(A\modcat)\leqslant \ed \Omega^{m}(A\modcat)+i$$ for each integer $0\leqslant i \leqslant m.$
In particular, $\ed A\modcat\leqslant \ed \Omega^{m}(A\modcat)+m.$
\end{Lem}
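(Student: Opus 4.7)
The plan is to prove the one-step inequality $\ed\Omega^{k-1}(A\modcat)\leqslant \ed\Omega^{k}(A\modcat)+1$ for every $k\geqslant 1$, and then obtain the asserted bound by a straightforward downward induction on $i$, starting from the trivial base case $i=0$. The ``in particular'' statement then follows by setting $i=m$ and invoking $\Omega^{0}(A\modcat)=A\modcat$.

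For the one-step inequality, set $n:=\ed\Omega^{k}(A\modcat)$, so that by Definition \ref{def-subcat-extension-dim} there exists $T\in A\modcat$ with $\Omega^{k}(A\modcat)\subseteq[T]_{n+1}$. Pick an arbitrary $X\in\Omega^{k-1}(A\modcat)$ and take its projective cover sequence
\[
0\lra \Omega(X)\lra P\lra X\lra 0,
\]
so that $P$ is projective and $\Omega(X)\in\Omega^{k}(A\modcat)\subseteq[T]_{n+1}$.

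The crucial trick is that, although this short exact sequence places $X$ on the \emph{right} rather than the middle, Lemma \ref{wrd-lem} (applied with $n=1$) still yields
\[
X\in[P]_{1}\bullet[\Omega^{-1}(\Omega(X))]_{1}\subseteq[A]_{1}\bullet[\Omega^{-1}(\Omega(X))]_{1}.
\]
Since $[\Omega(X)]_{1}\subseteq[T]_{n+1}$, Lemma \ref{lem-3.6}(2) gives $[\Omega^{-1}(\Omega(X))]_{1}\subseteq[\Omega^{-1}(T)]_{n+1}$. Combining these with Corollary \ref{sum-subseteq} yields
\[
X\in[A]_{1}\bullet[\Omega^{-1}(T)]_{n+1}\subseteq[A\oplus\Omega^{-1}(T)]_{n+2}.
\]
Since $X\in\Omega^{k-1}(A\modcat)$ was arbitrary, Definition \ref{def-subcat-extension-dim} gives $\ed\Omega^{k-1}(A\modcat)\leqslant n+1$, as required.

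The main (and only) obstacle is recognizing the right way to move from an estimate on $\Omega(X)$ to one on $X$: the naive short exact sequence gives $P$, not $X$, in the middle of an extension. The bridge is Lemma \ref{wrd-lem}, which converts the cokernel into a two-term extension involving the \emph{cosyzygy} $\Omega^{-1}(\Omega(X))$, and Lemma \ref{lem-3.6}(2), which transports an inclusion $[\Omega(X)]_{1}\subseteq[T]_{n+1}$ forward along $\Omega^{-1}$ without amplifying the filtration length. Once these two lemmas are combined as above, the downward induction on $i$ is automatic, and specializing to $i=m$ gives $\ed A\modcat\leqslant\ed\Omega^{m}(A\modcat)+m$.
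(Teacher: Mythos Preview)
Your proof is correct and uses the same key ingredients as the paper's proof---Lemma \ref{wrd-lem}, Lemma \ref{lem-3.6}(2), and Corollary \ref{sum-subseteq}---with the only difference being that the paper treats a length-$i$ projective resolution of $Y\in\Omega^{m-i}(A\modcat)$ all at once (landing directly in $[(\bigoplus_{j=0}^{i-1}\Omega^{-j}(A))\oplus\Omega^{-i}(T)]_{n+i+1}$), whereas you prove the one-step case and induct.
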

\begin{proof}
Suppose $\ed \Omega^{m}(A\modcat)=n$.
Then we have $\Omega^{m}(A\modcat)\subseteq [T]_{n+1}$ for some $T\in A\modcat.$
For each $Y\in \Omega^{m-i}(A\modcat)$, we consider the following exact sequence in $A\modcat$
$$0\to \Omega^{i}(Y)\to P_{i-1}\to \cdots \to P_1\to P_0\to Y\to 0$$
where $P_j\in \pmodcat{A}$ for $0\leqslant j\leqslant i-1$. Note that $\Omega^{i}(Y)\in \Omega^{i}( \Omega^{m-i}(A\modcat))= \Omega^{m}(A\modcat)\subseteq [T]_{n+1}$.
Then
\begin{align*}
Y\in &[P_{0}]_{1}\bullet [\Omega^{-1}(P_{1})]_{1}\bullet\cdots\bullet  [\Omega^{-(i-1)}(P_{i-1})]_{1}\bullet [ \Omega^{-i}(\Omega^{i}(Y))]_{1}\ \ \ \ \text{(by Lemma \ref{wrd-lem})}\\
\subseteq & [P_{0}]_{1}\bullet [\Omega^{-1}(P_{1})]_{1}\bullet\cdots\bullet  [\Omega^{-(i-1)}(P_{i-1})]_{1}\bullet [ \Omega^{-i}(T)]_{n+1}\ \ \ \ \text{(by Lemma \ref{lem-3.6})}\\
\subseteq & [A]_{1}\bullet [\Omega^{-1}(A)]_{1}\bullet\cdots\bullet  [\Omega^{-(i-1)}(A)]_{1}\bullet [ \Omega^{-i}(T)]_{n+1}\\
\subseteq &[A\oplus \Omega^{-1}(A)\oplus\cdots\oplus\Omega^{-(i-1)}(A)\oplus \Omega^{-i}(T)]_{n+i+1}\ \ \ \ \text{(by Corollary \ref{sum-subseteq})}\\
= &[\big(\bigoplus_{j=0}^{i-1}\Omega^{-j}(A)\big)\oplus \Omega^{-i}(T)]_{n+i+1}.
\end{align*}
Then $\Omega^{m-i}(A\modcat)\subseteq [\big(\bigoplus_{j=0}^{i-1}\Omega^{-j}(A)\big)\oplus \Omega^{-i}(T)]_{n+i+1}.$
Thus $\ed \Omega^{m-i}(A\modcat)\leqslant n+i$.
\end{proof}

\begin{Def}\label{def-syzygy-modcat}
{\rm
A subcategory $\mathcal{C}$ of $A\modcat$ is \emph{n-syzygy-finite} if there is some nonnegative integer $n$ such that $\Omega^n(\mathcal{C})$ is representation-finite, that is, the number of non-isomorphic indecomposable direct summands of objects in $\Omega^n(\mathcal{C})$ is finite. An algebra $A$ is \emph{syzygy-finite} if $A\modcat$ is $n$-syzygy-finite
 for some nonnegative integer $n$.}
\end{Def}
\begin{Theo}
Let $A$ be an Artin algebra. Suppose that $A$ is $n$-syzygy finite and  $\ed(A)=n$.
Then $\ed \Omega^{i}(A\modcat)=n-i$ for each $0\leqslant i \leqslant n.$
\end{Theo}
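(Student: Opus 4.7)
The plan is to read off both inequalities $\ed \Omega^{i}(A\modcat) \leqslant n-i$ and $\ed \Omega^{i}(A\modcat) \geqslant n-i$ directly from Lemma \ref{lem-syzygy-dimension}, using the boundary values $\ed(A) = n$ (by hypothesis) and $\ed \Omega^{n}(A\modcat) = 0$ (which will follow from the $n$-syzygy-finiteness).

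First I would verify the boundary case $\ed \Omega^{n}(A\modcat) = 0$. By Definition \ref{def-syzygy-modcat}, the hypothesis that $A$ is $n$-syzygy-finite means $\Omega^{n}(A\modcat)$ has only finitely many indecomposable objects up to isomorphism; taking the direct sum $T$ of a complete set of representatives gives $\Omega^{n}(A\modcat) \subseteq \add(T) = [T]_{1}$, so $\ed \Omega^{n}(A\modcat) = 0$ by Definition \ref{def-subcat-extension-dim}.

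For the upper bound, I would apply Lemma \ref{lem-syzygy-dimension} with the parameter $m$ taken to be $n$ and the index shifted by substituting $i \mapsto n - i$, obtaining
$$\ed \Omega^{i}(A\modcat) \;=\; \ed \Omega^{n-(n-i)}(A\modcat) \;\leqslant\; \ed \Omega^{n}(A\modcat) + (n-i) \;=\; n - i$$
for every $0 \leqslant i \leqslant n$. For the matching lower bound, I would use the ``in particular'' assertion of the same lemma, applied with parameter $m = i$: this gives $\ed A\modcat \leqslant \ed \Omega^{i}(A\modcat) + i$, that is, $n \leqslant \ed \Omega^{i}(A\modcat) + i$, which rearranges to $\ed \Omega^{i}(A\modcat) \geqslant n - i$.

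Putting the two inequalities together yields the desired equality $\ed \Omega^{i}(A\modcat) = n - i$ for each $0 \leqslant i \leqslant n$. There is no real obstacle in this argument, since Lemma \ref{lem-syzygy-dimension} is already sharp enough in both directions: once the two endpoints $\ed \Omega^{0}(A\modcat) = n$ and $\ed \Omega^{n}(A\modcat) = 0$ are fixed, the two inequalities from the lemma pin down every intermediate value exactly, forcing a uniform drop of one in $\ed$ per syzygy step.
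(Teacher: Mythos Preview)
Your proof is correct and follows essentially the same approach as the paper's own proof: both establish $\ed \Omega^{n}(A\modcat)=0$ from $n$-syzygy-finiteness, then use Lemma~\ref{lem-syzygy-dimension} twice (once with $m=n$ for the upper bound, once with $m=i$ for the lower bound). The only cosmetic difference is that the paper phrases the lower bound as a proof by contradiction, whereas you derive the inequality $\ed \Omega^{i}(A\modcat) \geqslant n-i$ directly.
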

\begin{proof}
As $A$ is $n$-syzygy finite, $\ed \Omega^{n}(A\modcat)=0$.
By Lemma \ref{lem-syzygy-dimension},
$\ed \Omega^{i}(A\modcat)\leqslant n-i$. Then $\ed \Omega^{i}(A\modcat)=n-i$. Otherwise, suppose $\ed \Omega^{i}(A\modcat)< n-i$. By Lemma \ref{lem-syzygy-dimension},
$$\ed(A)=\ed A\modcat \leqslant \ed \Omega^{i}(A\modcat)+i<(n-i)+i=n,$$
a contradiction to the assumption $\ed(A)=n$.
\end{proof}
\begin{Koro}\label{extension-global}
Let $A$ be an Artin algebra. Suppose that the global dimension $\gldim (A)$ of $A$ is finite and  $\ed(A)=\gd(A)$.
Then $\ed \Omega^{i}(A\modcat)=\gldim (A)-i$
for each $0\leqslant i \leqslant \gldim (A).$
\end{Koro}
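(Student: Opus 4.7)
The plan is to deduce this corollary directly from the preceding theorem by verifying that the finite global dimension hypothesis forces $A$ to be $n$-syzygy-finite with $n=\gldim(A)$. Set $n:=\gldim(A)$. By definition of global dimension, every finitely generated $A$-module $M$ satisfies $\pd(_AM)\leqslant n$, which means that in any projective resolution the $n$-th syzygy $\Omega^n(M)$ is projective. Using the description
\[
\Omega^{n}(A\modcat)=\{K\oplus P\mid K\cong\Omega^{n}(M)\text{ for some }M,\ P\in\pmodcat{A}\},
\]
recorded in the paper, this shows $\Omega^{n}(A\modcat)\subseteq\pmodcat{A}=\add({_A}A)$. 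Since $\add({_A}A)$ has, up to isomorphism, only finitely many indecomposable objects (the indecomposable projective $A$-modules), $\Omega^{n}(A\modcat)$ is representation-finite. Hence $A\modcat$ is $n$-syzygy-finite in the sense of Definition~\ref{def-syzygy-modcat}.

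With this verified, the hypothesis $\ed(A)=\gd(A)=n$ is precisely the hypothesis of the preceding theorem. Applying that theorem yields
\[
\ed\,\Omega^{i}(A\modcat)=n-i=\gldim(A)-i
\]
for every $0\leqslant i\leqslant n$, which is the claim.

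There is essentially no obstacle here; the only thing worth double-checking is the passage from finite global dimension to $n$-syzygy-finiteness, which relies on the convention that $\Omega^{n}(A\modcat)$ is closed under direct summands with projective modules, so that the inclusion into $\pmodcat{A}$ indeed implies representation-finiteness of $\Omega^{n}(A\modcat)$. After that, the corollary is a one-line invocation of the theorem.
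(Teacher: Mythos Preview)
Your proof is correct and matches the paper's intended argument: the corollary is stated without proof precisely because it follows immediately from the preceding theorem once one observes that $\gldim(A)=n$ forces $\Omega^{n}(A\modcat)\subseteq\pmodcat{A}$, hence $A$ is $n$-syzygy-finite. Your verification of this implication is accurate and complete.
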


\begin{Bsp}{\rm
Let $A$ be the Beilinson algebra $kQ/I$ with quiver $Q$
$$\xymatrix{
&0 \ar@/_1pc/[r]_{x_{n}^{(1)}}\ar@/^1pc/[r]^{x_{0}^{(1)}}_{\vdots}
&1\ar@/_1pc/[r]_{x_{n}^{(2)}}\ar@/^1pc/[r]^{x_{0}^{(2)}}_{\vdots}
&2\ar@/_1pc/[r]_{x_{n}^{(3)}}\ar@/^1pc/[r]^{x_{0}^{(3)}}_{\vdots}
&3&\cdots &n-1\ar@/_1pc/[r]_{x_{n}^{(n)}}\ar@/^1pc/[r]^{x_{0}^{(n)}}_{\vdots}&n
}$$
and relations $I=(x_{i}^{(l)}x_{j}^{(l+1)}-x_{j}^{(l)}x_{i}^{(l+1)})$ for $i,j\in\{0,1,2,\cdots,n\},l\in\{1,2,3,\cdots,n-1\}$. By \cite[Example 3.4]{zh22}, we know that $\ed(A)=n=\text{\rm gldim }(A)$.
Then, by Corollary \ref{extension-global}, we have $\ed \;\Omega^{i}(A\modcat)=n-i$ for each $0\leqslant i \leqslant n.$
}
\end{Bsp}

\section{Derived equivalences}
In this section, we discuss the relationships of the extension dimensions of the $i$-th syzygy module categories associated with two derived equivalent algebras. We first introduce some notationas and basic facts related to derived euivalences, as detailed in reference \cite{hp17,hx10}.

\begin{Def}{\rm A derived equivalence $F:\Db{A}\ra \Db{B}$ is called \emph{nonnegative} if

$(1)$ $F(X)$ is isomorphic to a complex with zero homology in all negative degrees for all $X\in A\modcat$; and
$(2)$ $F(P)$ is isomorphic to a complex in $\Kb{\pmodcat{A}}$ with zero terms in all negative degrees for all $P\in\pmodcat{A}$.
}
\end{Def}
\begin{Lem}\label{lem-nonneg}
{\rm (\cite[Lemma 4.2]{hp17})} A derived equivalence $F:\Db{A}\ra \Db{B}$ is nonnegative if and only if $F(A)$ is isomorphic in $\Kb{\pmodcat{B}}$ to a complex with zero terms in all positive degrees.
In particular, $F[i]$ is nonnegative for sufficiently small $i$.
\end{Lem}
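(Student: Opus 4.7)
The plan is to establish the two implications of the equivalence and then deduce the `in particular' clause. The forward direction follows immediately from condition~(2) of the definition applied to the projective module ${_A}A$: if $F$ is nonnegative, then $F(A)$ is automatically isomorphic in $\Kb{\pmodcat{B}}$ to a complex with the prescribed vanishing of terms.

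For the converse, assume $F(A) \cong \bar{T}^{\bullet}$ in $\Kb{\pmodcat{B}}$ with the stated vanishing of terms. Applying Lemma \ref{der-lem} produces a bounded projective complex $T^{\bullet} \in \Kb{\pmodcat{A}}$ with dual support such that $F^{-1}(B) \cong T^{\bullet}$ in $\Db{A}$. By Rickard's theorem (\cite{r1989}), $T^{\bullet}$ is a tilting complex over $A$, and $F$ is naturally isomorphic to $\rHom_A(T^{\bullet}, -): \Db{A} \ra \Db{B}$. For any $X \in A\modcat$, since $T^{\bullet}$ is a bounded complex of projectives, $\rHom_A(T^{\bullet}, X)$ is represented by the ordinary $\Hom$-complex $\Hom^{\bullet}_A(T^{\bullet}, X)$, whose $i$-th term equals $\Hom_A(T^{-i}, X)$ and hence vanishes exactly when $T^{-i} = 0$. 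Thus the complex representing $F(X)$ is supported in precisely the degree range dictated by the support of $T^{\bullet}$. Matching this against the hypothesis yields simultaneously conditions (1) and (2) of nonnegativity: the cohomology of $F(X)$ is forced into the required range because the underlying complex itself is, and for projective $X = P$ the resulting complex lies in $\Kb{\pmodcat{B}}$ with terms only in that range.

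For the `in particular' assertion, $F(A)$ is a bounded complex in $\Kb{\pmodcat{B}}$, so $F[i](A) = F(A)[i]$ lies in a shifted bounded range; choosing $i$ sufficiently small shifts the support into the degree range demanded by the characterization just proved, whence $F[i]$ is nonnegative.

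The main obstacle lies in verifying condition~(1) of nonnegativity for arbitrary, not necessarily projective, modules $X$, since this is a condition on cohomology rather than on the terms of a representative complex. The key resolution is the $\rHom$ realization of $F$ via $T^{\bullet} = F^{-1}(B)$: because $T^{\bullet}$ consists of projectives, the derived functor is computed termwise by ordinary $\Hom$, so the support of this $\Hom$-complex is controlled degreewise by the support of $T^{\bullet}$, which is in turn dictated by the hypothesis on $F(A)$ via Lemma \ref{der-lem}. This converts the cohomological condition into a transparent termwise degree bound and avoids a more delicate induction over projective resolutions.
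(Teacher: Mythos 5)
The paper itself gives no argument for this lemma (it is quoted from Hu--Pan), so your proposal has to stand on its own, and as written it has a genuine gap at its pivot. You claim that Rickard's theorem makes the given equivalence $F$ naturally isomorphic to $\rHom_A(\cpx{T},-)$ with $\cpx{T}=F^{-1}(B)$. Rickard's Morita theorem only asserts that \emph{some} equivalence is associated with the tilting complex $\cpx{T}$; it does not identify an arbitrarily prescribed $F$ with a standard functor of this form (whether every derived equivalence is standard is not known), and your entire termwise analysis of the Hom-complex rests on this unavailable identification. Moreover, even for a standard equivalence your verification of condition $(2)$ is unsupported: the terms $\Hom_A(T^{-i},P)$ carry no evident projective $B$-module structure (the $B$-action lives on the dg endomorphism complex, not on individual terms). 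Both points are repairable without the $\rHom$ realization. For condition $(1)$: for a module $X$ and $i<0$, $H^i(F(X))\cong\Hom_{\Db{B}}(B,F(X)[i])\cong\Hom_{\Db{A}}(\cpx{T},X[i])\cong\Hom_{\Kb{A}}(\cpx{T},X[i])=0$, the last two steps because $\cpx{T}$ is a bounded complex of projectives concentrated in non-positive degrees while $X[i]$ is a stalk complex in degree $-i>0$. For condition $(2)$: since $P\in\add({_A}A)$, $F(P)$ is a direct summand of copies of $F(A)$ in $\Kb{\pmodcat{B}}$, and by Lemma \ref{radical} its radical representative is, as a complex, a direct summand of a radical complex concentrated in the same degrees as $F(A)$; this is where the projectivity and the degree bound for $F(P)$ actually come from.

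The second problem is that your degree bookkeeping is never pinned down (``the prescribed vanishing'', ``dual support'', ``matching against the hypothesis''), and here it is exactly the delicate point. Condition $(2)$ of the definition applied to ${_A}A$ gives $F(A)$ with zero terms in all \emph{negative} degrees, whereas the lemma as printed speaks of \emph{positive} degrees; only the negative-degree reading for $F(A)$ (equivalently, by Lemma \ref{der-lem}, the positive-degree vanishing for $F^{-1}(B)$) is compatible with the final clause that $F[i]$ is nonnegative for sufficiently \emph{small} $i$. Under the literal printed reading, the very computation you propose would place $F(X)$ in non-positive degrees, i.e.\ it would prove non-positivity rather than nonnegativity. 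A correct write-up must fix the orientation explicitly: assume $F(A)\cong\cpx{\bar{T}}$ with terms in degrees $0,\dots,n$, deduce from Lemma \ref{der-lem} that $\cpx{T}=F^{-1}(B)$ has terms in degrees $-n,\dots,0$, run the two arguments above, and then obtain the ``in particular'' clause by noting that $F[i](A)\cong F(A)[i]$ has zero terms in all negative degrees once $i$ is small enough, so the criterion applies to $F[i]$.
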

For every nonnegative derived equivalence $F$, Hu-Xi (see \cite[Section 3]{hx10}) construct a functor  $\overline{F}:\stmodcat{A}\ra \stmodcat{B}$, which is called the \emph{stable functor} of $F$.
This stable functor has the following properties.

\begin{Lem}\label{lem-non}{\rm (see \cite{hx10} or \cite[Section 4]{hp17})}
$(1)$ Let $i$ be a nonnegative integer. Then $i$-th syzygy functor $\Omega^{i}_{A}:\stmodcat{A}\ra \stmodcat{A}$ is a stable functor of the derived equivalence $[-i]:\Db{A}\ra \Db{A}$, that is, $\overline{[-i]}\cong \Omega^i_A$ as additive functors. Particularly, the stable functor of identity functor on $\Db{A}$ is isomorphic to the identity functor on $\stmodcat{A}$.

$(2)$ Let $F:\Db{A}\ra \Db{B}$ and $G:\Db{B}\ra \Db{C}$ be two nonnegative derived equivalences. Then the functors $\overline{G}\circ \overline{F}$ and $\overline{GF}$ are isomorphic.

$(3)$ Let $F:\Db{A}\ra \Db{B}$ be a nonnegative derived equivalence. Suppose that $$0 \lra X\lra Y \lra Z\lra 0$$ is an exact sequence in $A\modcat$. Then there is an exact sequence
$$0 \lra \overline{F}(X) \lra \overline{F}(Y)\oplus Q \lra \overline{F}(Z)\lra 0$$ in $B\modcat$ for some projective $B$-module $Q$.

$(4)$ Given two Artin algebras $A$ and $B$.
Let $F:D^{b}(A\modcat)\ra D^{b}(B\modcat)$
be a triangle functor.
Then $\overline{F}\circ \Omega_{A}\cong  \Omega_{B}\circ \overline{F}$. In general, $\overline{F}\circ \Omega^i_{A}\cong  \Omega^i_{B}\circ \overline{F}$ for $i\in\mathbb{N}$.
\end{Lem}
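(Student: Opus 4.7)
The plan is to unpack the Hu--Xi construction of the stable functor and verify each claim on the chosen representatives. Recall that for a nonnegative derived equivalence $F:\Db{A}\to\Db{B}$ and an $A$-module $X$, one picks a quasi-isomorphism $F(X)\cong \bar{X}^{\bullet}$ in $\Db{B}$ where $\bar{X}^{\bullet}$ has zero terms in all negative degrees and projective $B$-modules in all positive degrees, and declares $\overline{F}(X):=\bar{X}^{0}$, which is well-defined up to projective summands and thus an object in $\stmodcat{B}$. Each part below is essentially a direct check at the level of such representatives, supplemented by a shift-normalization argument whenever an intermediate functor fails to be nonnegative.

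For part (1), the functor $[-i]:\Db{A}\to\Db{A}$ is nonnegative since $X[-i]$ sits in degree $i\geqslant 0$ for any module $X$ and $P[-i]\in\Kb{\pmodcat{A}}$ has all terms in nonnegative degree for any projective $P$. Fixing a projective resolution $\cdots\to P_{i-1}\to\cdots\to P_{0}\to X\to 0$, truncation produces a quasi-isomorphism between $X[-i]$ and the bounded complex $0\to \Omega^{i}(X)\to P_{i-1}\to\cdots\to P_{0}\to 0$ concentrated in degrees $0$ through $i$, whose degree-zero term is $\Omega^{i}(X)$. This identifies $\overline{[-i]}(X)\cong\Omega^{i}(X)$ in $\stmodcat{A}$, and functoriality follows from the standard comparison theorem for projective resolutions. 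For part (2), given $X\in A\modcat$, apply $G$ termwise to a chosen representative $\bar{X}^{\bullet}$ of $F(X)$: by nonnegativity of $G$, each $G(\bar{X}^{i})$ with $i\geqslant 1$ is representable by a complex of projective $C$-modules concentrated in nonnegative degrees, and the total complex of the resulting bicomplex represents $GF(X)$ with degree-zero part equal, modulo projectives, to $\overline{G}(\overline{F}(X))$. If $GF$ itself is not nonnegative, replace it by $GF[-j]$ for $j\gg 0$ and transport the isomorphism back using part (1).

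For part (3), apply $F$ to the distinguished triangle $X\to Y\to Z\to X[1]$ in $\Db{A}$ and choose nonnegatively graded representatives $\bar{X}^{\bullet},\bar{Y}^{\bullet},\bar{Z}^{\bullet}$ with projective positive-degree terms. The long exact cohomology sequence, combined with projectivity of the positive-degree terms, collapses the triangle in degree zero to a short exact sequence $0\to \overline{F}(X)\to \overline{F}(Y)\oplus Q\to \overline{F}(Z)\to 0$, in which the projective summand $Q$ absorbs the split contributions arising from the mapping-cone representative. Part (4) then follows from (1) and (2): since $F$ commutes with shifts,
$$\overline{F}\circ\Omega^{i}_{A}\cong\overline{F}\circ\overline{[-i]}\cong\overline{F\circ[-i]}\cong\overline{[-i]\circ F}\cong\overline{[-i]}\circ\overline{F}\cong\Omega^{i}_{B}\circ\overline{F},$$
where the composition identity from (2) is invoked after any necessary shift-normalization. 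The main obstacle I anticipate is part (3): tracing the projective summand $Q$ with enough precision to obtain a genuine short exact sequence in $B\modcat$, not merely in the stable category, requires careful bookkeeping with the mapping cone and the chosen representatives of the triangle.
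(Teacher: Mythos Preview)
The paper does not prove this lemma at all; it is quoted verbatim from \cite{hx10} and \cite[Section~4]{hp17}, so there is no ``paper's approach'' to compare against. Your sketch is therefore an independent reconstruction of the cited arguments, and for parts (1), (2) and (4) it is essentially what Hu--Xi and Hu--Pan do (in particular, the composite of two nonnegative derived equivalences is again nonnegative, so your shift-normalization detour in (2) is never actually needed).

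Your own worry about part (3) is justified: the passage ``the long exact cohomology sequence \ldots\ collapses the triangle in degree zero to a short exact sequence'' is not yet a proof. The issue is that merely choosing representatives $\bar{X}^{\bullet},\bar{Y}^{\bullet},\bar{Z}^{\bullet}$ for $F(X),F(Y),F(Z)$ gives you a triangle in $\Db{B}$, and the cohomology long exact sequence only tells you about $H^{0}$, which does \emph{not} see projective summands; so you cannot conclude that the resulting degree-zero sequence is exact on the nose in $B\modcat$ rather than only up to projectives. In \cite{hx10} the argument is more structural: for every $A$-module $X$ one fixes a distinguished triangle $\bar{Q}_{X}^{\bullet}\to \overline{F}(X)\to F(X)\to \bar{Q}_{X}^{\bullet}[1]$ with $\bar{Q}_{X}^{\bullet}$ a bounded complex of projectives in strictly positive degrees, and then one builds a $3\times 3$ (octahedral-type) diagram from the triangle $F(X)\to F(Y)\to F(Z)\to F(X)[1]$ and these auxiliary triangles, finally reading off a genuine short exact sequence of modules from one row because the outer terms are complexes of projectives with controlled degrees. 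If you want to include a proof rather than just a citation, that is the bookkeeping you would need to supply; otherwise, citing \cite[Lemma~3.2]{hx10} and \cite[Proposition~4.6]{hp17} suffices, as the paper does.

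One further remark on (4): as stated in the paper, $F$ is only assumed to be ``a triangle functor'', but the stable functor $\overline{F}$ is defined only for nonnegative $F$, so the hypothesis is implicitly that $F$ is a nonnegative derived equivalence (this is how the lemma is used in Theorem~\ref{der-thm}). Your chain of isomorphisms for (4) tacitly assumes this; it is correct under that reading.
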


\begin{Theo}\label{der-thm} Let $F:\Db{A}\lraf{\sim} \Db{B}$ be a derived equivalence between Artin algebras. Then

$(1)$ $\ed \Omega^{i}(A\modcat)=\ed\Omega^{i}(B\modcat)$ with $i$ large enough.

$(2)$ $\ed \Omega^{\infty}(A\modcat)=\ed\Omega^{\infty}(B\modcat).$
\end{Theo}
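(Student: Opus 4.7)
The plan is to reduce to the case of nonnegative derived equivalences and to transport extension bounds via the associated stable functors. By Lemma \ref{lem-nonneg} we may choose $n_1,n_2\geqslant 0$ so that $G:=F[-n_1]$ and $H:=F^{-1}[-n_2]$ are both nonnegative derived equivalences; put $k:=n_1+n_2$. Since $H\circ G=[-k]$ on $\Db{A}$ and $G\circ H=[-k]$ on $\Db{B}$, Lemma \ref{lem-non}(1) and (2) give
\[
\overline{H}\circ\overline{G}\cong \Omega^{k}_A, \qquad \overline{G}\circ\overline{H}\cong \Omega^{k}_B,
\]
so $\overline{G}$ and $\overline{H}$ behave like inverses up to a $k$-fold syzygy.

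The first key step is an essential surjectivity statement: for each $i\geqslant k$ and each $N\in\Omega^{i}(B\modcat)$ there exists $M\in\Omega^{i-k}(A\modcat)$ with $\overline{G}(M)\cong N$ in $\stmodcat{B}$. Indeed, by the characterization of $\Omega^{i}(B\modcat)$ recalled in Section 2, write $N\cong\Omega^{i}_B(N')$ stably and set $N'':=\Omega^{i-k}_B(N')$, so that $N\cong\Omega^{k}_B(N'')$ in $\stmodcat{B}$. Taking $M:=\overline{H}(N'')$ and using Lemma \ref{lem-non}(4), $M\cong \Omega^{i-k}_A(\overline{H}(N'))$ in $\stmodcat{A}$, which, after adjoining a suitable projective summand, lies in $\Omega^{i-k}(A\modcat)$; moreover $\overline{G}(M)\cong \overline{G}\,\overline{H}(N'')\cong\Omega^{k}_B(N'')\cong N$.

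The second key step is the transfer of extension bounds. By Lemma \ref{lem-non}(3), $\overline{G}$ sends every short exact sequence in $A\modcat$ to a short exact sequence in $B\modcat$ after adding a projective summand to the middle term. Inducting on the extension length and using Corollary \ref{sum-subseteq}, one shows: if $\Omega^{i-k}(A\modcat)\subseteq [T]_{d+1}$ for some $T\in A\modcat$, then $\Omega^{i}(B\modcat)\subseteq [\overline{G}(T)\oplus B]_{d+1}$, whence $\ed\,\Omega^{i}(B\modcat)\leqslant \ed\,\Omega^{i-k}(A\modcat)$. By Proposition \ref{syzygy-stablity} choose $i_0$ so that $\ed\,\Omega^{j}(A\modcat)$ and $\ed\,\Omega^{j}(B\modcat)$ are both constant for $j\geqslant i_0$; then for $i\geqslant i_0+k$ we obtain $\ed\,\Omega^{i}(B\modcat)\leqslant \ed\,\Omega^{i-k}(A\modcat)=\ed\,\Omega^{i}(A\modcat)$, and interchanging the roles of $G$ and $H$ gives the reverse inequality, proving (1). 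For (2) the same argument applies with no index shift: by Lemma \ref{subseteq-modcat}(3), every $M\in\Omega^{\infty}(B\modcat)$ can be written (stably) as $\Omega^{k}_B(M'')$ with $M''\in\Omega^{\infty}(B\modcat)$, so $M\cong\overline{G}(\overline{H}(M''))$ with $\overline{H}(M'')\in\Omega^{\infty}(A\modcat)$, and the same extension-bound transfer yields $\ed\,\Omega^{\infty}(A\modcat)=\ed\,\Omega^{\infty}(B\modcat)$.

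The main obstacle is the bookkeeping around projective summands: Lemmas \ref{lem-non}(3) and (4) only give isomorphisms modulo projectives, so the inductive transfer of $[T]_{d+1}$ must systematically absorb projective $B$-modules by adjoining $B$ itself to the test object $\overline{G}(T)$. A careful application of Corollary \ref{sum-subseteq} together with the characterization $\Omega^{n}(B\modcat)=\{K\oplus P\mid K\cong\Omega^{n}(M),\ P\in\pmodcat{B}\}$ recalled in Section 2 is what makes the argument go through cleanly.
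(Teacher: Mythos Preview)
Your argument is correct and follows essentially the same route as the paper's: both use the stable functors of nonnegative derived equivalences together with Lemma~\ref{lem-non}(1)--(4), the identity $\overline{H}\circ\overline{G}\cong\Omega^{k}$ (the paper writes this as $\overline{G[-p]}\circ\overline{F}\cong\Omega^{p}$), and the stabilization from Proposition~\ref{syzygy-stablity}. The paper normalises so that $F$ itself is nonnegative and only shifts the quasi-inverse, whereas you shift both; this is a cosmetic difference, and your ``essential surjectivity'' phrasing is a clean way to organise the same computation.

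There is one point in part~(2) that needs an extra sentence. You assert that $\overline{H}(M'')\in\Omega^{\infty}(A\modcat)$ whenever $M''\in\Omega^{\infty}(B\modcat)$, appealing to ``the same argument'' as in~(1). But in~(1) you obtained $\overline{H}(N'')\in\Omega^{i-k}(A\modcat)$ via Lemma~\ref{lem-non}(4), and that route only yields $\overline{H}(M'')\in\bigcap_{j}\Omega^{j}(A\modcat)$, which for Artin algebras is in general strictly larger than $\Omega^{\infty}(A\modcat)$. The paper handles this by applying Lemma~\ref{lem-non}(3) to each short exact sequence $0\to K^{j}\to P^{j}\to K^{j+1}\to 0$ of a projective coresolution of $M''$; since $\overline{H}$ kills projectives (in $\stmodcat{B}$) and Lemma~\ref{lem-non}(3) adds only a projective to the middle term, one obtains an honest projective coresolution of $\overline{H}(M'')$ in $A\modcat$. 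You should indicate this step explicitly; once it is in place your proof of~(2) goes through exactly as written.
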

\begin{proof}
Set $p:=\ell(F(A))-1$. Let $\cpx{T}$ and $\cpx{\bar{T}}$ be the radical tilting complexes associated to $F$ and the quasi-inverse $G$ of $F$, respectively. By applying the shift functor, we can assume that $\cpx{T}\in\Kb{\pmodcat{A}}$ is of the form
$$0\lra T^{-p}\lra\cdots\lra T^{-1}\lra T^0\lra 0.$$
By Lemma \ref{der-lem}, $\cpx{\bar{T}}\in\Kb{\pmodcat{B}}$ is of the form
$$0\lra \bar{T}^0\lra \bar{T}^1\lra\cdots\lra\bar{T}^p\lra 0.$$
By Lemma \ref{lem-nonneg}, we know that $F$ and $G[-p]$ are nonnegative.

$(1)$ By Proposition \ref{syzygy-stablity}, we can take two positive integer $m_{1}$ and $m_{2}$, such that
$$\ed \Omega^{m_{1}}(A\modcat)=\ed \Omega^{m_{1}+i}(A\modcat)=n_{1}$$ and $$\ed \Omega^{m_{2}}(B\modcat)=\ed \Omega^{m_{2}+i}(B\modcat)=n_{2}$$
for each $i\geqslant 1.$

We set $\Omega_{B}^{m_{2}}(B\modcat)\subseteq [T]_{n_{2}+1}$. For $\Omega_{A}^{m_{1}}(X)\in\Omega_{A}^{m_{1}}(A\modcat) $ with
$X\in A\modcat $, we have $\overline{F}(\Omega^{m_{1}}_{A}(X))\in B\modcat $, and we have the following
 short exact sequences in $B\modcat $
\begin{equation*}
\begin{cases}
\xymatrix@C=1.5em@R=0.1em{
0\ar[r]& X_{1}\ar[r] & \Omega^{m_{2}}_{B}(\overline{F}(\Omega^{m_{1}}_{A}(X)))\oplus X'\ar[r]& Y_{1}\ar[r]&0,{\rm \;\;where \;\;} X_{1}\in[T]_{1}{\rm \;\;and\;\; } Y_{1}\in[T]_{n_{2}},\\
0\ar[r]& X_{2}\ar[r] & Y_{1}\oplus Y_{1}'\ar[r]& Y_{2}\ar[r]&0,{\rm \;\;where \;\;} X_{2}\in[T]_{1}{\rm \;\;and\;\; } Y_{2}\in[T]_{n_{2}-1},\\
&&\vdots&&\\
0\ar[r]& X_{n_{2}}\ar[r] & Y_{n_{2}-1}\oplus Y_{n_{2}-1}'\ar[r]& Y_{n_{2}}\ar[r]&0,{\rm \;\;where \;\;} X_{n_{2}}\in[T]_{1}{\rm \;\;and\;\; } Y_{n_{2}}\in[T]_{1}.
}
\end{cases}
\end{equation*}
Applying the stable functor $\overline{G[-p]}$ of $G[-p]$ to the above short exact sequences and by Lemma \ref{lem-non}(3), we can get the following short exact sequences
\begin{equation*}
\begin{cases}
\xymatrix@C=1.5em@R=0.1em{
0\ar[r]& \overline{G[-p]}(X_{1})\ar[r] & \overline{G[-p]}(\Omega^{m_{2}}_{B}(\overline{F}(\Omega^{m_{1}}_{A}(X))))\oplus \overline{G[-p]}(X')\oplus P_{1}\ar[r]& \overline{G[-p]}(Y_{1})\ar[r]&0,\\
0\ar[r]& \overline{G[-p]}(X_{2})\ar[r] & \overline{G[-p]}(Y_{1})\oplus \overline{G[-p]}(Y_{1}')\oplus P_{2}\ar[r]& \overline{G[-p]}(Y_{2})\ar[r]&0,\\
&&\vdots&&\\
0\ar[r]& \overline{G[-p]}(X_{n_{2}})\ar[r] & Y_{n_{2}-1}\oplus \overline{G[-p]}(Y_{n_{2}-1}')\oplus P_{n_{2}}\ar[r]& \overline{G[-p]}(Y_{n_{2}})\ar[r]&0,
}
\end{cases}
\end{equation*}
where $\overline{G[-p]}(X_{i})\in [\overline{G[-p]}(T)]_{1}$ and $\overline{G[-p]}(Y_{i})\in [\overline{G[-p]}(T)]_{n_{2}+1-i}$
and $P_{i}\in \pmodcat{B}$
for each $1\leqslant i \leqslant n_{2}.$
In particular, $$\overline{G[-p]}\Omega_{B}^{m_{2}}(\overline{F}(\Omega_{A}^{m_{1}}(X)))\in [\overline{G[-p](T)}]_{n_{2}+1}.$$
On the other hand, we have the following isomorphisms in $\stmodcat{A}$
\begin{align*}
	\overline{G[-p]}\Omega_{B}^{m_{2}}(\overline{F}(\Omega_{A}^{m_{1}}(X)))
	&\cong(\overline{G[-p]}\circ \overline{[-m_{2}]}\circ \overline{F})(\Omega_{A}^{m_{1}}(X))\quad \mbox{(by Lemma \ref{lem-non}(1))}\\
	&\cong \overline{G[-p]\circ [-m_{2}]} \circ \overline{F}(\Omega_{A}^{m_{1}}(X))\quad \mbox{(by Lemma \ref{lem-non}(2))}\\
	&\cong \overline{G[-p-m_{2}]} \circ \overline{F}(\Omega_{A}^{m_{1}}(X))\\
	&\cong \overline{G[-p-m_{2}]\circ F}(\Omega_{A}^{m_{1}}(X))\quad \mbox{(by Lemma \ref{lem-non}(2))}\\
	&\cong \overline{G\circ F\circ[-p-m_{2}]}(\Omega_{A}^{m_{1}}(X))\\
	&\cong \overline{{\rm Id}_{\Db{A}}[-p-m_{2}]}(\Omega_{A}^{m_{1}}(X))
	\quad \mbox{(by Lemma \ref{lem-non}(1))}\\
	&\cong \overline{[-p-m_{2}]}(\Omega_{A}^{m_{1}}(X))
	\\
	&\cong \Omega_{A}^{p+m_{2}}(\Omega_{A}^{m_{1}}(X))		\quad \mbox{(by Lemma \ref{lem-non}(1))}
	\\
	&\cong \Omega_{A}^{m_{1}+p+m_{2}}(X). 
\end{align*}
By \cite[Theorem 2.2]{Heller60}, there are projective $A$-modules $Q$ and $Q'$ such that $$\overline{G[-p]}\Omega_{B}^{m_{2}}(\overline{F}(\Omega_{A}^{m_{1}}(X)))\oplus Q\cong \Omega_{A}^{m_{1}+p+m_{2}}(X)\oplus Q'$$ as $A$-modules.
Moreover, we  can get
$$\Omega_{A}^{m_{1}+p+m_{2}}(X)\in [\overline{G[-p](T)}\oplus A]_{n_{2}+1}.$$
Note that $$\Omega^{m_{1}+p+m_{2}}(A\modcat)=\{K\oplus P \in A\modcat \mid K\cong\Omega^{m_{1}+p+m_{2}}(M) \text{ for some }A\text{-module } M \text{ and } P\in \pmodcat{A}\}.$$ Then
$$\Omega_{A}^{m_{1}+p+m_{2}}(A\modcat)\subseteq [\overline{G[-p](T)}\oplus A]_{n_{2}+1}.$$
And by Definition \ref{def-subcat-extension-dim}, we have
$$\ed\Omega_{A}^{m_{1}}(A\modcat)=\ed \Omega_{A}^{m_{1}+p+m_{2}}(A\modcat)\leqslant n_{2}=\ed\Omega_{A}^{m_{2}}(B\modcat ).$$
That is, $\ed\Omega_{A}^{m_{1}}(A\modcat )\leqslant \ed\Omega_{B}^{m_{2}}(B\modcat).$
Similarly, we also have
$$\ed\Omega_{B}^{m_{2}}(B\modcat )\leqslant\ed\Omega_{A}^{m_{1}}(A\modcat).$$
And then we get $\ed\Omega_{A}^{m_{1}}(A\modcat)= \ed\Omega_{B}^{m_{2}}(B\modcat).$ By Proposition \ref{syzygy-stablity}, for $i$ sufficiently large, we have $$\ed \Omega^{i}(A\modcat)=\ed\Omega^{i}(B\modcat).$$

$(2)$ Let $X\in \Omega^{\infty}(A\modcat)$. We claim that
$\overline{F}(X)\in \Omega^{\infty}(B\modcat)$. Indeed, it follows from $X\in \Omega^{\infty}(A\modcat)$ that the
following exact sequence
$$0\lra X\lra P^{0}\lraf{f^0} P^{1}\lraf{f^1} P^{2}\lraf{f^2}\cdots$$
with $P^{i}\in\pmodcat{A}$ for each $i\in \mathbb{N}$.
Let $K^i$ be the kernal of $f^i$. Then $K^0=X$ and we have the following short exact sequences
\begin{equation*}
\begin{cases}
\xymatrix@C=1.5em@R=0.1em{
0\ar[r]& X\ar[r] & P^{0}\ar[r]& K^{1}\ar[r]&0,\\
0\ar[r]& K^{1}\ar[r] & P^{1}\ar[r]& K^{2}\ar[r]&0,\\
0\ar[r]& K^{2}\ar[r] & P^{2}\ar[r]& K^{3}\ar[r]&0,\\
&  &\vdots & &.
}
\end{cases}
\end{equation*}
Applying the functor $\overline{F}$ to the above short exact sequences and by Lemma \ref{lem-non}(3), we can get the following short exact sequences
\begin{equation*}
\begin{cases}
\xymatrix@C=1.5em@R=0.1em{
0\ar[r]& \overline{F}(X)\ar[r]     & Q^{0}\ar[r]& \overline{F}(K^{1})\ar[r]&0,\\
0\ar[r]& \overline{F}(K^{1})\ar[r] & Q^{1}\ar[r]& \overline{F}(K^{2})\ar[r]&0,\\
0\ar[r]& \overline{F}(K^{2})\ar[r] & Q^{2}\ar[r]& \overline{F}(K^{3})\ar[r]&0,\\
&  &\vdots & &,
}
\end{cases}
\end{equation*}
where $_BQ^{i}$ is projective for each $i\in\mathbb{N}$.
And we get the following exact sequence
$$\xymatrix@C=1.5em@R=0.1em{
0\ar[r]&\overline{F}(X)\ar[r]
&Q^{0}\ar[r]&
Q^{1}\ar[r]&Q^{2}\ar[r]& \cdots.
}$$
Then  $\overline{F}(X)\in \Omega^{\infty}(B\modcat)$.

Now, let $\ed \Omega^{\infty}(B\modcat)=r_{2}$ and $\Omega^{\infty}(B\modcat)\subseteq [L]_{r_{2}+1}$
for some $L\in B\modcat$. It follows from $\overline{F}(X)\in \Omega^{\infty}(B\modcat)\subseteq [L]_{r_{2}+1}$ that there are the following exact sequences in $B\modcat$.
\begin{equation*}
\begin{cases}
\xymatrix@C=1.5em@R=0.1em{
0\ar[r]& L_{1}\ar[r] & \overline{F}(X)\oplus Z'_0\ar[r]& Z_{1}\ar[r]&0,\\
0\ar[r]& L_{2}\ar[r] & Z_{1}\oplus Z_{1}'\ar[r]& Z_{2}\ar[r]&0,\\
&&\vdots&&\\
0\ar[r]& L_{r_{2}}\ar[r] & Z_{r_{2}-1}\oplus Z_{r_{2}-1}'\ar[r]& Z_{r_{2}}\ar[r]&0.
}
\end{cases}
\end{equation*}
Here $L_i\in [L]_1$ and $Z_i\in [L]_{r_2-i+1}$ for $1\leqslant i\leqslant r_2$.
Applying the stable functor $\overline{G[-p]}$ of $G[-p]$ to the above short exact sequences and by Lemma \ref{lem-non}(3), we can get the following short exact sequences
\begin{equation*}
\begin{cases}
\xymatrix@C=1.5em@R=0.1em{
0\ar[r]& \overline{G[-p]}(L_{1})\ar[r] & \overline{G[-p]}(\overline{F}(X))\oplus \overline{G[-p]}(Z'_0)\oplus P'_1\ar[r]& \overline{G[-p]}(Z_{1})\ar[r]&0,\\
0\ar[r]& \overline{G[-p]}(L_{2})\ar[r] & \overline{G[-p]}(Z_{1})\oplus \overline{G[-p]}(Z_{1}')\oplus P'_2\ar[r]& \overline{G[-p]}(Z_{2})\ar[r]&0,\\
&&\vdots&&\\
0\ar[r]& \overline{G[-p]}(L_{r_{2}})\ar[r] & \overline{G[-p]}(Z_{r_{2}-1})\oplus \overline{G[-p]}(Z_{r_{2}-1}')\oplus P'_{r_2}\ar[r]& \overline{G[-p]}(Z_{r_{2}})\ar[r]&0,
}
\end{cases}
\end{equation*}
where $P'_{i}\in \pmodcat{A}$ and $\overline{G[-p]}(L_i)\in [\overline{G[-p]}(L)]_1$ and $\overline{G[-p]}(Z_i)\in [\overline{G[-p]}(L)]_{r_2-i+1}$ for $1\leqslant i\leqslant r_2$.
Thus $$\overline{G[-p]}(\overline{F}(X))\in [\overline{G[-p]}(L)]_{r_2+1}.$$
On the other hand, we have the following isomorphisms in $\stmodcat{A}$.
\begin{align*}
\overline{G[-p]}(\overline{F}(X))
&\cong \overline{G}(\overline{[-p]}\overline{F})(X)
\quad \mbox{(by Lemma \ref{lem-non}(2))}\\
&\cong \overline{G}(\Omega_{B}^{p}\overline{F})(X)
\quad \mbox{(by Lemma \ref{lem-non}(1))}\\
&\cong \overline{G}(\overline{F}\Omega_{A}^{p})(X)
\quad \mbox{(by Lemma \ref{lem-non}(4))}\\
&\cong \overline{GF}\Omega_{A}^{p}(X)
\quad \mbox{(by Lemma \ref{lem-non}(2))}\\
&\cong \overline{{\rm Id}_{\Db{A}}}\Omega_{A}^{p}(X)\\
&\cong \Omega_{A}^{p}(X)
\quad \mbox{(by Lemma \ref{lem-non}(1))}
\end{align*}
Thus $\Omega_{A}^{p}(X)\in [\overline{G[-p]}(L)\oplus A]_{r_2+1}$ and $\Omega^{p}_{A}(\Omega^{\infty}(A\modcat))\subseteq [\overline{G[-p]}(L)\oplus A]_{r_2+1}$.
By Lemma \ref{subseteq-modcat}(3),  $$\Omega^{p}_{A}(\Omega^{\infty}(A\modcat))=\Omega^{\infty}(A\modcat)$$
Then $\Omega^{\infty}(A\modcat)\subseteq [\overline{G[-p]}(L)\oplus A]_{r_2+1}.$
Thus $$\ed \Omega^{\infty}(A\modcat)\leqslant r_{2}=\ed \Omega^{\infty}(B\modcat).$$
Similarly, we also have
$\ed \Omega^{\infty}(B\modcat)\leqslant \ed \Omega^{\infty}(A\modcat)$.
And then $$\ed \Omega^{\infty}(A\modcat)=\ed \Omega^{\infty}(B\modcat).$$
This finishes the proof.
\end{proof}

As an immediate consequence of Theorem \ref{der-thm}, we have
\begin{Koro}
Let $A$ be an Artin algebra, $T$ be a tilting $A$-module and $B=\End_{A}(T)$. Then we have

$(1)$ $\ed\Omega_{A}^{m_{1}}(A\modcat)= \ed\Omega_{A}^{m_{2}}(B\modcat)$ for some integer $m_{1},m_{2}\in\mathbb{N}.$

$(2)$ $\ed\Omega_{A}^{\infty}(A\modcat)= \ed\Omega_{A}^{\infty}(B\modcat)$.
\end{Koro}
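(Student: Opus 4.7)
The plan is to reduce this corollary to Theorem~\ref{der-thm} by exhibiting a derived equivalence between $A$ and $B=\End_A(T)$ induced by the tilting module $T$. Recall from the preliminaries (the paragraph following Lemma~\ref{der-lem}) that if $T$ is a tilting $A$-module with projective resolution $P^{\bullet}(T)\colon 0\to P_n\to P_{n-1}\to\cdots\to P_0\to 0$, then $P^{\bullet}(T)$ is a tilting complex over $A$ and $\End_A(T)\cong \End_{\Db{A}}(P^{\bullet}(T))$ as algebras. By Rickard's theorem, this isomorphism gives rise to a triangle equivalence $F\colon \Db{A}\xrightarrow{\sim}\Db{B}$.

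With the derived equivalence $F$ in hand, part~(1) follows immediately from Theorem~\ref{der-thm}(1): there exist integers $m_1,m_2\in\mathbb{N}$ (namely any pair of sufficiently large integers, both $\geqslant$ the threshold provided by Proposition~\ref{syzygy-stablity} applied to $A$ and to $B$ together with the shift $p=\ell(F(A))-1$ used in the proof of Theorem~\ref{der-thm}) such that $\ed\,\Omega_A^{m_1}(A\modcat)=\ed\,\Omega_B^{m_2}(B\modcat)$. Part~(2) is then a direct invocation of Theorem~\ref{der-thm}(2), which gives $\ed\,\Omega^{\infty}(A\modcat)=\ed\,\Omega^{\infty}(B\modcat)$.

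Since both parts are essentially specializations of Theorem~\ref{der-thm}, there is no real obstacle; the only thing that requires a line of justification is the passage from ``tilting module'' to ``derived equivalence'', which has already been recorded in the preliminaries. Thus the proof will consist of one sentence invoking Rickard's theorem to produce $F$, followed by two sentences applying Theorem~\ref{der-thm}(1) and Theorem~\ref{der-thm}(2), respectively. No further computation is needed.
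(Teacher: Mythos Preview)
Your proposal is correct and follows essentially the same route as the paper's own proof: take the projective resolution of $T$ to obtain a tilting complex with endomorphism ring $B$, deduce a derived equivalence $\Db{A}\simeq\Db{B}$, and then invoke Theorem~\ref{der-thm}. The paper's argument is just the terse version of what you wrote.
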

\begin{proof}
Let $p:=\pd(_AT).$ We have the following  minimal projective resolution of $_AT$
$$0\lra  Q_{p}\lra Q_{n-1}\lra \cdots\lra Q_{0}\lra T\lra 0.$$ Moreover, the following complex
$$Q^{\bullet}(T):  0\lra  Q_{p}\lra Q_{n-1}\lra \cdots\lra Q_{0}\lra 0 $$
is a tilting complex in $\Db{A}$ and $B\cong \End_{\Db{A}}(Q^{\bullet}(T))$. Thus, by Theorem \ref{der-thm}, we get the result.
\end{proof}

\begin{Koro}\label{cor-syzygy}
{\rm (\cite{hp17,Wei2018Derived})}
If $A$ and $B$ are derived-equivalent, then $A$ is syzygy-finite if and only if so $B$ does.
\end{Koro}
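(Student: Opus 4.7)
The plan is to reduce the corollary to Theorem~\ref{der-thm}(1) by translating ``syzygy-finite'' into a statement about extension dimensions being zero. First I would observe that, for a subcategory $\mathcal{C}$ of $A\modcat$ closed under direct summands, $\mathcal{C}$ is representation-finite if and only if $\ed\mathcal{C}=0$: indeed $\ed\mathcal{C}=0$ means $\mathcal{C}\subseteq [T]_{1}=\add(T)$ for some $T\in A\modcat$, and $\add(T)$ has only finitely many non-isomorphic indecomposable objects, while conversely if $\mathcal{C}$ has finitely many indecomposables we may take $T$ to be their direct sum. Since each $\Omega^{n}(A\modcat)$ is closed under direct summands by definition, this gives the equivalence
\[
A\text{ is syzygy-finite}\iff \ed\Omega^{n}(A\modcat)=0\text{ for some } n\in\mathbb{N}.
\]

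Next, combining Lemma~\ref{subseteq-modcat}(1) with Lemma~\ref{subcat-dim}, the sequence $\{\ed\Omega^{i}(A\modcat)\}_{i\geqslant 0}$ is non-increasing in $i$, so if $\ed\Omega^{n}(A\modcat)=0$ for some $n$, then $\ed\Omega^{m}(A\modcat)=0$ for all $m\geqslant n$. Hence $A$ is syzygy-finite if and only if $\ed\Omega^{i}(A\modcat)=0$ for all sufficiently large $i$; the analogous equivalence of course holds for $B$.

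Now I would invoke Theorem~\ref{der-thm}(1): since $F:\Db{A}\to\Db{B}$ is a derived equivalence, there exists some threshold $N\in\mathbb{N}$ such that
\[
\ed\Omega^{i}(A\modcat)=\ed\Omega^{i}(B\modcat)\quad\text{for all } i\geqslant N.
\]
Combining this with the reformulation of the previous paragraph, we obtain
\[
A\text{ is syzygy-finite}\iff \ed\Omega^{i}(A\modcat)=0\text{ for all large }i\iff \ed\Omega^{i}(B\modcat)=0\text{ for all large }i\iff B\text{ is syzygy-finite},
\]
which is exactly the desired conclusion. Since the heavy lifting was already done in Theorem~\ref{der-thm}, there is no genuine obstacle here; the only point requiring care is the identification of ``representation-finite'' with ``$\ed=0$'' for a subcategory closed under summands, and the use of monotonicity to upgrade ``some $n$'' to ``all sufficiently large $i$'' so as to land inside the range where Theorem~\ref{der-thm}(1) applies.
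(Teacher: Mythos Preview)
Your proposal is correct and follows essentially the same approach as the paper's proof: both translate ``syzygy-finite'' into the vanishing of $\ed\Omega^{n}(-\modcat)$ and then apply Theorem~\ref{der-thm}(1). The paper's version is more terse and leaves the monotonicity step (passing from ``some $n$'' to ``all sufficiently large $i$'' so as to fall in the range of Theorem~\ref{der-thm}(1)) implicit, whereas you spell it out explicitly.
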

\begin{proof}
We have to show that if $A$ is syzygy-finite, then $B$ is syzygy-finite. Indeed, since  $A$ is syzygy-finite, there is some integer $n$ such that $\Omega^n(A\modcat)$ is representation-finite. In particular, $\ed \Omega^n(A\modcat)=0$. By Theorem \ref{der-thm}, $\ed \Omega^m(B\modcat)=0$ for some $m$. Thus $\Omega^m(B\modcat)$ is representation-finite and $B$ is syzygy-finite.
\end{proof}

To illustrate Theorem \ref{der-thm}, we give the following example.

\begin{Bsp}{\rm (\cite[Example 3.11]{zhang24})
Let $A$ be an algebra over a field $k$ given by the following quiver $Q_A$
$$\xymatrix{
&&3\ar[dl]_{\beta_1}\\
1&2\ar[l]|-{\alpha}&4\ar[l]|-{\beta_2}\\
&&5\ar[ul]^{\beta_3}
}$$
with relations $\alpha\beta_i=0,\;1\leqslant i\leqslant 3$. The Aulander-Reiten quiver of $A\modcat$ is as follows:
$$\xymatrix@C=1.5em@R=1.5em{
&
{\text{$\begin{smallmatrix}
2\\
1
\end{smallmatrix}$}}\ar[ddr]
&&&&&&\\
&&&
{\text{$\begin{smallmatrix}
3\\
2
\end{smallmatrix}$}}\ar[dr]
&&
{\text{$\begin{smallmatrix}
4&&5\\
&2&
\end{smallmatrix}$}}\ar[dr]\ar@{-->}[ll]
&&
{\text{$\begin{smallmatrix}
3
\end{smallmatrix}$}}\ar@{-->}[ll]
\\
{\text{$\begin{smallmatrix}
1
\end{smallmatrix}$}}\ar[uur]
&&
{\text{$\begin{smallmatrix}
2
\end{smallmatrix}$}}\ar[dr]\ar[ddr]\ar[ur]\ar@{-->}[ll]
&&
{\text{
$\begin{smallmatrix}
3&&4&&5 \\
&2&&2&
\end{smallmatrix}$}}\ar[dr]\ar[ddr]\ar[ur]\ar@{-->}[ll]
&&
{\text{
$\begin{smallmatrix}
3&4&5 \\
&2&
\end{smallmatrix}$}}\ar[dr]\ar[ddr]\ar[ur]\ar@{-->}[ll]
&
\\
&&&
{\text{
$\begin{smallmatrix}
4 \\
2
\end{smallmatrix}$}}\ar[ur]
&&
{\text{
$\begin{smallmatrix}
3&&5 \\
&2&
\end{smallmatrix}$}}\ar[ur]\ar@{-->}[ll]
&&
{\text{
$\begin{smallmatrix}
4
\end{smallmatrix}$}}\ar@{-->}[ll]
\\
&&&
{\text{
$\begin{smallmatrix}
5\\
2
\end{smallmatrix}$}}\ar[uur]
&&
{\text{
$\begin{smallmatrix}
3&&4 \\
&2&
\end{smallmatrix}$}}\ar[uur]\ar@{-->}[ll]
&&
{\text{
$\begin{smallmatrix}
5
\end{smallmatrix}$}}\ar@{-->}[ll]
}$$
Thus $A$ is representation-finite and $\ed( A)=0$ by Lemma \ref{ext-lem}(1).  Then $\ed \Omega^{i}(A\modcat)=0$ for each $i\geqslant 0$.
Let
$T:=
2\oplus \begin{matrix}2\\1\end{matrix}
\oplus \begin{matrix}3\\2\end{matrix}
\oplus \begin{matrix}4\\2\end{matrix}
\oplus \begin{matrix}5\\2\end{matrix}$.
By calculation, we obtain that $T$ is a tilting module with $\pd(_AT)=1$.
Let $P^{\bullet}$ be the projective resolution of $T$. Then $P^{\bullet}$ is a $2$-term tilting complex and $B:=\End_{\Db{A}}(P^{\bullet})$ is the path algebra given by the quiver $Q_B$
$$\xymatrix{
&&c\ar[dl]\\
b&a\ar[l]&d\ar[l]\\
&&e\ar[ul]
}$$
Since the underlying graph of $Q_B$ is Euclidean, we know that $B$ is a hereditary $k$-algebra of infinite representation type
and $\ed(B\modcat)=1$ by Lemmas \ref{ext-lem}(1) and \ref{ext-lem}(3). Due to $\gd(B)=1$, $\Omega(B\modcat)\subseteq\pmodcat{B}$ and $\ed \Omega(B\modcat)=0$. In particular, $\ed \Omega^i(B\modcat)=0$ for each $i\geqslant 1$.
Thus for each $m_1\geqslant 0$ and $m_2\geqslant 1$, $\ed \Omega^{m_{1}}(A\modcat)=\ed\Omega^{m_{2}}(B\modcat).$
}
\end{Bsp}

\begin{Bsp}{\rm(\cite[Example 4.25]{Xi18})
Let $A$ be a $k$-algebra given by the quiver with relations:
$$\begin{array}{ccc}
\xymatrix{
1\ar@<2.5pt>[r]^{\gamma}
&2\ar@<2.5pt>[l]^{\beta}
\ar@<2.5pt>[r]^{\delta} &3\ar@<2.5pt>[l]^{\alpha}\ar@(ru,rd)^{\epsilon}\\
&&
4\ar[u]_{\eta}
}
\\
\alpha\delta\alpha, \delta\gamma, \alpha\delta-\gamma\beta,\epsilon^n,\epsilon\delta,\alpha\epsilon,\alpha\eta.
\end{array}$$
We will show that $A$ is syzygy-finite. It follows from \cite[Example 4.25]{Xi18} that $A$ is derived equivalent to a $k$-algebra $B$ given by the quiver with relations:
$$\begin{array}{ccc}
\xymatrix{
2'\ar[r]^{\gamma'}
&3'\ar[dl]^{\alpha'}
\ar@(ru,rd)^{\epsilon'}\\
1'\ar[u]^{\beta'}
&4'\ar[u]_{\eta'}}\\
\alpha'\gamma'\beta'\alpha',\, \gamma'\beta'\alpha'\gamma',(\epsilon')^n,\epsilon'\gamma',\alpha'\epsilon',\alpha'\eta'.
\end{array}$$
Note that $B$ is a monomial algebra. By Zimmermann-Huisgen's result in \cite{zh92}, we know $B$ is $2$-syzygy finite. Then $A$ is syzygy-finite by Corollary \ref{cor-syzygy}.
}
\end{Bsp}

\section{Stable equivalences}
In this section, we shall prove that the extension dimension of the $i$-th syzygy module categories is invariant under stable equivalence for each $i\in \mathbb{N}$. We first recall some basic results about the stable equivalence of Artin algebras , as detailed in reference \cite{ars97,c21,Guo05}.

Let $A$ be an Artin algebra over a fixed commutative Artin ring $R$.
We denoted by $\stmodcat{A}$ the stable module category of $A$ modulo projective modules. The objects are the same as the objects of $A\modcat$, and for two modules $X, Y$ in $\stmodcat{A}$, their homomorphism set is $\underline{\Hom}_A(X,Y):=\Hom_A(X,Y)/\mathscr{P}(X,Y)$, where $\mathscr{P}(X,Y)$ is the subgroup of $\Hom_A(X,Y)$ consisting of the homomorphisms factorizing through a projective $A$-module. This category is usually called the \emph{stable module category} of $A$. Dually, We denoted by  $A\mbox{-}\overline{\mbox{mod}}$ the stable module category of $A$ modulo injective modules. Let $\tau_A$ be the Auslander-Reiten translation $\DTr$. Then $\tau_A:\stmodcat{A}\ra A\mbox{-}\overline{\mbox{mod}}$ be an equivalence as additive categories (see \cite[Chapler IV.1]{ars97}). Two algebras $A$ and $B$ are said to be \emph{stably equivalent} if the two stable categories $A\stmodcat$ and $B\stmodcat$ are equivalent as additive categories.

Next, suppose that $F: \stmodcat{A} \ra \stmodcat{B}$ is an stable equivalence. Then the following functor
$$F':=\tau_B\circ F\circ \tau_A^{-1}:A\mbox{-}\overline{\mbox{mod}}\lra B\mbox{-}\overline{\mbox{mod}}$$
is equivalent as additive categories. Moreover, there are one-to-one correspondences
$$F: A\modcat_{\mathscr{P}}\lra B\modcat_{\mathscr{P}}\quad \mbox{and}\quad
F': A\modcat_{\mathscr{I}}\lra  B\modcat_{\mathscr{I}},$$
where $A\modcat_{\mathscr{P}}$ (respectively, $A\modcat_{\mathscr{I}}$) stands for the full subcategory of $A\modcat$ consisting of modules without nonzero projective (respectively, injective) summands.
We also use $F$ (respectively, $F'$) to denote the induce map $A\modcat\ra B\modcat$ which takes projective modules (respectively, injective modules) to zero.

\begin{Def}{\rm (\cite[Definition 4.2]{c21})}{\rm
  An indecomposable $A\modcat$ module $S$ is called a node if it is neither projective nor
  injective, and there is an almost split sequence
  $0\to S\to P \to T \to 0$
  with $P$ a projective $A\modcat$ module.}
\end{Def}
\begin{Lem}{\rm (\cite[Chapter V, Theorem 3.3]{ars97})}
  A node module $S$ in $A\modcat$ must be simple.
\end{Lem}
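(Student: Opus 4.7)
The plan is to follow the classical structural argument for almost split sequences with projective middle term. Starting from the almost split sequence $0\to S \xrightarrow{f} P \xrightarrow{g} T \to 0$ provided by the definition of a node, I first want to locate $f(S)$ inside $P$. Decomposing $P = \bigoplus_{i} P_i$ into indecomposable projective summands, if $f(S)$ were not contained in $\rad(P)$, then $f(S) + \rad(P) = P$, and since $\rad(P) = \rad(A)\cdot P$ for the projective $P$, Nakayama's lemma would force $f(S) = P$, contradicting $T \neq 0$. Hence $f(S) \subseteq \rad(P)$.

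Next I would invoke the irreducibility of $f$ as a component of an almost split sequence. For any submodule $X$ of $P$ with $f(S) \subseteq X \subsetneq P$ (taking $P$ indecomposable for simplicity), factor $f$ as $S \to X \hookrightarrow P$. Since $X \subsetneq P$, the inclusion is not a split epimorphism, so irreducibility of $f$ forces $S \to X$ to be a split monomorphism, yielding $X = f(S) \oplus X'$ for some complement. Applied to $X = \rad(P)$, this exhibits $S$ as a direct summand of $\rad(P)$.

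The crux is to upgrade this to $f(S) \subseteq \soc(P)$. Suppose for contradiction that $S$ has a proper nonzero submodule $S_0 \subsetneq S$. The canonical projection $q\colon S \to S/S_0$ is not a split monomorphism, so the left almost split property of $f$ yields a factorization $q = h \circ f$ with $h\colon P \to S/S_0$. Combining $h$ with the decomposition $\rad(P) = f(S) \oplus P''$ obtained in the previous step and exploiting projectivity of $P$, one constructs an endomorphism of $S$ that contradicts either the indecomposability of $S$ or the irreducibility of $f$. Once $f(S) \subseteq \soc(P)$ is established, the indecomposability of $S$ forces $S$ to be a simple direct summand of $\soc(P)$, and hence simple.

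The main obstacle is this last step: extracting simplicity of $S$ from the left almost split property of $f$ requires careful bookkeeping with submodules of $S$, the action of $\rad(A)$, and the way projectivity of $P$ interacts with the splittings obtained from irreducibility. This is precisely the content of the classical argument in \cite[Chapter V, Theorem 3.3]{ars97} cited in the statement.
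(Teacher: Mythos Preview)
The paper does not supply its own proof of this lemma; it merely cites \cite[Chapter V, Theorem 3.3]{ars97} and moves on. So there is nothing in the paper to compare your argument against, and your task is really to reproduce the classical argument.

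Your proposal has a genuine gap precisely at the point you yourself flag as the crux. After correctly invoking the left almost split property of $f$ to produce $h\colon P \to S/S_0$ with $q = hf$, you do not actually carry out the contradiction: you gesture at ``constructing an endomorphism of $S$'' via the splitting $\rad(P) = f(S)\oplus P''$ and then defer back to \cite{ars97}, which is the very reference the lemma is quoting. That is circular. In fact the missing step is short and does not use that splitting at all. Choose $S_0$ to be a \emph{maximal} proper submodule of $S$, so that $S/S_0$ is simple. Then any nonzero map $h\colon P\to S/S_0$ is surjective, hence $\ker(h)$ is a maximal submodule of $P$ and therefore contains $\rad(P)$. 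Combined with $f(S)\subseteq \rad(P)$ from your first step, this forces $hf=0$, contradicting $hf=q\neq 0$. Thus $S$ has no nonzero proper submodule and is simple. Your detour through $\rad(P)=f(S)\oplus P''$ is unnecessary.

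There is also a smaller slip in your first step: from $f(S)\not\subseteq\rad(P)$ you jump to $f(S)+\rad(P)=P$, but this is not automatic when $P$ is decomposable, and the definition of a node does not require $P$ to be indecomposable (your parenthetical ``taking $P$ indecomposable for simplicity'' is an unjustified reduction). The repair is routine---project to an indecomposable summand $P'$ of $P$ on which $f$ is surjective modulo the radical, apply Nakayama to get a surjection $S\twoheadrightarrow P'$, then split off the projective $P'$ from the indecomposable non-projective $S$ to reach a contradiction---but as written the argument is incomplete there as well.
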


\begin{Def}{\rm (\cite[Definition 4.3]{c21})}{\rm
 A node $S$ in $A\modcat$ is said to be an $F$-\emph{exceptional node} if $F(S)\not\cong F'(S)$. Let $\mathfrak{n}_{F}(A)$ be the set of isomorphism classes of $F$-exceptional nodes of $A$.}
\end{Def}
Since $\mathfrak{n}_{F}(A)$ is a subset of all simple modules, $\mathfrak{n}_{F}(A)$ is a finite set.

 \begin{Lem}{\rm (see \cite[Lemma 3.4]{ar78} or \cite[Chapter X.1.7, p. 340]{ars97})}
  Let $X$ be indecomposable, non-projective, non-injective, and not a node in $A\modcat$, then $F(X)\cong F'(X)$ .
 \end{Lem}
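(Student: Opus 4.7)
The plan is to deduce $F(X)\cong F'(X)$ by establishing the key commutativity $\tau_B^{-1} F(X)\cong F(\tau_A^{-1}(X))$ up to projective summands, and then invoking the defining identity $F'=\tau_B\circ F\circ \tau_A^{-1}$. Since $X$ is indecomposable and non-injective, $A\modcat$ admits an almost split sequence
\[
0\lra X\lra E\lra \tau_A^{-1}(X)\lra 0,
\]
and the assumption that $X$ is not a node says precisely that $E$ is not a projective $A$-module. Hence $E$ has at least one non-projective indecomposable summand and therefore represents a nonzero object of $\stmodcat{A}$ whose image $F(E)\in\stmodcat{B}$ is also nonzero.

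Next, I would appeal to the classical fact (which is the substance of the lemma) that a stable equivalence carries an almost split sequence with non-projective middle term to an almost split sequence in the target, up to adjoining a projective summand in the middle. Concretely, there should exist a projective $B$-module $Q$ such that
\[
0\lra F(X)\lra F(E)\oplus Q\lra F(\tau_A^{-1}(X))\lra 0
\]
is almost split in $B\modcat$. Reading off the AR-translate from this sequence yields $\tau_B^{-1} F(X)\cong F(\tau_A^{-1}(X))$ in $B\modcat_{\mathscr{P}}$, and applying $\tau_B$ then gives
\[
F'(X)\;=\;\tau_B F(\tau_A^{-1}(X))\;\cong\;\tau_B\tau_B^{-1}F(X)\;=\;F(X),
\]
using that $F(X)$ has no projective summand. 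A dual consideration based on the AR-sequence ending at $X$, together with $X$ being non-projective, ensures the comparison also lands in $B\modcat_{\mathscr{I}}$, so the isomorphism is unambiguous in $B\modcat$.

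The main obstacle is the middle step: verifying that $F$ actually transports the chosen almost split sequence to an almost split sequence in $B\modcat$. The natural route is to characterize AR-sequences in the stable category in terms of irreducible maps and near-split behavior, both of which are preserved by the additive equivalence $F$, and then to lift the resulting stable-category data back to an honest short exact sequence in $B\modcat$, with the projective summand $Q$ accounting for the ambiguity of such a lift. The non-node hypothesis is essential at exactly this point: were $E$ projective, the entire sequence would collapse in $\stmodcat{A}$, leaving nothing for $F$ to transport, and the correspondence between $F(X)$ and $F'(X)$ would have to be established by a genuinely different argument (and in fact may fail, as the definition of exceptional node in the paper anticipates).
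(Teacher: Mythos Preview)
The paper does not give its own proof of this lemma; it is simply quoted from the cited references \cite[Lemma 3.4]{ar78} and \cite[Chapter X.1.7]{ars97}, so there is nothing in the paper to compare against directly.

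That said, your outline is exactly the classical argument found in those sources. You have correctly isolated both the role of the non-node hypothesis (it guarantees the middle term $E$ of the almost split sequence starting at $X$ is non-projective, so the sequence does not collapse in $\stmodcat{A}$) and the one genuine gap in your sketch: the transport of the almost split sequence through $F$ to an almost split sequence in $B\modcat$. One small simplification: once you have an almost split sequence $0\to F(X)\to F(E)\oplus Q\to F(\tau_A^{-1}X)\to 0$ in $B\modcat$, you can read off $F(X)\cong \tau_B(F(\tau_A^{-1}X))=F'(X)$ directly from the definition of the AR translate, without the detour through $\tau_B\tau_B^{-1}$ and the attendant worry about whether $F(X)$ is non-injective.

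For the transport step itself, the standard route in \cite{ar78,ars97} is to use that an additive equivalence between stable categories preserves irreducible morphisms (these are detected as the maps in $\rad/\rad^2$ of the stable category), and that an exact sequence whose end terms are indecomposable non-projective and whose component maps are irreducible is almost split. The projective correction $Q$ arises when one lifts the stable morphisms back to genuine module maps. Your diagnosis that this is the substantive content of the lemma is correct; the references carry it out in full.
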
 Then $\mathfrak{n}_{F}(A)$ and the set of isomorphism classes
of indecomposable, non-projective, non-injective $A$-modules $U$ such that $F(U)\not\cong F'(U)$, coincide.

Let $F^{-1}:\stmodcat{B}\ra \stmodcat{A}$ be a quasi-inverse of $F$. Then we use $\mathfrak{n}_{F^{-1}}(B)$ to denote the the set of isomorphism classes of $F^{-1}$-exceptional nodes of $B$.

In the following, let $$\bigtriangleup_A
:=\mathfrak{n}_{F}(A)
\dot{\cup}(\mathscr{P}_A\setminus\mathscr{I}_A)
\quad\mbox{and}\quad \bigtriangledown_A
:=\mathfrak{n}_{F}(A)\dot{\cup}(\mathscr{I}_A\setminus\mathscr{P}_A),$$ where $\dot{\cup}$ stands for the disjoint union of sets; $\mathscr{P}_A$ and $\mathscr{I}_A$ stand for the set of isomorphism classes of indecomposable projective and injective $A$-modules, respectively. By $\bigtriangleup_A^c$ we mean the class of indecomposable, non-injective $A$-modules which do not belong to $\bigtriangleup_A$.

\begin{Rem}\label{rem-tri}
Each module $X\in A\modcat_{\mathscr{I}}$ admits a unique decomposition (up to isomorphism)
$$X\cong X_{\triangle}\oplus  X_c $$
with $X_{\triangle}\in \add(\bigtriangleup_A)$ and $X_c\in \add(\bigtriangleup^{c}_{A})$.
The module $X_{\triangle}$ and $X_c$ are called the $\triangle_{A}$\textit{-component} and $\bigtriangleup^{c}_{A}$\textit{-component} of $X$, respectively.
\end{Rem}

\begin{Lem}\label{one-to-one}{\rm (\cite[Lemma 4.10]{c21})}
There exist one-to-one correspondences
$$F:\bigtriangledown_A\lra \bigtriangledown_B
,\quad
F':\bigtriangleup_A\lra \bigtriangleup_B
\quad\mbox{and}\quad
F':\bigtriangleup_A^c\lra \bigtriangleup_B^c.$$
\end{Lem}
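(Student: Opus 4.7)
The plan is to verify each of the three correspondences by a uniform template: show that the declared functor sends every indecomposable in the source set into the target set, and then obtain bijectivity by running the symmetric argument for the quasi-inverse $F^{-1}$ and its partner $(F^{-1})'=\tau_A F^{-1}\tau_B^{-1}$. The key tools are $(i)$ the equivalences $F:\stmodcat{A}\ra\stmodcat{B}$ and $F':A\mbox{-}\overline{\mbox{mod}}\ra B\mbox{-}\overline{\mbox{mod}}$ induce bijections on iso classes of indecomposable non-projective (resp.\ non-injective) modules; $(ii)$ the composition identities $F^{-1}F\cong\mathrm{id}$ and $(F^{-1})'F'\cong\mathrm{id}$ on the respective stable categories; and $(iii)$ the Auslander--Reiten principle recalled in the excerpt, that $F(X)\cong F'(X)$ for every indecomposable non-projective non-injective non-node $X$, together with the definitional fact that $F(S)\cong F'(S)$ for every non-$F$-exceptional node $S$ (and the analogues for $F^{-1}$ and $(F^{-1})'$).

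For $F:\bigtriangledown_A\ra\bigtriangledown_B$, take $S\in\mathfrak{n}_F(A)$. The module $F(S)$ is indecomposable non-projective. Suppose for contradiction that $F(S)$ is non-injective and is either not a node of $B$ or a node that is not $F^{-1}$-exceptional; then $(iii)$ applied to $F^{-1}$ yields $F^{-1}(F(S))\cong(F^{-1})'(F(S))$, whence $S\cong(F^{-1})'(F(S))$. Combined with $S\cong(F^{-1})'(F'(S))$ from $(ii)$ and the bijectivity of $(F^{-1})'$ on iso classes of indecomposable non-injective $B$-modules, this forces $F(S)\cong F'(S)$, contradicting $S\in\mathfrak{n}_F(A)$. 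Hence $F(S)\in\mathscr{I}_B\setminus\mathscr{P}_B$ or $F(S)\in\mathfrak{n}_{F^{-1}}(B)$, so $F(S)\in\bigtriangledown_B$. For $I\in\mathscr{I}_A\setminus\mathscr{P}_A$ the same assumption on $F(I)$ forces $(F^{-1})'(F(I))\cong F^{-1}(F(I))\cong I$, but the left-hand side is indecomposable non-injective while $I$ is injective, a contradiction, so again $F(I)\in\bigtriangledown_B$.

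The map $F':\bigtriangleup_A\ra\bigtriangleup_B$ is handled by the strictly dual template. For $S\in\mathfrak{n}_F(A)$, assuming $F'(S)$ is indecomposable non-projective and is either not a node or a non-$F^{-1}$-exceptional node gives $F^{-1}(F'(S))\cong(F^{-1})'(F'(S))\cong S\cong F^{-1}(F(S))$, hence $F'(S)\cong F(S)$, a contradiction; so $F'(S)\in\mathscr{P}_B\setminus\mathscr{I}_B$ or $F'(S)\in\mathfrak{n}_{F^{-1}}(B)$. For $P\in\mathscr{P}_A\setminus\mathscr{I}_A$ the analogous assumption produces $F^{-1}(F'(P))\cong(F^{-1})'(F'(P))\cong P$, impossible since the left-hand side is indecomposable non-projective while $P$ is projective. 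For $F':\bigtriangleup_A^c\ra\bigtriangleup_B^c$, every $X\in\bigtriangleup_A^c$ is indecomposable non-projective non-injective and either not a node or a non-$F$-exceptional node, so $F(X)\cong F'(X)$ by $(iii)$; one then rules out $F(X)$ being injective (else $F'(X)\cong 0$ in $B\mbox{-}\overline{\mbox{mod}}$, contradicting non-injectivity of $X$) and $F(X)$ being an $F^{-1}$-exceptional node (else $(F^{-1})'(F(X))\not\cong F^{-1}(F(X))\cong X$, whereas $F(X)\cong F'(X)$ and $(F^{-1})'(F'(X))\cong X$ force agreement), leaving $F'(X)\in\bigtriangleup_B^c$. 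Bijectivity in all three cases follows by running the same arguments for $F^{-1}$ and $(F^{-1})'$.

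The main obstacle in executing this plan is bookkeeping rather than mathematical: one must consistently distinguish between isomorphism as honest modules and isomorphism in $\stmodcat{A}$ or $A\mbox{-}\overline{\mbox{mod}}$, because the cancellation implications $(F^{-1})'Y_1\cong(F^{-1})'Y_2\Rightarrow Y_1\cong Y_2$ and $F^{-1}Y_1\cong F^{-1}Y_2\Rightarrow Y_1\cong Y_2$ hold only when the $Y_i$ are indecomposable non-injective and non-projective respectively. The hypotheses placed on each candidate module in the contradiction arguments above are precisely what keeps these cancellations legitimate, and one needs to verify this invariant at every invocation of $(iii)$.
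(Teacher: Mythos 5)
The paper does not prove this lemma at all: it is imported verbatim from \cite[Lemma 4.10]{c21}, so there is no in-text argument to compare against. Your reconstruction is correct and complete relative to the facts the paper recalls (the bijections $F:A\modcat_{\mathscr{P}}\ra B\modcat_{\mathscr{P}}$ and $F':A\modcat_{\mathscr{I}}\ra B\modcat_{\mathscr{I}}$, the identity $(F^{-1})'\circ F'\cong \mathrm{id}$ coming from $F'=\tau_B F\tau_A^{-1}$, and the Auslander--Reiten lemma that $F(X)\cong F'(X)$ for indecomposable non-projective, non-injective non-nodes, which together with the definition of exceptional node covers every case in your contradiction arguments). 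In particular your case analysis correctly negates ``non-injective and not an $F^{-1}$-exceptional node'' to land in $\bigtriangledown_B=\mathfrak{n}_{F^{-1}}(B)\dot{\cup}(\mathscr{I}_B\setminus\mathscr{P}_B)$ (and dually for $\bigtriangleup_B$), and the cancellation steps are invoked only where the relevant modules are indecomposable and non-projective (resp.\ non-injective), which is exactly the point you flag yourself; this is essentially the argument one finds in the cited source.
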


Recall that an exact sequence $0\ra X\lraf{f} Y\lraf{g} Z\ra 0$ in $A\modcat$ is called \emph{minimal} (\cite{MV1990}) if it has no a split exact sequence as a direct summand, that is, there does not exist isomorphisms $u$, $v$, $w$ such that the following diagram
$$\xymatrix{
0\ar[r]
&X\ar[r]^{f}\ar[d]^{u}
&Y\ar[r]^g\ar[d]^{v}
&Z\ar[r]\ar[d]^{w}
&0\\
0\ar[r]&X_1\oplus
X_2\ar[r]^{\text{
$\left(
\begin{smallmatrix}
f_1&0\\
0&f_2
\end{smallmatrix}\right)
$}}
&Y_1\oplus Y_2\ar[r]^{\text{
$\left(
\begin{smallmatrix}
g_1&0\\
0&g_2
\end{smallmatrix}\right)
$}}&Z_1\oplus Z_2\ar[r]&0
}$$
is commutative and has exact rows, where $Y_2\neq 0$ and $0\ra X_2\lraf{f_2} Y_2\lraf{g_2}Z_2\ra 0$ is split.

\begin{Lem}\label{minimal-1.1-1}
Let $A$ be an Artin algebra and $X,Y\in A\modcat$.
Let $u:X\oplus Y \ra X\oplus Y$ be a morphism with
$u=\left(
\begin{smallmatrix}
f_{11}&0\\
f_{21}&f_{22}
\end{smallmatrix}\right).$
If $f_{11}$ and $f_{22}$ are two isomorphisms, then $u$ is also an isomorphism.
\end{Lem}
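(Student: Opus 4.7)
The plan is to prove the lemma by explicitly constructing a two-sided inverse for $u$. Since $u$ is lower triangular and its diagonal entries $f_{11}$ and $f_{22}$ are invertible, I expect the inverse to also be lower triangular, with diagonal entries $f_{11}^{-1}$ and $f_{22}^{-1}$ and a lower-left entry chosen to cancel the effect of $f_{21}$.

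Concretely, I would define
\[
v := \begin{pmatrix} f_{11}^{-1} & 0 \\ -f_{22}^{-1} f_{21} f_{11}^{-1} & f_{22}^{-1} \end{pmatrix} : X\oplus Y \lra X\oplus Y
\]
(adjusting the order of composition to the paper's left-to-right convention $fg$ if needed, which just swaps the factors in the off-diagonal entry). Then I would verify $uv = \Id_{X\oplus Y}$ and $vu = \Id_{X\oplus Y}$ by direct block matrix multiplication: the diagonal blocks collapse to $f_{11}f_{11}^{-1}$ and $f_{22}f_{22}^{-1}$, while the lower-left block becomes $f_{21}f_{11}^{-1} - f_{22}f_{22}^{-1}f_{21}f_{11}^{-1} = 0$.

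Alternatively, one could argue abstractly without writing down $v$: use the five lemma applied to the commutative diagram with exact rows
\[
\xymatrix@C=2em@R=1.5em{
0 \ar[r] & X \ar[r]^-{\iota_1} \ar[d]^{f_{11}} & X\oplus Y \ar[r]^-{\pi_2} \ar[d]^{u} & Y \ar[r] \ar[d]^{f_{22}} & 0 \\
0 \ar[r] & X \ar[r]^-{\iota_1} & X\oplus Y \ar[r]^-{\pi_2} & Y \ar[r] & 0,
}
\]
where the rows are the canonical split short exact sequences; since $f_{11}$ and $f_{22}$ are isomorphisms, the five lemma forces $u$ to be an isomorphism as well. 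Either route gives the conclusion, and neither presents any real obstacle — the only care needed is to keep track of the paper's reversed composition convention when writing the off-diagonal entry of $v$, so that the cancellation in the lower-left block actually works out to zero.
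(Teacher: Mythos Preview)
Your primary approach---explicitly writing down the lower-triangular inverse $v$ and checking $uv=vu=\Id$---is exactly what the paper does (the paper's proof has a harmless typo writing $f_{12}$ for $f_{21}$ in the off-diagonal entry of the inverse). Your alternative via the five lemma on the canonical split sequences is also correct and perhaps cleaner, but it is not the route the paper takes.
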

\begin{proof}
Let $u'=\left(
\begin{smallmatrix}
f_{11}^{-1}&0\\
-f_{22}^{-1} f_{12}f_{11}^{-1} &f_{22}^{-1}
\end{smallmatrix}\right)$.
We can check that
$$uu'= \left(
\begin{smallmatrix}
f_{11}&0\\
f_{21}&f_{22}
\end{smallmatrix}\right)\circ \left(
\begin{smallmatrix}
f_{11}^{-1}&0\\
-f_{22}^{-1} f_{12}f_{11}^{-1} &f_{22}^{-1}
\end{smallmatrix}\right)=
\left(
\begin{smallmatrix}
1_{X}&0\\
0&1_{Y}
\end{smallmatrix}\right)=1_{X\oplus Y}$$
and
$$u'u= \left(
\begin{smallmatrix}
f_{11}^{-1}&0\\
-f_{22}^{-1} f_{12}f_{11}^{-1} &f_{22}^{-1}
\end{smallmatrix}\right)\circ
\left(
\begin{smallmatrix}
f_{11}&0\\
f_{21}&f_{22}
\end{smallmatrix}\right)
=
\left(
\begin{smallmatrix}
1_{X}&0\\
0&1_{Y}
\end{smallmatrix}\right)=1_{X\oplus Y}.$$
That is, $u$ is an isomorphism.
\end{proof}
\begin{Lem}\label{minimal-1.1}
Let $A$ be an Artin algebra. Let $0\ra E\oplus X\ra M \ra Z\ra 0$ be a short exact sequence in $A\modcat$,
where $E$ is a injective module. Then there exist isomorphisms $u$ and $v$ such that the following diagram with rows exact commutes.
$$\xymatrix{
0\ar[r]
& E\oplus X\ar[r]^{f}\ar[d]^{u}
&M\ar[r]^g\ar[d]^{v}
&Z\ar[r]\ar@{=}[d]
&0\\
0\ar[r]&E\oplus X\ar[r]^{\text{
$\left(
\begin{smallmatrix}
1_{E}&0\\
0&f_{22}
\end{smallmatrix}\right)
$}}
&E\oplus Y\ar[r]^{\text{
$\left(
\begin{smallmatrix}
0\\
g_2
\end{smallmatrix}\right)
$}}& Z\ar[r]&0.
}$$
\end{Lem}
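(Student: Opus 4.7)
The plan is to exploit the injectivity of $E$ to split off a copy of $E$ from $M$, and then use an elementary change of basis on $E\oplus X$ to make the inclusion into block-diagonal form. Since the paper composes morphisms in diagrammatic order ($fg$ means first $f$ then $g$), I will keep matrix products consistent with that convention throughout.

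First I would show $M$ decomposes as $E\oplus Y$ for some $Y$. Let $\iota_E\colon E\to E\oplus X$ denote the canonical inclusion. Because $E$ is injective and $\iota_E f\colon E\to M$ is a monomorphism, it splits. Choose a retraction to obtain an isomorphism $v\colon M\to E\oplus Y$ with the crucial property that $(\iota_E f)v$ equals the canonical inclusion $E\to E\oplus Y$. Writing $fv\colon E\oplus X\to E\oplus Y$ in matrix form using this splitting, the first row is forced to be $(1_E,0)$, so
$$fv=\begin{pmatrix} 1_E & 0\\ \alpha & \beta\end{pmatrix}$$
for some $\alpha\colon X\to E$ and $\beta\colon X\to Y$.

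Next I would eliminate the off-diagonal term $\alpha$ by a triangular automorphism of $E\oplus X$. Set
$$u:=\begin{pmatrix} 1_E & 0\\ \alpha & 1_X\end{pmatrix}\colon E\oplus X\lra E\oplus X,$$
which is an isomorphism by Lemma \ref{minimal-1.1-1}, and set $f_{22}:=\beta$. A direct block computation in diagrammatic order gives
$$u\begin{pmatrix} 1_E & 0\\ 0 & f_{22}\end{pmatrix}=\begin{pmatrix} 1_E & 0\\ \alpha & \beta\end{pmatrix}=fv,$$
so the left square of the diagram commutes. Define the bottom right map by $g':=v^{-1}g\colon E\oplus Y\to Z$; then the right square commutes trivially since the right vertical map is the identity.

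It remains to check that $g'$ has the stated form $\begin{pmatrix}0\\ g_2\end{pmatrix}$, i.e.\ that its restriction to the $E$-summand vanishes. But by construction the composite $E\xrightarrow{\iota_E}E\oplus Y\xrightarrow{v^{-1}}M$ equals $\iota_E f\colon E\to M$, whose image lies in $\img f=\ker g$; hence $\iota_E g'=\iota_E v^{-1}g=0$, giving $g'=\begin{pmatrix}0\\ g_2\end{pmatrix}$ for a unique $g_2\colon Y\to Z$. Exactness of the bottom row then follows from exactness of the top row together with the fact that $u$ and $v$ are isomorphisms. The only delicate point — really the main bookkeeping hazard rather than a genuine obstacle — is keeping the matrix composition order consistent with the paper's diagrammatic convention; once that is fixed, the verification is purely formal.
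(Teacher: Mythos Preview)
Your proof is correct and follows essentially the same strategy as the paper: use the injectivity of $E$ to split off a copy of $E$ from $M$ (the paper phrases this via a pushout diagram, you via a direct retraction of $\iota_E f$), and then apply a lower-triangular automorphism of $E\oplus X$ via Lemma~\ref{minimal-1.1-1} to block-diagonalize. The only difference is presentational, not substantive.
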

\begin{proof}
Consider the following pushout
 \begin{gather}\label{pushout-diag}
\begin{split}
\xymatrix{
&0\ar[d] & 0 \ar[d]&&\\
 & E\ar@{=}[r]\ar[d]_{\text{
$\left(
\begin{smallmatrix}
1_{E}&0
\end{smallmatrix}\right)
$}}
& E\ar[d] & &\\
0\ar[r]& E\oplus X  \ar[r]^{f}\ar[d]_{\text{
$\left(
\begin{smallmatrix}
0\\1_{X}
\end{smallmatrix}\right)
$}}  & M\ar[r]^{g}\ar[d]& Z \ar[r]\ar @{=}[d]&0\\
0\ar[r]& X\ar[r]\ar[d] & Y \ar[r]\ar[d]&Z \ar[r]&0\\
&0&0& & }
\end{split}
\end{gather}
we can get that $M\cong E\oplus Y$ since $E$ is a injective module.
In the above diagram, replace $M$ with $E\oplus Y$, and set
 $f=\left(
\begin{smallmatrix}
f_{11}&f_{12}\\
f_{21}&f_{22}\\
\end{smallmatrix}\right)$ and
$g=\left(
\begin{smallmatrix}
g_{1}\\g_{2}
\end{smallmatrix}\right).$
Hence the commutative diagram (\ref{pushout-diag}) can be written as
\begin{gather}\label{6}
\begin{split}
\xymatrix{
&0\ar[d] & 0 \ar[d]&&\\
 & E\ar@{=}[r]\ar[d]_{\text{
$\left(
\begin{smallmatrix}
1_{E}&0
\end{smallmatrix}\right)
$}}
& E\ar[d]^{\text{
$\left(
\begin{smallmatrix}
1_{E}&0
\end{smallmatrix}\right)
$}} & &\\
0\ar[r]& E\oplus X  \ar[r]^{\text{
$\left(
\begin{smallmatrix}
f_{11}&f_{12}\\
f_{21}&f_{22}\\
\end{smallmatrix}\right)
$}}\ar[d]_{\text{
$\left(
\begin{smallmatrix}
0\\1_{X}
\end{smallmatrix}\right)
$}}  & E\oplus Y\ar[r]^{\text{
$\left(
\begin{smallmatrix}
g_{1}\\g_{2}
\end{smallmatrix}\right)
$}}\ar[d]_{\text{
$\left(
\begin{smallmatrix}
0\\1_{Y}
\end{smallmatrix}\right)
$}}
& Z \ar[r]\ar @{=}[d]&0\\
0\ar[r]& X\ar[r]^{f_{22}}\ar[d] & Y \ar[r]^{g_{2}}\ar[d]&Z \ar[r]&0\\
&0&0& & }
\end{split}
\end{gather}
Then
\begin{equation*}
\left\{
\begin{aligned}
\left(\begin{smallmatrix}
1_{E}&0
\end{smallmatrix}\right)
\left(\begin{smallmatrix}
f_{11}&f_{12}\\
f_{21}&f_{22}
\end{smallmatrix}\right)
&=\left(
\begin{smallmatrix}
1_{E}&0
\end{smallmatrix}\right)\\
\left(\begin{smallmatrix}
f_{11}&f_{12}\\
f_{21}&f_{22}
\end{smallmatrix}\right)
\left(
\begin{smallmatrix}
0\\1_{Y}
\end{smallmatrix}\right)
&=\left(
\begin{smallmatrix}
0\\1_{Y}
\end{smallmatrix}\right)
f_{22}\\
\left(
\begin{smallmatrix}
0\\1_{Y}
\end{smallmatrix}\right)
g_{2}=&\left(
\begin{smallmatrix}
g_{1}\\g_{2}
\end{smallmatrix}\right).
\end{aligned}
\right.
\end{equation*}
Thus $f_{11}=1_{E},f_{12}=0$ and $g_{1}=0.$
And then we have the following commutative diagram with rows exact
$$\xymatrix{
0\ar[r]
&E\oplus X\ar[r]^{\text{
$\left(
\begin{smallmatrix}
1_{E}&0\\
f_{21}&f_{22}
\end{smallmatrix}\right)
$}}\ar[d]^{\text{
$\left(
\begin{smallmatrix}
1_{E}&0\\
f_{21}&1_{X}
\end{smallmatrix}\right)
$}}
&E\oplus Y\ar[r]^{\text{
$\left(
\begin{smallmatrix}
0\\g_{2}
\end{smallmatrix}\right)
$}}\ar[d]^{\text{
$\left(
\begin{smallmatrix}
1_{E}&0\\
0&1_{Y}
\end{smallmatrix}\right)
$}}
&Z\ar@{=}[d]\ar[r]
&0\\
0\ar[r]&E\oplus
X\ar[r]_{\text{
$\left(
\begin{smallmatrix}
1_{E}&0\\
0&f_{22}
\end{smallmatrix}\right)
$}}
&E\oplus Y\ar[r]_{\text{
$\left(
\begin{smallmatrix}
0\\
g_2
\end{smallmatrix}\right)
$}}& Z\ar[r]&0
}$$
where $u=\left(
\begin{smallmatrix}
1_{E}&0\\
f_{21}&1_{Y}
\end{smallmatrix}\right)$ and
$v=\left(
\begin{smallmatrix}
1_{E}&0\\
0&1_{Y}
\end{smallmatrix}\right)$
are isomorphisms by Lemma \ref{minimal-1.1-1}.
\end{proof}

\begin{Lem}\label{minimal-1}
Let $0\ra X\ra Y\ra Z\ra 0$ be a minimal short exact sequence in $A\modcat$.
Then $X\in A\modcat_{\mathscr{I}}$ and $Z\in A\modcat_{\mathscr{P}}$.
\end{Lem}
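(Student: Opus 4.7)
The plan is to prove the two statements by contradiction using Lemma \ref{minimal-1.1} for the first assertion and its dual version for the second. Concretely, I will assume that $X$ has a nonzero injective direct summand and extract from the given short exact sequence a nontrivial split direct summand of the form $0\to E\xrightarrow{1_E} E\to 0\to 0$, which contradicts the minimality hypothesis. The dual argument handles the case that $Z$ has a nonzero projective summand.

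First I would suppose, for contradiction, that $X\cong E\oplus X'$ with $E$ a nonzero injective $A$-module. Then the given minimal short exact sequence can be rewritten as
$$0\lra E\oplus X'\lraf{f} Y \lraf{g} Z\lra 0.$$
Applying Lemma \ref{minimal-1.1} to this sequence yields isomorphisms $u$ and $v$ fitting in a commutative diagram whose lower row has the form
$$0\lra E\oplus X'\xrightarrow{\left(\begin{smallmatrix}1_{E}&0\\0&f_{22}\end{smallmatrix}\right)} E\oplus Y'\xrightarrow{\left(\begin{smallmatrix}0\\g_{2}\end{smallmatrix}\right)} Z\lra 0,$$
which decomposes as the direct sum of the split exact sequence $0\to E\xrightarrow{1_E} E\to 0\to 0$ (with nonzero middle term $E$) and the exact sequence $0\to X'\xrightarrow{f_{22}} Y'\xrightarrow{g_2} Z\to 0$. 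Since $u$ and $v$ are isomorphisms, this exhibits a split direct summand of the original sequence with nonzero middle term, contradicting the assumption that the original sequence is minimal. Hence $X$ has no nonzero injective summand, i.e. $X\in A\modcat_{\mathscr{I}}$.

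For the second assertion, I would apply the dual of Lemma \ref{minimal-1.1}: if $Z\cong P\oplus Z'$ with $P$ a nonzero projective $A$-module, then one can use a pullback construction (dual to the pushout used in Lemma \ref{minimal-1.1}) to produce isomorphisms fitting the original sequence into a commutative diagram whose bottom row decomposes as the direct sum of the split sequence $0\to 0\to P\xrightarrow{1_P} P\to 0$ and an exact sequence with right-hand term $Z'$. Again this contradicts minimality, so $Z\in A\modcat_{\mathscr{P}}$. The only mild obstacle is to state the dual of Lemma \ref{minimal-1.1} cleanly (using the projectivity of $P$ to lift the identity of $P$ through the pullback), but the argument is entirely parallel; no new ideas are required.
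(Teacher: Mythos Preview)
Your proposal is correct and follows essentially the same approach as the paper: both argue by contradiction, invoking Lemma~\ref{minimal-1.1} to exhibit a nontrivial split summand when $X$ has a nonzero injective summand, and then appeal to the dual argument for the second assertion. The paper's version is simply terser, writing only ``by Lemma~\ref{minimal-1.1} the sequence is not minimal'' and ``similarly'' for the projective case, whereas you spell out the resulting direct-sum decomposition explicitly.
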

\begin{proof}
If $X\notin A\modcat_{\mathscr{I}}$, then we have
$X=X'\oplus E$ for some injective module $0\neq E$ and $X'\in A\modcat_{\mathscr{I}}$.
By Lemma \ref{minimal-1.1}, we get that
$0\ra X\ra Y\ra Z\ra 0$ is not a minimal short exact sequence, a contradiction. Thus $X\in A\modcat_{\mathscr{I}}$.
Similarly, we also can get $Z\in A\modcat_{\mathscr{P}}$.
\end{proof}

The next lemma shows that the stable functor has certain ``exactness" property.
\begin{Lem}{\rm (\cite[Lemma 4.3]{zhang24})}\label{exact}
Let $Z$ be an $A$-module without nonzero projective summands, and let $$0\lra X\oplus X'\lra Y\oplus P\lraf{g}Z\lra 0$$ be a minimal short exact sequence in $A\modcat$ such that $X\in \add(\bigtriangleup_A^c)$, $X'\in \add(\bigtriangleup_A)$, $Y\in A\modcat_{\mathscr{P}}$ and $P\in\add(_AA)$.
Then there exists a minimal short exact sequence $$0\lra F(X)\oplus F'(X')\lra F(Y)\oplus Q\lraf{g'}F(Z)\lra 0$$ in $B\modcat$
such that $Q\in \add(_BB)$ and $g'=F(g)$ in $\stmodcat{B}$.
\end{Lem}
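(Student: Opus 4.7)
The plan is to transfer the given minimal short exact sequence across the stable equivalence $F$ by lifting the image of $g$ in $\stmodcat{B}$ to an honest surjection in $B\modcat$ and then identifying its kernel. I would proceed in three phases.

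\emph{Setup and lifting.} By Lemma \ref{minimal-1}, minimality forces $X\oplus X'$ to have no injective summands, consistent with $X\in\add(\bigtriangleup_A^c)$ and $X'\in\add(\bigtriangleup_A)$ both lying in $A\modcat_{\mathscr{I}}$. Split $X' = X'_n\oplus X'_p$ with $X'_n\in\add(\mathfrak{n}_F(A))$ a sum of exceptional nodes and $X'_p\in\add(\mathscr{P}_A\setminus\mathscr{I}_A)$ a sum of non-injective indecomposable projectives. Passing to $\stmodcat{A}$ kills the projective summand $P$, giving $\underline{g}:\underline{Y}\to\underline{Z}$. Apply $F$ to obtain $F(\underline{g}):\underline{F(Y)}\to\underline{F(Z)}$, pick any $B\modcat$-representative of $F(\underline{g})$, and adjoin the projective cover of its cokernel to manufacture a surjection $g':F(Y)\oplus Q\to F(Z)$ with $Q\in\add(_BB)$ and $\underline{g'}=F(\underline{g})$. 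Let $K:=\ker g'$.

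\emph{Identifying the kernel.} Decompose $K = K_{\mathscr{I}} \oplus K_{\bigtriangleup} \oplus K_c$ with $K_{\mathscr{I}}$ injective, $K_{\bigtriangleup}\in\add(\bigtriangleup_B)$ and $K_c\in\add(\bigtriangleup_B^c)$ using Remark \ref{rem-tri}. On $\bigtriangleup_B^c$ the functors $F$ and $F'$ agree (its objects are non-injective, non-projective and not exceptional nodes), so by Lemma \ref{one-to-one} and an application of the quasi-inverse $F^{-1}$, the summand $K_c$ matches $F(X)\cong F'(X)$ after cancelling projective summands. For $X'_p$: we have $F(X'_p)=0$ while $F'(X'_p)\in\add(\bigtriangleup_B)$ records the injective-envelope information, forcing this contribution into $K_{\bigtriangleup}$. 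For the exceptional nodes $X'_n$, I would invoke the Mart\'inez-Villa analysis of nodes via their defining almost split sequences (see \cite{MV1990,c21}) to show that $F'(X'_n)$, and not $F(X'_n)$, appears in $K_{\bigtriangleup}$. Absorbing superfluous projective summands into $Q$ yields $K\cong F(X)\oplus F'(X')$.

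\emph{Minimality and the main obstacle.} Both $F(X)\in\add(\bigtriangleup_B^c)$ and $F'(X')\in\add(\bigtriangleup_B)$ lie in $B\modcat_{\mathscr{I}}$ by Lemma \ref{one-to-one}, so the kernel has no injective summands; $F(Z)$ has no projective summands because $Z$ does not and $F$ bijects isomorphism classes in $A\modcat_{\mathscr{P}}$ with those in $B\modcat_{\mathscr{P}}$. A standard split-summand argument (the converse of Lemma \ref{minimal-1}) then delivers minimality of
$$0\lra F(X)\oplus F'(X')\lra F(Y)\oplus Q\lraf{g'} F(Z)\lra 0.$$
The main obstacle lies in the kernel identification: proving that $K$ is literally $F(X)\oplus F'(X')$ rather than merely stably isomorphic to it, and in particular that the exceptional-node summands of $X'$ enter via $F'$ rather than $F$. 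This is precisely where the subtle distinction between $F$ and its ``injective'' companion $F'$ matters, and where the classical analysis of nodes under stable equivalence from \cite{MV1990} becomes indispensable.
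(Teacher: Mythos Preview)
The paper does not prove this lemma; it is quoted verbatim from \cite[Lemma 4.3]{zhang24} and used as a black box. So there is no in-paper argument to compare yours against, and I will assess your sketch on its own.

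Your outline is the right shape, but two steps are not yet proofs. First, the kernel identification. You produce $g'$ by lifting $F(\underline g)$ and adjoining a projective cover of the cokernel, then assert that the kernel $K$ decomposes as $F(X)\oplus F'(X')$. But different lifts of $F(\underline g)$ differ by maps factoring through projectives, and their kernels can differ by more than projective summands; knowing $K$ only up to the stable category is not enough to pin down its $\bigtriangleup_B$- and $\bigtriangleup_B^c$-components. The phrase ``invoke the Mart\'inez-Villa analysis'' hides the entire content of the lemma: one needs an honest argument (as in \cite{MV1990,c21}) that a \emph{minimal} lift exists whose kernel has the claimed form, typically by tracking how $F$ and $F'$ interact with the Auslander--Reiten translate and with $\Ext^1$, not merely by decomposing an arbitrary $K$ after the fact.

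Second, your minimality argument is incorrect as stated. Lemma~\ref{minimal-1} says minimality implies the kernel has no injective summand and the cokernel no projective summand; the converse is false. For instance, the direct sum of any sequence with a split sequence $0\to U\to U\oplus V\to V\to 0$ (with $U$ non-injective, $V$ non-projective) still satisfies your hypotheses yet is not minimal. So even after you know $K\cong F(X)\oplus F'(X')$, you must strip off split summands and then argue that the middle term remains of the form $F(Y)\oplus Q'$ with $Q'$ projective and that the kernel is unchanged; this is additional work you have not done.
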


\begin{Lem}\label{exact1}
Let
\begin{align}\label{exact2}
0\lra X\lra Y\lra Z\lra 0
\end{align} be a short exact sequence in $A\modcat$.
Then there exists a short exact sequence $$0\lra V \lra F(Y)\oplus Q\lra F(Z)\lra 0$$ in $B\modcat$
with $Q\in \add(_BB)$ and $V\in [F(X)\oplus F'(W)]_{1}$, where $W$ is an $A$-module with $\add(W)=\add(\bigtriangleup_A)$.
\end{Lem}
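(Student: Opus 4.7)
The plan is to reduce the given sequence to one that meets the hypotheses of Lemma \ref{exact}, apply that lemma, and then reassemble the split pieces.

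\textbf{Step 1 (Minimalization).} By standard Krull--Schmidt theory for short exact sequences in $A\modcat$, the sequence (\ref{exact2}) decomposes as a direct sum of a minimal short exact sequence and two split ones:
$$0\lra X_{0}\lra Y_{0}\lra Z_{0}\lra 0, \quad 0\lra S\xrightarrow{1}S\lra 0\lra 0, \quad 0\lra 0\lra T\xrightarrow{1}T\lra 0,$$
so that $X\cong X_{0}\oplus S$, $Y\cong Y_{0}\oplus S\oplus T$, $Z\cong Z_{0}\oplus T$. By Lemma \ref{minimal-1}, $X_{0}\in A\modcat_{\mathscr{I}}$ and $Z_{0}\in A\modcat_{\mathscr{P}}$.

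\textbf{Step 2 (Preparing the hypotheses of Lemma \ref{exact}).} By Remark \ref{rem-tri}, since $X_{0}$ has no nonzero injective summands, it admits a decomposition $X_{0}=X_{c}\oplus X_{\triangle}$ with $X_{c}\in\add(\bigtriangleup_{A}^{c})$ and $X_{\triangle}\in\add(\bigtriangleup_{A})$. Write $Y_{0}=Y'\oplus P$ with $Y'\in A\modcat_{\mathscr{P}}$ and $P\in\add({_{A}A})$. The minimal sequence now has precisely the form required by Lemma \ref{exact}.

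\textbf{Step 3 (Applying Lemma \ref{exact}).} Lemma \ref{exact} yields a (minimal) short exact sequence
$$0\lra F(X_{c})\oplus F'(X_{\triangle})\lra F(Y')\oplus Q\lra F(Z_{0})\lra 0$$
in $B\modcat$ with $Q\in\add({_{B}B})$. Taking the direct sum of this with the two split sequences $0\to F(S)\xrightarrow{1}F(S)\to 0\to 0$ and $0\to 0\to F(T)\xrightarrow{1}F(T)\to 0$ (obtained by applying the additive functor $F$ to the split pieces in Step 1) gives
$$0\lra F(X_{c})\oplus F'(X_{\triangle})\oplus F(S)\lra F(Y')\oplus Q\oplus F(S)\oplus F(T)\lra F(Z_{0})\oplus F(T)\lra 0.$$
Since $F$ kills projective $A$-modules we have $F(P)=0$, hence the middle term equals $F(Y)\oplus Q$, and the right-hand term equals $F(Z)$.

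\textbf{Step 4 (Bounding $V$).} Set $V:=F(X_{c})\oplus F'(X_{\triangle})\oplus F(S)$. Because $X\cong X_{c}\oplus X_{\triangle}\oplus S$, we have $F(X)=F(X_{c})\oplus F(X_{\triangle})\oplus F(S)$; because $X_{\triangle}\in\add(\bigtriangleup_{A})=\add(W)$, we have $F'(X_{\triangle})\in\add(F'(W))$. Hence
$$V\in\add\bigl(F(X)\oplus F'(W)\bigr)=[F(X)\oplus F'(W)]_{1},$$
as desired.

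The only nontrivial input is Lemma \ref{exact}; the main bookkeeping obstacle is tracking how the $\bigtriangleup_{A}$-versus-$\bigtriangleup_{A}^{c}$ decomposition of $X_{0}$ interacts with the extra split summand $S$, which is resolved by using $F'(W)$ to absorb the discrepancy between $F(X_{\triangle})$ and $F'(X_{\triangle})$.
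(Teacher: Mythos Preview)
Your proof is correct and follows essentially the same route as the paper: split off the non-minimal (split) part of the sequence, apply Lemma \ref{exact} to the minimal part after decomposing its left term along $\bigtriangleup_A$ versus $\bigtriangleup_A^c$ and its middle term into projective and nonprojective summands, and then reassemble using additivity of $F$. The only cosmetic difference is that you break the split part into two trivial split sequences $0\to S\to S\to 0$ and $0\to T\to T\to 0$, whereas the paper keeps it as a single split sequence $0\to X_2\to Y_2\to Z_2\to 0$; this has no effect on the argument.
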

\begin{proof}
We can decompose the short exact sequence (\ref{exact2}) as the direct sums of the following two short exact sequences
\begin{align}\label{exact3}
0\lra X_1\lra Y_1\lra Z_1\lra 0,
\end{align}
\begin{align}\label{exact4}
0\lra X_2\lra Y_2\lra Z_2\lra 0
\end{align}
in $A\modcat$, namely there are isomorphisms $u$, $v$, $w$ with the following commutative diagram
\begin{align}\label{2row}
\xymatrix@C=0.5em@R=0.5em{
0\ar[rr]
&&X\ar[rr]\ar[dd]^{u}
&&Y\ar[rr]\ar[dd]^{v}
&&Z\ar[rr]\ar[dd]^{w}
&&0\\
&&&&&&&&\\
0\ar[rr]
&&X_1\oplus
X_2\ar[rr]
&&Y_1\oplus Y_2\ar[rr]
&&Z_1\oplus Z_2\ar[rr]
&&0,
}\end{align}
in $A\modcat$ such that (\ref{exact3}) is minimal and (\ref{exact4}) is split.
By Lemma \ref{minimal-1}, we have
$X_{1}\in A\modcat_{\mathscr{I}}$ and $Z_{1}\in A\modcat_{\mathscr{P}}$.
By Remark \ref{rem-tri}, we write
$X_{1}\cong X_{1}'\oplus X_{2}''$, where $X_{1}'\in \add(\bigtriangleup_A^c)$ and $X_{2}''\in \add(\bigtriangleup_A)$
, $Y_{1}\cong Y_{1}'\oplus P$ where
$Y'_{1}\in A\modcat_{\mathscr{P}}$ and $ P\in \pmodcat{A}$.
That is, we have the following exact sequence
\begin{align}\label{exact5}
0\lra X_{1}'\oplus X_{2}''\lra  Y_{1}'\oplus P\lra Z_1\lra 0.
\end{align}
And by Lemma \ref{exact} and the exact sequence (\ref{exact5}),
we get the following exact sequence
\begin{align}\label{exact6}
0\lra F(X_{1}')\oplus F'(X_{2}'')\lra  F(Y_{1}')\oplus Q\lra F(Z_1)\lra 0.
\end{align}
where $Q\in \add(_BB)$.
Applying the functor $F$ to the split short exact sequence (\ref{exact4}), we can get the following exact sequence
\begin{align}\label{exact7}
0\lra F(X_2)\lra F(Y_2)\lra F(Z_2)\lra 0.
\end{align}
By exact sequence (\ref{exact6}) and (\ref{exact7}), we can get the following exact sequence
\begin{align}\label{exact8}
0\lra F(X_{1}')\oplus F'(X_{2}'') \oplus F(X_2)\lra  F(Y_{1}')\oplus Q \oplus F(Y_2)\lra F(Z_1)\oplus F(Z_2)\lra 0.
\end{align}
Then the exact sequence (\ref{exact8}) can be written as
\begin{align}\label{exact9}
0\lra V\lra  F(Y)\oplus Q \lra F(Z)\lra 0,
\end{align}
where $$V:=F(X_{1}')\oplus F'(X_{2}'') \oplus F(X_2)\cong F(X_{1}'\oplus X_{2})\oplus F'(X_{2}'')\in [F(X)\oplus F'(W)]_{1},$$
$F(Y)=F(Y_{1}\oplus Y_{2})=F(Y_{1}'\oplus P\oplus Y_{2})=F(Y_{1}')\oplus F(Y_{2}),$
and $F(Z)=F(Z_{1}\oplus Z_{2})\cong F(Z_1)\oplus F(Z_2)$.
\end{proof}
\begin{Lem}{\rm (\cite[Lemma 4.14]{c21})}\label{chen-rigid}
  Let $X\in A\modcat$ and $n$ a positive integer.
  Then
  $$F(\Omega_{A}^{n}(X))\oplus \bigoplus_{j=1}^{n}\Omega_{B}^{n-j}(F'(\Omega_{A}^{j}(X)_{\triangle}))
  \cong \Omega_{B}^{n}(F(X))\oplus  \bigoplus_{j=1}^{n}\Omega_{B}^{n-j}(F(\Omega_{A}^{j}(X)_{\triangle}))$$
  where $\Omega^{j}_{A}(X)_{\triangle}$ stands for the $\triangle_{A}$-component of the $A$-module $\Omega^{j}_{A}(X)$.
\end{Lem}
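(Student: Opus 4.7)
The plan is to induct on $n$, with the inductive step being essentially a syzygy-shifting computation once the base case $n=1$ is settled.

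For the base case $n=1$, start with the projective cover $P_0 \to X$ and the associated short exact sequence $0 \to \Omega_A(X) \to P_0 \to X \to 0$. After splitting off the (trivial) projective summands of $X$ and applying Lemma \ref{minimal-1}, we may assume this sequence is minimal, so in particular $\Omega_A(X) \in A\modcat_{\mathscr{I}}$. By Remark \ref{rem-tri}, decompose $\Omega_A(X) = \Omega_A(X)_{\triangle} \oplus \Omega_A(X)_c$ with $\Omega_A(X)_{\triangle} \in \add(\bigtriangleup_A)$ and $\Omega_A(X)_c \in \add(\bigtriangleup_A^c)$. Applying Lemma \ref{exact} yields, in $B\modcat$, a short exact sequence
\[
0 \lra F(\Omega_A(X)_c) \oplus F'(\Omega_A(X)_{\triangle}) \lra Q \lra F(X) \lra 0
\]
with $Q \in \add({}_BB)$, since $F(P_0)=0$. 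Because this is a projective presentation of $F(X)$, the left-hand term agrees with $\Omega_B(F(X))$ up to a projective $B$-summand. Adding $F(\Omega_A(X)_{\triangle})$ to both sides and using $F(\Omega_A(X)_c) \oplus F(\Omega_A(X)_{\triangle}) = F(\Omega_A(X))$, we obtain, modulo projectives,
\[
F(\Omega_A(X)) \oplus F'(\Omega_A(X)_{\triangle}) \cong \Omega_B(F(X)) \oplus F(\Omega_A(X)_{\triangle}),
\]
which, after a Heller-type cancellation \cite{Heller60} (as invoked in the proof of Theorem \ref{der-thm}), becomes a genuine isomorphism and establishes the $n=1$ case.

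For the inductive step, assume the formula for $n-1$. Applying $\Omega_B$ term by term (using additivity of $\Omega_B$ modulo projectives) gives
\[
\Omega_B(F(\Omega_A^{n-1}(X))) \oplus \bigoplus_{j=1}^{n-1} \Omega_B^{n-j}(F'(\Omega_A^j(X)_{\triangle})) \cong \Omega_B^n(F(X)) \oplus \bigoplus_{j=1}^{n-1} \Omega_B^{n-j}(F(\Omega_A^j(X)_{\triangle})).
\]
Now apply the base case with $X$ replaced by $Y := \Omega_A^{n-1}(X)$, so $\Omega_A(Y) = \Omega_A^n(X)$ and $\Omega_A(Y)_{\triangle} = \Omega_A^n(X)_{\triangle}$; this expresses $\Omega_B(F(\Omega_A^{n-1}(X)))$ in terms of $F(\Omega_A^n(X))$, $F'(\Omega_A^n(X)_{\triangle})$ and $F(\Omega_A^n(X)_{\triangle})$. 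Substituting into the displayed identity and collecting like terms produces the desired formula for $n$.

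The main obstacle will be the careful bookkeeping of projective summands: Lemma \ref{exact} only provides equalities up to projective $B$-summands, and each application of $\Omega_B$ is well defined only up to projectives. All intermediate isomorphisms must therefore first be read in $\stmodcat{B}$ and then lifted to genuine module isomorphisms via Heller-type cancellation, exactly as in the proof of Theorem \ref{der-thm}. A secondary technical point is to verify that the ``minimal part'' decomposition implicit in Lemma \ref{exact1} is compatible with the decomposition $\Omega_A^j(X) = \Omega_A^j(X)_{\triangle} \oplus \Omega_A^j(X)_c$, so that $F'$ is applied to precisely $\Omega_A^j(X)_{\triangle}$ and not to some stably isomorphic variant; this is what ensures the two sides of the claimed formula match summand by summand rather than merely stably.
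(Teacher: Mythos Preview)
The paper does not supply its own proof of this lemma; it is quoted directly from \cite[Lemma 4.14]{c21} without argument, so there is no in-paper proof to compare against. Your inductive strategy is the standard one and matches how the result is established in the cited source: the base case $n=1$ comes from applying Lemma~\ref{exact} to a minimal projective presentation of $X$ (so $Y=0$ and the middle term is purely projective), and the inductive step is exactly the syzygy shift you describe, substituting the base case for $\Omega_A^{n-1}(X)$.

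One clarification worth making explicit: the isomorphism in the lemma should be read in $\stmodcat{B}$, i.e.\ up to projective summands, not as a literal isomorphism in $B\modcat$. Your Heller-cancellation step only yields an isomorphism after adding projectives to each side, which is precisely the stable statement; it does not upgrade to a strict module isomorphism unless both sides are already projective-free, which is not guaranteed here. This is harmless for the intended application in Theorem~\ref{st-thm}, since the categories $[\,\cdot\,]_{m+1}$ are closed under adding projective summands, but your write-up should not claim a ``genuine isomorphism'' in $B\modcat$. The secondary bookkeeping worry you raise (compatibility of the minimal-sequence decomposition with the $\triangle$/$\triangle^c$ splitting) is handled automatically: for the projective cover sequence the middle term is already projective, so in Lemma~\ref{exact} one has $Y=0$ and the $X$, $X'$ there are precisely $\Omega_A(X)_c$ and $\Omega_A(X)_\triangle$.
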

Zhang-Zheng have the following result.
\begin{Lem}{\rm (\cite[Theorem 1.2]{zhang24})}\label{dimensionA=B}
Let $A$ and $B$ be stably equivalent Artin algebras. Then $$\ed A\modcat=\ed B\modcat.$$
\end{Lem}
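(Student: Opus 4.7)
The plan is to exploit the symmetry afforded by a quasi-inverse of $F$, so that it suffices to prove $\ed B\modcat \leqslant \ed A\modcat$. Let $n := \ed A\modcat$ and fix $T \in A\modcat$ with $A\modcat \subseteq [T]_{n+1}$. Since $\bigtriangleup_A = \mathfrak{n}_F(A) \,\dot{\cup}\, (\mathscr{P}_A \setminus \mathscr{I}_A)$ is a finite set, I can pick a single $A$-module $W$ with $\add(W) = \add(\bigtriangleup_A)$. The goal is to show that the one module
$$S := F(T) \oplus F'(W) \oplus B \in B\modcat$$
already satisfies $B\modcat \subseteq [S]_{n+1}$, which, by Definition \ref{def-subcat-extension-dim}, yields $\ed B\modcat \leqslant n$.

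Given $M \in B\modcat$, I would first split $M \cong M_{\mathscr{P}} \oplus Q$ with $M_{\mathscr{P}} \in B\modcat_{\mathscr{P}}$ and $Q \in \pmodcat{B}$, and then use the bijection $F: A\modcat_{\mathscr{P}} \ra B\modcat_{\mathscr{P}}$ to choose $N \in A\modcat_{\mathscr{P}}$ with $F(N) \cong M_{\mathscr{P}}$. Since $N \in A\modcat \subseteq [T]_{n+1}$, unfolding this membership produces a chain of short exact sequences
$$0 \lra T_i \lra X_i \oplus X_i' \lra X_{i+1} \lra 0, \qquad 1 \leqslant i \leqslant n,$$
in $A\modcat$ with $X_1 = N$, each $T_i \in \add(T)$, and $X_{i+1} \in [T]_{n+1-i}$; in particular $X_{n+1} \in \add(T)$. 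Applying Lemma \ref{exact1} to each of these sequences then yields short exact sequences
$$0 \lra V_i \lra F(X_i) \oplus F(X_i') \oplus Q_i \lra F(X_{i+1}) \lra 0$$
in $B\modcat$, with $V_i \in [F(T_i) \oplus F'(W)]_1 \subseteq [S]_1$ and $Q_i \in \add({_B}B) \subseteq [S]_1$.

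A downward induction on $i$ (from $i = n+1$ to $i = 1$) will now give $F(X_i) \in [S]_{n+2-i}$: the base case $F(X_{n+1}) \in [F(T)]_1 \subseteq [S]_1$ is immediate, while the inductive step combines
$$F(X_i) \oplus F(X_i') \oplus Q_i \in [V_i]_1 \bullet [F(X_{i+1})]_1 \subseteq [S]_1 \bullet [S]_{n+1-i} = [S]_{n+2-i}$$
with the closure of $[S]_{n+2-i}$ under direct summands. Taking $i = 1$ delivers $M_{\mathscr{P}} \cong F(N) \in [S]_{n+1}$, and since $Q$ is a direct summand of a power of $S$, one concludes $M \in [S]_{n+1}$, as desired. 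The main obstacle here is controlling the correction term $F'(W)$ that Lemma \ref{exact1} attaches to each induced sequence: a priori these corrections could compound along the length-$n$ filtration, but the key point is that the module $W$ depends only on $\bigtriangleup_A$ and not on the particular short exact sequence, so every correction can be absorbed into the single test module $S$. Once this uniformity is recognised, the remainder is bookkeeping inside the $[\cdot]_k$-calculus of Section 2.
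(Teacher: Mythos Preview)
The paper does not supply its own proof of this lemma; it simply cites \cite[Theorem 1.2]{zhang24}. Your argument is correct and is, in fact, exactly the $n=0$ specialisation of the proof the paper gives for the more general Theorem \ref{st-thm}: pull an arbitrary non-projective $B$-module back along the bijection $F:A\modcat_{\mathscr{P}}\to B\modcat_{\mathscr{P}}$, unfold its $[T]_{n+1}$-filtration in $A\modcat$, push each short exact sequence forward via Lemma \ref{exact1}, and absorb the uniform $F'(W)$-correction (which depends only on $\bigtriangleup_A$, not on the sequence) into the fixed test module $S=F(T)\oplus F'(W)\oplus B$. Your downward induction is precisely the bookkeeping that the paper carries out in the display following (\ref{exact11}); the only difference is that for $n>0$ the paper must additionally invoke Lemma \ref{chen-rigid} to relate $F(\Omega_A^n(X))$ to $\Omega_B^n(F(X))$, a step that collapses when $n=0$.
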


\begin{Theo}\label{st-thm}
Let $A$ and $B$ be stably equivalent Artin algebras. Then $$\ed\Omega^{n}(A\modcat)=\ed\Omega^{n}(B\modcat)$$ for each nonnegative integer $n$.
\end{Theo}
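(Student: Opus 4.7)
The plan is to prove $\ed\Omega^n(A\modcat) \leq \ed\Omega^n(B\modcat)$; the reverse inequality will follow by applying the same argument to the quasi-inverse $F^{-1}$ of the given stable equivalence $F$. The case $n=0$ is exactly Lemma \ref{dimensionA=B}, so I may assume $n\geq 1$. Set $m := \ed\Omega^n(B\modcat)$ and fix $T \in B\modcat$ with $\Omega^n(B\modcat) \subseteq [T]_{m+1}$. Fix also an $A$-module $W$ with $\add(W) = \add(\triangle_A)$ and a $B$-module $W'$ with $\add(W') = \add(\triangle_B)$; both exist because the sets $\mathfrak{n}_F(A)$ and $\mathfrak{n}_{F^{-1}}(B)$ are finite.

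For $X \in \Omega^n(A\modcat)$, decompose $X = \Omega^n_A(M) \oplus P$ with $P \in \pmodcat{A}$. The first step is to invoke Lemma \ref{chen-rigid}, which yields in $\stmodcat{B}$ the isomorphism
$$F(\Omega^n_A(M)) \oplus \bigoplus_{j=1}^n \Omega^{n-j}_B(F'(\Omega^j_A(M)_\triangle)) \cong \Omega^n_B(F(M)) \oplus \bigoplus_{j=1}^n \Omega^{n-j}_B(F(\Omega^j_A(M)_\triangle)).$$
Since each $\Omega^j_A(M)_\triangle$ lies in $\add(W)$, all correction summands lie in $\add(U)$, where $U := \bigoplus_{k=0}^{n-1}\Omega^k_B(F(W) \oplus F'(W)) \oplus B$ is a fixed $B$-module. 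Lifting the stable isomorphism to $B\modcat$ by adjoining a projective, $F(\Omega^n_A(M))$ becomes a direct summand of $\Omega^n_B(F(M)) \oplus U_0$ for some $U_0 \in \add(U)$. Since $\Omega^n_B(F(M)) \in [T]_{m+1}$ and $[T \oplus U]_{m+1}$ is closed under direct sums and summands, this forces $F(\Omega^n_A(M)) \in [T \oplus U]_{m+1}$.

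The second step is to transport this filtration back to $A\modcat$ via $F^{-1}$. By induction on $k$, using Lemma \ref{exact1} applied to $F^{-1}$, I would show that $Y \in [S]_k$ in $B\modcat$ implies $F^{-1}(Y) \in [F^{-1}(S) \oplus (F^{-1})'(W') \oplus A]_k$ in $A\modcat$; at each inductive step, the injective-correction summand $(F^{-1})'(W')$ and the projective summand introduced by Lemma \ref{exact1} are absorbed into a single fixed generator. Applied with $S = T \oplus U$ and $k = m+1$, this yields $F^{-1}(F(\Omega^n_A(M))) \in [F^{-1}(T \oplus U) \oplus (F^{-1})'(W') \oplus A]_{m+1}$. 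Since $F^{-1}(F(\Omega^n_A(M)))$ agrees with $\Omega^n_A(M)$ up to projective summands and $A$ is already a summand of the generator, both $\Omega^n_A(M)$ and $X$ lie in the same extension closure; the generator being independent of $X$ delivers $\ed\Omega^n(A\modcat) \leq m$. The main obstacle is the inductive pull-back in this second step: each application of Lemma \ref{exact1} introduces an injective-type correction in the left term and a projective in the middle term of the produced short exact sequence, and one must verify that these accumulated corrections assemble into a single module independent of the filtration length.
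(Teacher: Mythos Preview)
Your proposal is correct and uses the same two key ingredients as the paper, namely Lemma \ref{chen-rigid} and Lemma \ref{exact1}, but assembles them in a slightly different order. The paper proves the inequality $\ed\Omega^n(B\modcat)\leqslant\ed\Omega^n(A\modcat)$ directly: given $\Omega^n_B(M)$, it chooses $X\in A\modcat_{\mathscr P}$ with $F(X)\cong M'$, pushes the $[T]_{m+1}$-filtration of $\Omega^n_A(X)$ forward to $B\modcat$ via Lemma \ref{exact1} (obtaining $F(\Omega^n_A(X))\in[F(T)\oplus F'(W)]_{m+1}$), and then invokes Lemma \ref{chen-rigid} on the $B$-side to replace $F(\Omega^n_A(X))$ by $\Omega^n_B(F(X))\cong\Omega^n_B(M)$ at the cost of finitely many correction summands $\Omega_B^{n-j}(F'(W))$. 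Your route runs the other way: you first use Lemma \ref{chen-rigid} to place $F(\Omega^n_A(M))$ inside $[T\oplus U]_{m+1}$ on the $B$-side, and then transport the filtration \emph{back} to $A\modcat$ via Lemma \ref{exact1} for $F^{-1}$, exploiting $F^{-1}F(\Omega^n_A(M))\cong\Omega^n_A(M)$. The paper's version is marginally more economical, since it only ever applies one stable functor rather than a round trip $F$ then $F^{-1}$, and so the correction module stays on one side; but your worry about the accumulated corrections is already resolved exactly as in the paper: they all lie in $\add(F'(W))$ or $\add((F^{-1})'(W'))$ together with projectives, which is a fixed finite set independent of the filtration length, so a single generator absorbs them.
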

\begin{proof}
By Lemma \ref{dimensionA=B}, we only need consider the case $n>0$.
For a module $\Omega^{n}_{B}(M)\in \Omega^{n}(B\modcat)$, where $M\in B\modcat$.
We can set $M\cong M'\oplus M''$ where $M'\in B\modcat_{\mathscr{P}}$ and $M''\in \pmodcat{B}$.
Since $F: A\modcat_{\mathscr{P}}\ra B\modcat_{\mathscr{P}}$ is an one-to-one correspondence, there exists a module $X\in A\modcat_{\mathscr{P}}$ such that $F(X)\cong M'$ as $B$-modules.
Then we have
\begin{equation}\label{exact10.1}
\xymatrix@C=1.5em@R=0.1em{
\Omega^{n}_{B}(M)\cong \Omega^{n}_{B}(M'\oplus M'')\cong \Omega^{n}_{B}(M')\cong \Omega^{n}_{B}(F(X)).}
\end{equation}

Suppose that $\ed\Omega^{n}(A\modcat)=m.$
Then we can set
$\ed\Omega^{n}(A\modcat)\subseteq [T]_{m+1}$ for some module $T\in A\modcat$ by Definition \ref{def-subcat-extension-dim}.
For a module $\Omega^{n}_{A}(X)\in \Omega^{n}(A\modcat)\subseteq [T]_{m+1}$, we have the following short exact sequences
\begin{equation}\label{exact10}
\begin{cases}
\xymatrix@C=1.5em@R=0.1em{
0\ar[r]& X_{1}\ar[r] & \Omega^{n}_{A}(X)\oplus X'\ar[r]& Y_{1}\ar[r]&0,\\
0\ar[r]& X_{2}\ar[r] & Y_{1}\oplus Y_{1}'\ar[r]& Y_{2}\ar[r]&0,\\
&&\vdots&&\\
0\ar[r]& X_{m}\ar[r] & Y_{m-1}\oplus Y_{m-1}'\ar[r]& Y_{m}\ar[r]&0.
}
\end{cases}
\end{equation}
where $X_i\in [T]_1$ and $Y_i\in [T]_{m+1-i}$ for $1\leqslant i\leqslant m$.
Let $W$ be an $A$-module with
$\add(W)=\add(\bigtriangleup_A).$
By Lemma \ref{exact1} and the above short exact sequences (\ref{exact10}), we can
get the following short exact sequences
\begin{equation}\label{exact11}
\begin{cases}
\xymatrix@C=0.8em@R=0.1em{
0\ar[r]& V_{1}\ar[r] & F(\Omega^{n}_{A}(X))\oplus F(X')\oplus Q_{1}\ar[r]& F(Y_{1})\ar[r]&0,\\
0\ar[r]& V_{2}\ar[r] & F(Y_{1})\oplus F(Y_{1}')\oplus Q_{2}\ar[r]& F(Y_{2})\ar[r]&0,\\
&&\vdots&&\\
0\ar[r]& V_{m-1}\ar[r] & F(Y_{m-2})\oplus F(Y_{m-2}')\oplus Q_{m-1}\ar[r]& F(Y_{m-1})\ar[r]&0,\\
0\ar[r]& V_{m}\ar[r] & F(Y_{m-1})\oplus F(Y_{m-1}')\oplus Q_{m}\ar[r]& F(Y_{m})\ar[r]&0,
}
\end{cases}
\end{equation}
where $V_{i}\in[F(T)\oplus F'(W)]_{1}$ for $1\leqslant i\leqslant m$ and $Y_{m}\in[F(T)]_{1}$.
Thus
\begin{equation}\label{exact10.2}
\xymatrix@C=1.5em@R=0.1em{
F(\Omega^{n}_{A}(X))\in[F(T)\oplus F'(W)]_{m+1}.}
\end{equation}
Denoted by $\Omega^{j}_{A}(X)_{\triangle}$ stands for the $\triangle_{A}$-component of the $A$-module $\Omega^{j}_{A}(X)$ for $1\leqslant j\leqslant n$. Then we have
 \begin{align*}
 & \Omega_{B}^{n}(M)\oplus  \bigoplus_{j=1}^{n}\Omega_{B}^{n-j}(F(\Omega_{A}^{j}(X)_{\triangle}))\\
\cong & \Omega_{B}^{n}(F(X))\oplus  \bigoplus_{j=1}^{n}\Omega_{B}^{n-j}(F(\Omega_{A}^{j}(X)_{\triangle}))\ \ \ \ \text{(by (\ref{exact10.1}))}\\
\cong&F(\Omega^{n}_{A}(X))\oplus  \bigoplus_{j=1}^{n}\Omega_{B}^{n-j}(F'(\Omega_{A}^{j}(X)_{\triangle}))\ \ \ \ \text{(by Lemma \ref{chen-rigid})}\\
\subseteq& [F(T)\oplus F'(W)]_{m+1}\oplus [\bigoplus_{j=1}^{n}\Omega_{B}^{n-j}(F'(\Omega_{A}^{j}(X)_{\triangle}))]_{1}\ \ \ \ \text{(by (\ref{exact10.2}))}\\
\subseteq& [F(T)\oplus F'(W)]_{m+1}\oplus [\bigoplus_{j=1}^{n}\Omega_{B}^{n-j}(F'(W))]_{1}\\
\subseteq& [F(T)\oplus \bigoplus_{j=1}^{n}\Omega_{B}^{n-j}(F'(W))]_{m+1}.\ \ \ \ \text{(by Lemma \ref{sum-subseteq-1})}
\end{align*}
Note that
$\Omega^{n}(B\modcat)=\{K\oplus Q\in B\modcat \mid K\cong\Omega^{n}(M) \text{ for some }B\text{-module } M \text{ and } Q\in \pmodcat{B}\}.$
Then we have
$$\Omega^{n}(B\modcat)\subseteq [F(T)\oplus \bigoplus_{j=1}^{n}\Omega_{B}^{n-j}(F'(W))\oplus B]_{m+1}.$$
By Definition \ref{def-subcat-extension-dim}, we have $$\ed \Omega^{n}(B\modcat) \leqslant m=\ed \Omega^{n}(A\modcat).$$
Similarly, we also can get
$$\ed \Omega^{n}(A\modcat) \leqslant \ed \Omega^{n}(B\modcat).$$
Moreover, we get $$\ed \Omega^{n}(A\modcat) = \ed \Omega^{n}(B\modcat).$$
This finishes the proof.
\end{proof}

\begin{Koro}
If $A$ and $B$ are stably equivalent, then $A$ is $n$-syzygy-finite if and only if so $B$ does for each integer $n\in\mathbb{N}$.
\end{Koro}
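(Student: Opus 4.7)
The plan is to reduce the corollary to Theorem \ref{st-thm} via the observation that representation-finiteness of a subcategory is detected by its extension dimension being zero. First I would record the following extension of Lemma \ref{ext-lem}(1): for any subcategory $\mathcal{C}$ of $A\modcat$, $\mathcal{C}$ is representation-finite if and only if $\ed \mathcal{C} = 0$. This is immediate from the definitions: if $\mathcal{C}$ has only finitely many isomorphism classes of indecomposables, say represented by $M_1,\dots,M_r$, then $T := \bigoplus_{i=1}^r M_i$ satisfies $\mathcal{C} \subseteq \add(T) = [T]_1$, so $\ed\mathcal{C} = 0$; conversely, if $\mathcal{C} \subseteq [T]_1 = \add(T)$ for some $T \in A\modcat$, then every indecomposable in $\mathcal{C}$ appears as a direct summand of $T$, and there are only finitely many such summands up to isomorphism.

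With this in hand, the proof becomes a routine chain of equivalences. By Definition \ref{def-syzygy-modcat}, $A$ is $n$-syzygy-finite exactly when $\Omega^n(A\modcat)$ is representation-finite, and similarly for $B$. Applying the preceding observation to $\mathcal{C} = \Omega^n(A\modcat)$ and $\mathcal{C} = \Omega^n(B\modcat)$, together with Theorem \ref{st-thm}, we obtain
\begin{equation*}
A \text{ is $n$-syzygy-finite} \iff \ed \Omega^n(A\modcat) = 0 \iff \ed \Omega^n(B\modcat) = 0 \iff B \text{ is $n$-syzygy-finite},
\end{equation*}
which is the desired statement.

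There is essentially no obstacle here: the whole content lies in Theorem \ref{st-thm}, and the only auxiliary point is the elementary characterization of representation-finite subcategories as those of extension dimension zero, which is a straightforward adaptation of Lemma \ref{ext-lem}(1). No delicate choice of modules or further exactness arguments are required.
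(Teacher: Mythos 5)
Your proof is correct and follows essentially the same route as the paper: both reduce the statement to Theorem \ref{st-thm} via the equivalence ``$\mathcal{C}$ is representation-finite $\iff \ed\mathcal{C}=0$'' applied to $\Omega^n(A\modcat)$ and $\Omega^n(B\modcat)$. Your explicit justification of that equivalence (via $[T]_1=\add(T)$) is a detail the paper leaves implicit, but the argument is the same.
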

\begin{proof}
Note that $A$ is $n$-syzygy-finite if and only if $\Omega^{n}(A\modcat)$ is representation-finite if and only if $\ed\Omega^{n}(A\modcat)=0$ by Definition \ref{def-syzygy-modcat}. Similarly, $B$ is $n$-syzygy-finite if and only if $\Omega^{n}(B\modcat)$ is representation-finite if and only if $\ed\Omega^{n}(B\modcat)=0$. By Theorem \ref{st-thm}, $\ed\Omega^{n}(A\modcat)=0$ if and only if $\ed\Omega^{n}(B\modcat)=0$. Then the Corollary follows.
\end{proof}

To illustrate Theorem \ref{st-thm}, we give the following example.
\begin{Bsp}{\rm (\cite[Example 4.8]{zhang24})
Let $k$ be a fixed field, $A=kQ_A/I$, where $Q_A$ is the quiver
$$\xymatrix@R=10pt{
&&1\ar[d]^{\beta}\ar@(ur,ul)_{\gamma}&&&&\\
2\ar[r]^{\alpha_2}
&3\ar[r]^{\alpha_3}
&4\ar[r]^{\alpha_4}
&5\ar[r]^{\alpha_5}
&6\ar[r]^{\alpha_6}
&\cdots \ar[r]^{\alpha_{n-1}}
&n
}$$
and $I$ is generated by $\{\gamma^2, \beta\gamma\}$ with $n\geqslant 6$.
By \cite[Lemma 1]{MV1980}, we know that $S(1)$ is a unique node of $A$. It follows from \cite[Theorem 2.10]{MV1980} that $A$ is stably equivalent to the path algebra $B$ given by the following quiver $Q_B$:
$$\xymatrix@R=10pt{
&&1'&&&&\\
&&1\ar[d]^{\beta}\ar[u]_{\delta}&&&&\\
2\ar[r]^{\alpha_2}
&3\ar[r]^{\alpha_3}
&4\ar[r]^{\alpha_4}
&5\ar[r]^{\alpha_5}
&6\ar[r]^{\alpha_6}
&\cdots \ar[r]^{\alpha_{n-1}}
&n.}$$
Since the underlying graph of $Q_B$ is not Dynkin, $B$ is representation-infinite and $\ed(B\modcat)=1$ by Lemmas \ref{ext-lem}(1) and \ref{ext-lem}(3). Due to $\gldim(B)=1$, $\Omega(B\modcat)\subseteq\pmodcat{B}$ and $\ed \Omega(B\modcat)=0$. In particular, $\ed \Omega^i(B\modcat)=0$ for each $i\geqslant 1$. By Theorem \ref{st-thm}, we have $\ed(A\modcat)=\ed(B\modcat)=1$, and
$\ed \Omega^i(A\modcat)=\ed \Omega^i(B\modcat)=0$ for each $i\geqslant 1$.}
\end{Bsp}

\begin{Bsp}{\rm
Let $k$ be a fixed field and $A$ be a $k$-algebra given by quiver with relations
$$\begin{array}{ccc}
\xymatrix@R=8pt{
1\ar[rd]^{\alpha}&&4&&\\
&3\ar[ru]^{\delta}\ar[rd]_{\beta}&&7\ar[rd]^{\nu}&\\
2\ar[ru]_{\gamma}&&5\ar[ru]^{\mu}\ar[rd]_{\eta}&&9\\
&&&8\ar[ru]_{\theta}&
}\\
\beta\alpha=\delta\gamma=\mu\beta=\nu\beta=0,\,\nu\mu=\theta\eta.
\end{array}$$
We claim that $A$ is representation finite. Indee, it follows from \cite[p. 430]{MV1980} that $A$ and $B_1\times B_2$ are stably equivalent, where $B_1$ and $B_2$ are $k$-algebras given by quivers with relations, respectively:
$$\begin{array}{ccc}
\xymatrix@R=8pt{
1\ar[rd]^{\alpha}&&4\\
&3\ar[ru]^{\delta}\ar[rd]_{\beta}&\\
2\ar[ru]_{\gamma}&&5\\
}
&\quad&
\xymatrix@R=8pt{
&7\ar[rd]^{\nu}&\\
6\ar[ru]^{\mu}\ar[rd]_{\eta}&&9\\
&8\ar[ru]_{\theta}&
}\\
\beta\alpha=\delta\gamma=0,&\quad&\nu\mu=\theta\eta=0.
\end{array}$$
By calculation, $B_1$ and $B_2$ are representation finite. And then $\ed (B_1)=\ed(B_2)=0$ by Lemma \ref{ext-lem}(1).
By Theorem \ref{st-thm}, $\ed(A)=0$. And then $A$ is representation finite.
}
\end{Bsp}

Now, suppose that $A$ and $B$ are stably equivalent Artin algebras without nodes. Then $\mathfrak{n}_{F}(A)=\varnothing$, $\mathfrak{n}_{F^{-1}}(B)=\varnothing$, and $F': \mathscr{P}_A\setminus\mathscr{I}_A\ra \mathscr{P}_B\setminus\mathscr{I}_B$ is bijective by Lemma \ref{one-to-one}. Moreover, Lemma \ref{exact} can be specified as follows.
\begin{Lem}\label{lemma-exact-node}
Let $A$ and $B$ be stably equivalent Artin algebras without nodes, and let
\begin{align*}
0\lra X\oplus P_1\lra P\lra Z\lra 0
\end{align*} be a minimal short exact sequence in $A\modcat$
where $X,Z\in A\modcat_{\mathscr{P}}$, $P_1\in \mathscr{P}_A\setminus\mathscr{I}_A$ and $P \in \pmodcat{A}$. Then
there is a minimal exact sequence
$$0\lra F(X)\oplus F'(P_1)\lra Q\lra F(Z)\lra 0$$
in $B\modcat$ where $Q \in \pmodcat{B}$.
\end{Lem}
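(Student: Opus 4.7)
The plan is to obtain this lemma as a direct specialization of Lemma \ref{exact} to the node-free setting, where the general hypotheses of that lemma simplify dramatically. Since $A$ has no nodes, $\mathfrak{n}_F(A)=\varnothing$, so $\bigtriangleup_A=\mathscr{P}_A\setminus\mathscr{I}_A$ and $\bigtriangleup_A^c$ consists precisely of the indecomposable non-injective, non-projective $A$-modules. In particular, $P_1\in\add(\bigtriangleup_A)$ by hypothesis, and the middle term $P$ can be written as $0\oplus P$ with $0\in A\modcat_{\mathscr{P}}$ and $P\in\add({}_AA)$, matching the decomposition required in Lemma \ref{exact}. The target $Z\in A\modcat_{\mathscr{P}}$ by hypothesis, so the only substantive verification is that $X\in\add(\bigtriangleup_A^c)$.

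The key step, and the only place where minimality is genuinely used, is to show that $X$ has no nonzero injective summand (it already has no projective summand since $X\in A\modcat_{\mathscr{P}}$). Suppose for contradiction that $X\cong X''\oplus I$ with $I$ a nonzero injective module. Then the restriction of the inclusion $X\oplus P_1\hookrightarrow P$ to $I$ gives a monomorphism $I\hookrightarrow P$, which splits by injectivity of $I$. Standard pullback/pushout bookkeeping then exhibits the split sequence $0\to I\to I\to 0\to 0$ as a direct summand of the original exact sequence (the analog of the argument in Lemma \ref{minimal-1.1}, applied to the left-hand term rather than the right), contradicting minimality. Hence $X$ has no injective summand, so $X\in\add(\bigtriangleup_A^c)$.

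With these verifications in hand, Lemma \ref{exact} applies directly to our sequence, taking the roles $X\leftrightarrow X$, $X'\leftrightarrow P_1$, $Y\leftrightarrow 0$, and middle projective $P$. It yields a minimal short exact sequence
$$0\lra F(X)\oplus F'(P_1)\lra F(0)\oplus Q\lra F(Z)\lra 0$$
in $B\modcat$ with $Q\in\add({}_BB)$. Since $F(0)=0$, this is exactly the required sequence $0\to F(X)\oplus F'(P_1)\to Q\to F(Z)\to 0$ with $Q\in\pmodcat{B}$. I expect no serious obstacle beyond the minimality-based ruling out of injective summands of $X$ described above; everything else is a mechanical translation through the no-nodes simplification of the $\bigtriangleup$-decomposition.
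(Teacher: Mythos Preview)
Your proposal is correct and follows exactly the paper's approach: the paper introduces this lemma with the sentence ``Moreover, Lemma~\ref{exact} can be specified as follows'' and gives no further proof, so the intended argument is precisely the specialization you carry out. One minor simplification: your ``key step'' ruling out injective summands of $X$ is already the content of Lemma~\ref{minimal-1} (the left term of a minimal short exact sequence lies in $A\modcat_{\mathscr{I}}$), so you can cite it directly rather than rearguing it via Lemma~\ref{minimal-1.1}.
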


\begin{Lem}\label{Lem-node-infinite-tran}
Let $A$ and $B$ be stably equivalent Artin algebras without nodes.
If $X\in\Omega^{\infty}(A\modcat)$, then $F(X)\oplus Q\in\Omega^{\infty}(B\modcat)$ for some $Q\in \pmodcat{B}$.
\end{Lem}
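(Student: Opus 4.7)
The plan is to transport a suitably minimal projective coresolution of $X$ across the stable equivalence via Lemma \ref{lemma-exact-node} and then splice the resulting short exact sequences into an infinite coresolution of $F(X)$. First reduce to the case $X\in A\modcat_\mathscr{P}$: write $X=X_0\oplus X_p$ with $X_0\in A\modcat_\mathscr{P}$ and $X_p\in\pmodcat{A}$; since $F$ kills projective modules, $F(X)=F(X_0)$, and $X_0\in\Omega^\infty(A\modcat)$ by closure of $\Omega^\infty$ under direct summands. Note also that $X_0$ has no nonzero injective summand: any injective summand would embed into a projective as part of any infinite coresolution and hence split off as a projective-injective summand of $X_0$, contradicting $X_0\in A\modcat_\mathscr{P}$.

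Next construct a projective coresolution $0\to X_0\to P^0\to P^1\to\cdots$ in which every associated short exact sequence $0\to K^i\to P^i\to K^{i+1}\to 0$ (with $K^0=X_0$) is minimal. This is done inductively by splitting off any projective direct summand of $K^{i+1}$ together with the matching summand of $P^i$; closure of $\Omega^\infty$ under direct summands keeps each remaining syzygy in $\Omega^\infty$ so that the construction continues. By Lemma \ref{minimal-1}, each $K^i$ has no injective summand and each $K^{i+1}$ has no projective summand. Decompose $K^i=U^i\oplus P^i_1$ with $U^i\in A\modcat_\mathscr{P}$ and $P^i_1\in\add(\mathscr{P}_A\setminus\mathscr{I}_A)$; in particular $U^0=X_0$ and $P^0_1=0$.

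Applying Lemma \ref{lemma-exact-node} to each minimal sequence yields, in $B\modcat$, a minimal short exact sequence
\begin{equation*}
(\star)_i:\qquad 0\lra F(U^i)\oplus F'(P^i_1)\lra Q^i\lra F(U^{i+1})\lra 0,
\end{equation*}
with $Q^i\in\pmodcat{B}$; here $F(K^{i+1})=F(U^{i+1})$ since $F$ vanishes on projectives, and by Lemma \ref{one-to-one} each $F'(P^i_1)$ lies in $\add(\mathscr{P}_B\setminus\mathscr{I}_B)$ and hence is projective. To chain the $(\star)_i$ together, form the direct sum of $(\star)_i$ with the trivial short exact sequence $0\to 0\to F'(P^{i+1}_1)\to F'(P^{i+1}_1)\to 0$; the cokernel becomes $F(U^{i+1})\oplus F'(P^{i+1}_1)$, matching the kernel of $(\star)_{i+1}$. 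Iterating the splice produces the infinite exact sequence
\begin{equation*}
0\lra F(X)\lra Q^0\oplus F'(P^1_1)\lra Q^1\oplus F'(P^2_1)\lra Q^2\oplus F'(P^3_1)\lra\cdots,
\end{equation*}
whose middle terms are all projective $B$-modules. Hence $F(X)\in\Omega^\infty(B\modcat)$ and the claim holds with $Q=0$. The principal technical hurdle is the construction of the minimal coresolution, which rests on $\Omega^\infty(A\modcat)$ being closed under direct summands; once this is in place, the application of Lemma \ref{lemma-exact-node} and the splicing are formal.
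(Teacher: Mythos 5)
Your overall strategy --- transport minimal short exact sequences across the stable equivalence via Lemma \ref{lemma-exact-node} and splice them in $B\modcat$ by adding trivial sequences on projective summands --- is exactly the paper's, but your preliminary step of building a globally ``minimal'' projective coresolution of $X_0$ contains a genuine gap. You invoke, twice, the closure of $\Omega^{\infty}(A\modcat)$ under direct summands: once to replace $X$ by its non-projective part $X_0$, and once at each inductive stage to keep the stripped syzygy $L$ (where $K^{i+1}\cong L\oplus P'$ with $P'$ projective) inside $\Omega^{\infty}(A\modcat)$ so that the coresolution can be continued from $L$. This closure property is neither proved in the paper nor obvious; it is precisely the kind of ``extended closure property'' of syzygy categories that is studied as a nontrivial problem in the references the paper cites, and it fails in general for $\Omega^{n}$ with $n\geqslant 2$. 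Concretely, from $0\to L\oplus P'\to P^{i+1}\to K^{i+2}\to 0$ you cannot split $P'$ off the kernel while keeping the middle term projective unless the composite $P'\hookrightarrow K^{i+1}\hookrightarrow P^{i+1}$ is a split monomorphism, which is not automatic; so neither the modified coresolution nor the membership $L\in\Omega^{\infty}(A\modcat)$ is available without further argument.

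The paper's proof sidesteps this entirely: it never alters the given coresolution of $X$. It decomposes each individual short exact sequence $0\to K^{i}\to P^{i}\to K^{i+1}\to 0$ into a minimal part plus a split part, accepts that the decomposition of $K^{i+1}$ as the cokernel of the $i$-th sequence ($C^{i+1}\oplus D^{i+1}$) need not coincide with its decomposition as the kernel of the $(i+1)$-th sequence ($U_1^{i+1}\oplus U_2^{i+1}\oplus V^{i+1}$), and reconciles the two only after applying $F$, where all discrepancies are projective and hence vanish: $F(C^{i+1})\cong F(K^{i+1})\cong F(U_1^{i+1})$. The splicing then proceeds exactly as in your $(\star)_i$ step. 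If you replace your ``minimal coresolution'' construction by this sequence-by-sequence decomposition, the rest of your argument (including the observation that $F'$ sends $\mathscr{P}_A\setminus\mathscr{I}_A$ to $\mathscr{P}_B\setminus\mathscr{I}_B$, so the added summands are projective) goes through; the price is that the final answer is $Q=F'(U_2^0)$ rather than $Q=0$, which the statement permits.
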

\begin{proof}
It follows from $X\in\Omega^{\infty}(A\modcat)$ that there is an exact sequence
$$0\lra X\lra P^0\lraf{f^0} P^1\lraf{f^1} P^2\lraf{f^2} \cdots,$$
where $P^i\in \pmodcat{A}$.
Let $K^i$ be the kernel of $f^i$ for $i\geqslant 0$. Then $K^0=X$ and we have short exact sequences
\begin{align}\label{exact-infinite}
0\lra K^i\lra P^i\lra K^{i+1}\lra 0, \text{ for } i\geqslant 0.
\end{align}
We can decompose the short exact sequences (\ref{exact-infinite}) as the direct sums of the following two short exact sequences
\begin{align}
0\lra U_1^i\oplus U_2^i\lra P_1^i\lra C^{i+1}\lra 0, \text{ for } i\geqslant 0, \mbox{ and}\label{exact-infinite-2}\\
0\lra V^i\lra P_2^i\lra D^{i+1}\lra 0, \text{ for } i\geqslant 0\label{exact-infinite-3}
\end{align}
in $A\modcat$ such that the short exact sequences (\ref{exact-infinite-2}) are minimal, the short exact sequences (\ref{exact-infinite-3}) are split, and $U_1^i\oplus U_2^i\oplus V^i\cong K^i$, $P_1^i\oplus P_2^i\cong P^i$, $C^{i+1}\oplus D^{i+1}\cong K^{i+1}$, $U_1^i\in A\modcat_{\mathscr{P}}$ and $U_2^i\in \pmodcat{A}$. By Lemma \ref{lemma-exact-node} and the minimal short exact sequences (\ref{exact-infinite-2}), we get the following minimal short exact sequences
\begin{align}\label{exact-infinite-4}
0\lra F(U_1^i)\oplus F'(U_2^i)\lra Q_1^i\lra F(C^{i+1})\lra 0, \text{ for } i\geqslant 0,
\end{align}
where $Q_1^i\in \pmodcat{B}$. Since the short exact sequences (\ref{exact-infinite-3}) are split, $V^i$ and $D^{i+1}$ are projrctive for $i\geqslant 0$. Note that $X=K^0\cong U_1^0\oplus U_2^0\oplus V^0$, $U_2^0\in \pmodcat{A}$, $K^i\cong C^i\oplus D^i\cong U_1^i\oplus U_2^i\oplus V^i$, and $U_2^i\in \pmodcat{A}$ for $i\geqslant 1$. Thus $F(X) \cong F(U_1^0)$ and $F(K^i)\cong F(C^i)\cong F(U_1^i)$ as $B$-modules. By the exact sequences (\ref{exact-infinite-4}), we have a long exact sequence
$$0\lra F(X)\oplus F'(U_2^0)\lra Q_1^0\oplus F'(U_2^1)\lra  Q_1^1\oplus F'(U_2^2)\lra Q_1^2 \oplus F'(U_2^3)\lra \cdots.$$
By Lemma \ref{one-to-one}, $F': \mathscr{P}_A\setminus\mathscr{I}_A\ra \mathscr{P}_B\setminus\mathscr{I}_B$ is bijective. Then $F'(U_2^i)\in \pmodcat{B}$ for $i\geqslant 0$. Thus $F(X)\oplus F'(U_2^0)\in \Omega^{\infty}(B\modcat)$.
\end{proof}

\begin{Theo}\label{st-thm-node}
Let $A$ and $B$ be stably equivalent Artin algebras without nodes. Then $$\ed\Omega^{\infty}(A\modcat)=\ed\Omega^{\infty}(B\modcat).$$
\end{Theo}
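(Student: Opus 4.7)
The plan is to mirror the structure of the proof of Theorem \ref{st-thm}, replacing finite syzygies by $\infty$-th syzygies and exploiting two features that are available only when $A$ and $B$ have no nodes: (i) the transport lemma \ref{Lem-node-infinite-tran}, which says that $F$ sends $\Omega^{\infty}(A\modcat)$ into $\Omega^{\infty}(B\modcat)$ up to a projective summand; and (ii) the identification $\bigtriangleup_A=\mathscr{P}_A\setminus\mathscr{I}_A$ (since $\mathfrak{n}_F(A)=\varnothing$), so that by Lemma \ref{one-to-one} the auxiliary module $F'(W)$ produced by Lemma \ref{exact1} actually lies in $\pmodcat{B}$. Since a quasi-inverse of $F$ satisfies the same no-nodes hypothesis, it is enough to prove $\ed\Omega^{\infty}(B\modcat)\leqslant \ed\Omega^{\infty}(A\modcat)$; the reverse inequality then follows by symmetry.

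Set $m:=\ed\Omega^{\infty}(A\modcat)$ and fix $T\in A\modcat$ with $\Omega^{\infty}(A\modcat)\subseteq[T]_{m+1}$. For an arbitrary $M\in\Omega^{\infty}(B\modcat)$ I would first decompose $M\cong M'\oplus M''$ with $M'\in B\modcat_{\mathscr{P}}$ and $M''\in\pmodcat{B}$; since $\Omega^{\infty}(B\modcat)$ is closed under adding projective summands, $M'$ still lies in $\Omega^{\infty}(B\modcat)$. Using the bijection $F:A\modcat_{\mathscr{P}}\to B\modcat_{\mathscr{P}}$, choose $X\in A\modcat_{\mathscr{P}}$ with $F(X)\cong M'$ in $\stmodcat{B}$, that is $F(X)\oplus Q_0\cong M'\oplus Q_0'$ for some $Q_0,Q_0'\in\pmodcat{B}$. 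Applying Lemma \ref{Lem-node-infinite-tran} to the quasi-inverse $F^{-1}$ at $M'$ yields $X\oplus P_0\in\Omega^{\infty}(A\modcat)$ for some $P_0\in\pmodcat{A}$, and therefore $X\oplus P_0\in[T]_{m+1}$.

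Next, expand $X\oplus P_0\in[T]_{m+1}$ into the standard chain of short exact sequences
\begin{equation*}
0\to X_i\to Y_{i-1}\oplus Y_{i-1}'\to Y_i\to 0,\qquad 1\leqslant i\leqslant m,
\end{equation*}
with $Y_0:=X\oplus P_0$, $X_i\in[T]_1$, $Y_i\in[T]_{m-i+1}$ and $Y_m\in[T]_1$, and apply Lemma \ref{exact1} to each of them. This produces in $B\modcat$ a chain of short exact sequences whose left terms lie in $[F(T)\oplus F'(W)]_1$, for an $A$-module $W$ with $\add(W)=\add(\bigtriangleup_A)$, and whose middle terms absorb further projective $B$-summands. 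Concatenating exactly as in the proof of Theorem \ref{st-thm}, one arrives at $F(X\oplus P_0)\in[F(T)\oplus F'(W)]_{m+1}$. The no-nodes hypothesis forces $W$ to be projective over $A$ and hence, via the bijection $F':\bigtriangleup_A\to\bigtriangleup_B\subseteq\pmodcat{B}$, forces $F'(W)\in\pmodcat{B}$. Since $F$ annihilates projective summands as an object map, $F(X\oplus P_0)=F(X)$, and the earlier identity $F(X)\oplus Q_0\cong M'\oplus Q_0'$ together with $M=M'\oplus M''$ and $M''\in\pmodcat{B}$ lets us absorb every projective discrepancy into $B$. Therefore $M\in[F(T)\oplus B]_{m+1}$, whence $\Omega^{\infty}(B\modcat)\subseteq[F(T)\oplus B]_{m+1}$ and $\ed\Omega^{\infty}(B\modcat)\leqslant m$, as required.

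The main obstacle will be the bookkeeping of projective summands: the stable-functor identifications $F(X)\cong M'$ and $F^{-1}(M')\cong X$ hold only modulo $\pmodcat{B}$ and $\pmodcat{A}$ respectively, Lemma \ref{Lem-node-infinite-tran} introduces an auxiliary projective $P_0$, and Lemma \ref{exact1} introduces the parasitic tail $F'(W)$ together with projective middle terms. The no-nodes hypothesis is precisely what guarantees that $F'(W)$ lands in $\pmodcat{B}$, so that a single enlargement of the tester from $F(T)$ to $F(T)\oplus B$ absorbs all of these discrepancies uniformly and closes the argument.
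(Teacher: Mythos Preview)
Your proof is correct and follows essentially the same route as the paper's: transport $M\in\Omega^{\infty}(B\modcat)$ back to $\Omega^{\infty}(A\modcat)$ via Lemma~\ref{Lem-node-infinite-tran}, unfold the $[T]_{m+1}$-filtration there, and push it forward with Lemma~\ref{exact1}. Two small remarks. First, your sentence ``since $\Omega^{\infty}(B\modcat)$ is closed under adding projective summands, $M'$ still lies in $\Omega^{\infty}(B\modcat)$'' is logically backwards---you need closure under \emph{removing} a projective summand; this is true but requires a short argument, and in any case the detour is avoidable: the paper simply applies Lemma~\ref{Lem-node-infinite-tran} to $M$ itself and sets $X:=F^{-1}(M)\oplus P$. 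Second, your observation that the no-nodes hypothesis forces $F'(W)\in\pmodcat{B}$ (via $\bigtriangleup_A=\mathscr{P}_A\setminus\mathscr{I}_A$ and Lemma~\ref{one-to-one}) is a genuine simplification the paper does not record; the paper is content with the generator $F(T)\oplus F'(W)\oplus B$, whereas your argument shows $F(T)\oplus B$ already suffices.
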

\begin{proof}
Let $M\in \Omega^{{\infty}}(B\modcat)$. For the stable functor $F^{-1}:\stmodcat{B}\ra \stmodcat{A}$, we get that $F^{-1}(M)\oplus P\in \Omega^{{\infty}}(A\modcat)$ for some projective $A$-module $P$.

Suppose that $\ed\Omega^{\infty}(A\modcat)=m.$
Then we can set
$\ed\Omega^{\infty}(A\modcat)\subseteq [T]_{m+1}$ for some $A$-module $T$ by Definition \ref{def-subcat-extension-dim}.
Define $X:=F^{-1}(M)\oplus P$. It follows from $X\in \Omega^{{\infty}}(A\modcat)$ that there are the short exact sequences
\begin{equation}\label{exact-infinite-1}
\begin{cases}
\xymatrix@C=1.5em@R=0.1em{
0\ar[r]& X_{1}\ar[r] & X\oplus X'\ar[r]& Y_{1}\ar[r]&0,\\
0\ar[r]& X_{2}\ar[r] & Y_{1}\oplus Y_{1}'\ar[r]& Y_{2}\ar[r]&0,\\
&&\vdots&&\\
0\ar[r]& X_{m}\ar[r] & Y_{m-1}\oplus Y_{m-1}'\ar[r]& Y_{m}\ar[r]&0.
}
\end{cases}
\end{equation}
where $X_i\in [T]_1$ and $Y_i\in [T]_{m+1-i}$ for $1\leqslant i\leqslant m$.
Let $W$ be an $A$-module with $\add(W)=\add(\bigtriangleup_A).$
Similar to the proof process in Theorem \ref{st-thm}, we deduce
$F(X)\in[F(T)\oplus F'(W)]_{m+1}$. Thus $M\in[F(T)\oplus F'(W)\oplus B]_{m+1}$ and
$$\Omega^{\infty}(B\modcat)\subseteq[F(T)\oplus F'(W)\oplus B]_{m+1}.$$
By Definition \ref{def-subcat-extension-dim}, we have $$\ed \Omega^{\infty}(B\modcat) \leqslant m=\ed \Omega^{\infty}(A\modcat).$$
Similarly, we also can get
$$\ed \Omega^{\infty}(A\modcat) \leqslant \ed \Omega^{\infty}(B\modcat).$$
Moreover, we get $$\ed \Omega^{\infty}(A\modcat) = \ed \Omega^{\infty}(B\modcat).$$
\end{proof}

\section{Separable equivalences}

In this section, we will prove that the extension dimension of the $i$-th syzygy module categories is an invariant under separable equivalence, for each $i\in \mathbb{N}$ or $i=\infty$. We begin by recalling the definition of separable equivalence of Artin algebras (see \cite{Linckelmann11}), which includes the derived equivalence of self-injective algebras, Morita equivalence,
stable equivalence of Morita type (\cite{Pea2017}) and  singular equivalences of Morita type (\cite{zhou2013}).

\begin{Def}{\rm (\cite{Linckelmann11})}
\label{def-4.4}
{\rm Two Artin algebras $A$ and $\Gamma$ are called {\bf separably equivalent} if there exist $_{B}M_{A}$
and $_{B}N_{A}$ such that
\begin{itemize}
\item[(1)] $M$ and $N$ are both finitely generated projective as one sided modules;
\item[(2)] $M\otimes_{A}N\cong B\oplus U$ as a $(B,B)$-bimodule for some $_{B}U_{B}$;
\item[(3)] $N\otimes_{B}M\cong A\oplus V$ as a $(A,A)$-bimodule for some $_{A}V_{A}$.
\end{itemize}}
\end{Def}

\begin{Lem}{\rm (\cite[Lemma 2.4]{zheng2020})} \label{exact-functor}
Let $A$ and $B$ be Artin algebras. If the functor $F:A\modcat\to B\modcat$ is exact functor, then
we have $F([T]_{n})\subseteq [F(T)]_{n}$ for each module $T\in A\modcat$ and each integer $n\geqslant 0.$
\end{Lem}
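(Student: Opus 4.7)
The plan is to prove Lemma \ref{exact-functor} by induction on $n$, using the recursive definition $[T]_n = [T]_1 \bullet [T]_{n-1}$ given in Section 2. Throughout, I will use only two properties of $F$: that it is additive (so it preserves direct sums and direct summands of modules) and that it is exact (so it carries short exact sequences to short exact sequences).

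For the base cases, $n=0$ gives $F([T]_0) = F(\{0\}) = \{0\} = [F(T)]_0$ trivially, and $n=1$ gives $[T]_1 = \add(T)$; since any $X \in \add(T)$ is a direct summand of $T^{\oplus k}$ for some $k$, additivity of $F$ yields $F(X) \in \add(F(T)) = [F(T)]_1$.

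For the inductive step, assume the inclusion $F([T]_{n-1}) \subseteq [F(T)]_{n-1}$ holds, and let $X \in [T]_n$. Then by the definition of $\bullet$, there exist $X' \in A\modcat$, $T_1 \in [T]_1$ and $T' \in [T]_{n-1}$ fitting into an exact sequence
\begin{equation*}
0 \lra T_1 \lra X \oplus X' \lra T' \lra 0.
\end{equation*}
Applying the exact functor $F$ produces an exact sequence
\begin{equation*}
0 \lra F(T_1) \lra F(X) \oplus F(X') \lra F(T') \lra 0,
\end{equation*}
in which $F(T_1) \in [F(T)]_1$ by the $n=1$ case and $F(T') \in [F(T)]_{n-1}$ by the induction hypothesis. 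Hence $F(X) \oplus F(X') \in [F(T)]_1 \bullet [F(T)]_{n-1} = [F(T)]_n$, and since $[F(T)]_n$ is closed under direct summands, $F(X) \in [F(T)]_n$, completing the induction.

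There is no real obstacle here: the argument is purely formal and relies only on the compatibility of exact additive functors with the $\bullet$ operation. The only point requiring mild care is the handling of the extra summand $X'$ in the definition of $[T]_1 \bullet [T]_{n-1}$ (recall the subcategory-membership up to direct summand), which is dispatched by invoking closure of $[F(T)]_n$ under summands.
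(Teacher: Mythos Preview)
Your proof is correct. The paper does not supply its own proof of this lemma but simply quotes it from \cite[Lemma 2.4]{zheng2020}; your induction on $n$ using the recursive description $[T]_n=[T]_1\bullet[T]_{n-1}$ is exactly the standard argument one would expect, and the only delicate point---that an exact functor between abelian categories is automatically additive, so that $F$ preserves finite direct sums and summands---is implicit in your base case and is unproblematic.
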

\begin{Lem}{\rm (\cite[Theorem 4.5]{zheng2020})}\label{separable-theorem1-1}
Let $A$ and $B$ be Artin algebras. If they are separably equivalent,
then $\ed A\modcat=\ed B\modcat$.
\end{Lem}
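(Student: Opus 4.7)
The plan is to exploit the defining bimodules of the separable equivalence together with Lemma \ref{exact-functor}. Write $_BM_A$ and $_AN_B$ for the bimodules witnessing the separable equivalence (so $M\otimes_A N\cong B\oplus U$ and $N\otimes_B M\cong A\oplus V$), and consider the tensor product functors $F:=M\otimes_A-:A\modcat\ra B\modcat$ and $G:=N\otimes_B-:B\modcat\ra A\modcat$. Because $M$ is projective as a right $A$-module and $N$ is projective as a right $B$-module, both $F$ and $G$ are exact functors. The decomposition of the bimodules yields natural isomorphisms of additive functors
$$FG\cong {\rm Id}_{B\modcat}\oplus(U\otimes_B-)\quad\mbox{and}\quad GF\cong {\rm Id}_{A\modcat}\oplus(V\otimes_A-).$$
In particular, every $B$-module $X$ is a direct summand of $FG(X)$, and every $A$-module $Y$ is a direct summand of $GF(Y)$.

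The main step will be to prove $\ed B\modcat\leqslant \ed A\modcat$; the reverse inequality is then immediate by symmetry. Suppose $\ed A\modcat=n$, so there exists $T\in A\modcat$ with $A\modcat\subseteq [T]_{n+1}$ by Definition \ref{ed-def}. For an arbitrary $X\in B\modcat$, the module $G(X)=N\otimes_B X$ lies in $A\modcat\subseteq[T]_{n+1}$. Since $F$ is exact, Lemma \ref{exact-functor} gives
$$FG(X)\in F([T]_{n+1})\subseteq [F(T)]_{n+1}.$$
Because $X$ is a direct summand of $FG(X)$ and each class $[F(T)]_{n+1}$ is closed under direct summands (this is built into the definition of $\bullet$ via $\add$), we conclude $X\in[F(T)]_{n+1}$. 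Hence $B\modcat\subseteq [F(T)]_{n+1}$ and, by Definition \ref{ed-def}, $\ed B\modcat\leqslant n=\ed A\modcat$.

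The symmetric argument using the exact functor $G$ applied to any $F(X')\in[T']_{n'+1}$ and the isomorphism $GF\cong {\rm Id}_{A\modcat}\oplus (V\otimes_A-)$ gives $\ed A\modcat\leqslant \ed B\modcat$. Combining the two inequalities yields the desired equality $\ed A\modcat=\ed B\modcat$.

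The argument is short and essentially formal; I do not expect a genuine obstacle. The only delicate point to get right is the sidedness of the projectivity assumption on $M$ and $N$: exactness of $F=M\otimes_A-$ requires $M_A$ to be (flat, hence) projective, and likewise for $G$. Once that is recorded, the proof reduces to combining Lemma \ref{exact-functor} with the fact that the retraction $X\hookrightarrow FG(X)\twoheadrightarrow X$ forces $X$ to inherit membership in the $\add$-closed class $[F(T)]_{n+1}$.
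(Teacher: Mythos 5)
Your proof is correct and follows essentially the same route as the source: the paper itself only cites this lemma from \cite[Theorem 4.5]{zheng2020}, but the argument you give (apply one exact tensor functor, push the class $[T]_{n+1}$ through via Lemma \ref{exact-functor}, then recover $X$ as a direct summand using $N\otimes_B M\cong A\oplus V$) is exactly the mechanism used in the proof of Theorem \ref{sep-thm} for the syzygy categories. The only cosmetic slip is the parenthetical ``(flat, hence) projective,'' which should read that projectivity of $M_A$ and $N_B$ gives the flatness needed for exactness.
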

\begin{Theo}\label{sep-thm}
Let $A$ and $B$ be Artin algebras. If they are separably equivalent,
then $$\ed \Omega^{i}(A\modcat)=\ed \Omega^{i}(B\modcat)$$ for each $i\in \mathbb{N}\cup \{\infty\}.$
\end{Theo}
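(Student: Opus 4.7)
The plan is to lift the separable-equivalence argument used in Lemma \ref{separable-theorem1-1} (the $i=0$ case from \cite{zheng2020}) to the level of syzygy subcategories. From the data in Definition \ref{def-4.4} I form the two functors
$$F := M\otimes_A - : A\modcat \lra B\modcat, \qquad G := N\otimes_B - : B\modcat \lra A\modcat.$$
First I would record two properties of this pair. (i) Since $M$ and $N$ are projective as one-sided modules, both $F$ and $G$ are exact and preserve finitely generated modules; since $M$ is a projective left $B$-module and $F(A) = M$, the functor $F$ sends $\add({_AA})$ into $\add({_BB})$, and similarly for $G$. (ii) From $N\otimes_B M \cong A\oplus V$ as $(A,A)$-bimodules one obtains a natural isomorphism $GF(X)\cong X \oplus (V\otimes_A X)$ for every $X\in A\modcat$, so $X$ is always a direct summand of $GF(X)$; symmetrically every $Y\in B\modcat$ is a summand of $FG(Y)$.

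Next I would verify that $F$ and $G$ preserve syzygy subcategories for every $i\in\mathbb{N}\cup\{\infty\}$. Given $K\in\Omega^i(A\modcat)$ witnessed by an exact sequence $0\to K\to P^0\to \cdots \to P^{i-1}\to M\to 0$ (or an infinite one when $i=\infty$) with $P^j$ projective, applying the exact functor $F$ yields an exact sequence of the same shape in $B\modcat$ in which each $F(P^j)$ is projective by (i). Hence $F(\Omega^i(A\modcat))\subseteq \Omega^i(B\modcat)$, and symmetrically for $G$.

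Now fix $i\in\mathbb{N}\cup\{\infty\}$ and set $n := \ed\Omega^i(B\modcat)$, so $\Omega^i(B\modcat)\subseteq [T]_{n+1}$ for some $T\in B\modcat$ by Definition \ref{def-subcat-extension-dim}. For every $Y\in\Omega^i(A\modcat)$ the preservation property gives $F(Y)\in[T]_{n+1}$, and applying the exact functor $G$ together with Lemma \ref{exact-functor} gives $GF(Y)\in [G(T)]_{n+1}$. Because $[G(T)]_{n+1}$ is closed under direct summands and $Y$ is a summand of $GF(Y)$ by (ii), we obtain $Y\in[G(T)]_{n+1}$. Therefore $\Omega^i(A\modcat)\subseteq [G(T)]_{n+1}$ and $\ed\Omega^i(A\modcat)\leqslant n$. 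Swapping the roles of $A$ and $B$ (using $FG(Y)\cong Y\oplus(U\otimes_B Y)$) yields the reverse inequality, and equality follows.

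I do not anticipate a genuine obstacle; the whole argument reduces to combining the summand trick from \cite{zheng2020} with the closure of $[G(T)]_{n+1}$ under direct summands. The only point requiring care is the preservation statement $F(\Omega^i(A\modcat))\subseteq\Omega^i(B\modcat)$, and in particular the case $i=\infty$, which is handled uniformly thanks to the exactness of $F$ and the fact that $F$ carries finitely generated projectives to finitely generated projectives.
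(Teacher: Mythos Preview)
Your proposal is correct and follows essentially the same route as the paper: apply $M\otimes_A-$ to push $\Omega^i(A\modcat)$ into $\Omega^i(B\modcat)\subseteq[T]_{n+1}$, then apply the exact functor $N\otimes_B-$ together with Lemma \ref{exact-functor}, and finally use the bimodule splitting $N\otimes_B M\cong A\oplus V$ to extract $X$ as a summand. The only cosmetic difference is that you spell out the summand step explicitly via closure of $[G(T)]_{n+1}$ under direct summands, whereas the paper compresses this into the single line ``By Definition \ref{def-4.4}(3), $X\in[N\otimes_B T]_{n+1}$.''
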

\begin{proof}
Let $M$ and $N$ be as in Definition \ref{def-4.4}.
The case $i=0$ can be seen Lemma \ref{separable-theorem1-1}.
Now consider the case $i>0$ or $i=\infty$. Let $\ed\Omega^{i}(B\modcat)=n$.
Then there exists $T\in B\modcat$
such that $\Omega^{i}(B\modcat)\subseteq[T]_{n+1}$by Definition \ref{def-subcat-extension-dim}.
Let $X\in \Omega^{i}(A\modcat)$.
Consider the following exact sequences
\begin{align}
0\ra X\ra P^0\ra P^1\ra P^2\ra \cdots,&\quad \text{if } i=\infty\label{exact-x1}\\
0\ra X\ra P^0\ra P^1\ra  \cdots\ra P^{i-1},&\quad \text{if } i<\infty\label{exact-x2}
\end{align}
where $P^{j}\in \pmodcat{A}$ for each $j$.
Applying the functor $_{B}M\otimes_{A}-$
to the above exact sequence (\ref{exact-x1}) or (\ref{exact-x2}), we can get the following exact
sequences
\begin{align}
0\ra M\otimes_{A}X\ra M\otimes_{A}P^0\ra M\otimes_{A}P^1\ra M\otimes_{A}P^2\ra \cdots,&\quad \text{if } i=\infty\label{exact-x3}\\
0\ra M\otimes_{A}X\ra M\otimes_{A}P^0\ra M\otimes_{A}P^1\ra  \cdots\ra M\otimes_{A}P^{i-1},&\quad \text{if } i<\infty\label{exact-x4}
\end{align}
in $B\modcat$, where  $M\otimes_{A}P^j\in\pmodcat{B}$
since $_{A}P^{j}$ and $_{B}M$ are projective. Moreover, we have
$$M\otimes_{A}X\in \Omega^{i}(B\modcat)\subseteq [T]_{n+1}$$
in $B\modcat$.
Since $N_{B}$ is projective, we obtain that the
functor $N\otimes_{B}-:B\modcat \ra A\modcat$ is exact. By Lemma \ref{exact-functor}, we have
$$N\otimes_{B}M\otimes_{A}X\in  [N\otimes_{B}T]_{n+1}.$$
By Definition \ref{def-4.4}(3), $X\in  [N\otimes_{B}T]_{n+1}$. Then $\Omega^{i}(A\modcat)\subseteq [N\otimes_{B}T]_{n+1}$ and $\ed \Omega^{i}(A\modcat)\leqslant n=\ed \Omega^{i}(B\modcat)$. Symmetrically, we have $\ed \Omega^{i}(B\modcat) \leqslant \ed \Omega^{i}(A\modcat)$.
Moreover, we get $\ed \Omega^{i}(B\modcat)= \ed \Omega^{i}(A\modcat)$.
\end{proof}

\begin{Koro}
Let $G$ be a finite group and $k$ a field of characteristic $p>0$. If $P$ is a Sylow $p$-subgroup of $G$,
then $$\ed \Omega^{i}(kP\modcat)=\ed \Omega^{i}(kG\modcat)$$ for each $i\in \mathbb{N}\cup\{\infty\}.$
\end{Koro}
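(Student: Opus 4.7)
The plan is to reduce this corollary to Theorem \ref{sep-thm}: since $G$ and $P$ are finite, both $kG$ and $kP$ are finite-dimensional $k$-algebras, hence Artin $k$-algebras, and it suffices to exhibit a separable equivalence between them. All that is needed beyond Theorem \ref{sep-thm} is the classical fact that $kG$ and $kP$ are separably equivalent whenever $P$ is a Sylow $p$-subgroup, which I would verify by producing explicit bimodules in the sense of Definition \ref{def-4.4}.

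For the construction, I would set $A = kG$, $B = kP$, and take $M := kG$ equipped with its natural $(kP, kG)$-bimodule structure (the left $kP$-action obtained by restriction) together with $N := kG$ carrying its natural $(kG, kP)$-bimodule structure (the right $kP$-action by restriction). Because $P$ is a subgroup of $G$, $kG$ is free on each side as a $kP$-module, so condition $(1)$ of Definition \ref{def-4.4} is immediate. For condition $(2)$, the $(kP,kP)$-bimodule $M \otimes_{kG} N \cong kG$ decomposes according to the double-coset partition $G = \bigsqcup_{g \in P \backslash G / P} PgP$ into a direct sum of $(kP,kP)$-bimodules, and the identity double coset contributes a summand isomorphic to $kP$; this works regardless of the Sylow hypothesis.

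The heart of the argument is condition $(3)$, which asserts that $kG$ is a $(kG,kG)$-bimodule summand of $N \otimes_{kP} M = kG \otimes_{kP} kG$. Here the Sylow hypothesis enters decisively via Higman's criterion: since $P$ is a Sylow $p$-subgroup of $G$, the index $n := [G : P]$ is coprime to $p = \mathrm{char}(k)$ and hence invertible in $k$. Choosing left coset representatives $g_1, \dots, g_n$ for $G / P$, I would define $\sigma : kG \to kG \otimes_{kP} kG$ by $\sigma(g) = n^{-1} \sum_{i=1}^n g g_i \otimes g_i^{-1}$ and check the two routine properties: independence of the choice of coset representatives (using $g_i p \otimes p^{-1} g_i^{-1} = g_i \otimes g_i^{-1}$ for $p \in P$ inside the tensor product over $kP$) and that $\sigma$ is a $(kG,kG)$-bimodule homomorphism splitting the multiplication map $\mu : kG \otimes_{kP} kG \to kG$. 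This exhibits $kG$ as a direct summand of $kG \otimes_{kP} kG$ and completes the verification of separable equivalence.

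With separable equivalence of $kG$ and $kP$ in hand, Theorem \ref{sep-thm} applies uniformly for every $i \in \mathbb{N} \cup \{\infty\}$ and yields the claimed equality. The only potentially subtle step is the verification of the splitting $\sigma$, but this is the classical Higman lemma and presents no genuine obstacle in this setting; the corollary is therefore essentially a direct specialization of Theorem \ref{sep-thm}.
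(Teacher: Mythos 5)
Your proposal is correct and follows the same route as the paper: reduce to Theorem \ref{sep-thm} via the separable equivalence of $kG$ and $kP$. The only difference is that the paper simply cites \cite{Pea2017} for this separable equivalence, whereas you verify it directly with the bimodules ${}_{kP}kG_{kG}$ and ${}_{kG}kG_{kP}$, the double-coset decomposition, and the Higman averaging map $\sigma$ (using that $[G:P]$ is invertible in $k$); your verification is sound.
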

\begin{proof}
It follows from \cite{Pea2017} that $kP$ and $kG$ are separably equivalent. Then the statement of the Corollary follows from Theorem \ref{sep-thm}.
\end{proof}

\begin{Koro}\label{stable-Morita1}
Let $A$ and $B$ be Artin algebras. If they are stably equivalent of Morita type or singular equivalences of Morita type,
then $$\ed \Omega^{i}(A\modcat)=\ed \Omega^{i}(B\modcat)$$ for each $i\in \mathbb{N}\cup\{\infty\}.$
\end{Koro}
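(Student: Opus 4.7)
The plan is to observe that this is essentially an immediate consequence of Theorem \ref{sep-thm}, and that the only real work is checking the (well-known) fact that stable equivalences of Morita type and singular equivalences of Morita type are both special cases of separable equivalences in the sense of Definition \ref{def-4.4}.

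First, I would recall the definitions. Two algebras $A$ and $B$ are stably equivalent of Morita type if there exist bimodules $_BM_A$ and $_AN_B$ that are projective as one-sided modules, together with isomorphisms $M\otimes_A N\cong B\oplus U$ as $(B,B)$-bimodules and $N\otimes_B M\cong A\oplus V$ as $(A,A)$-bimodules, with $U$ and $V$ projective bimodules. Singular equivalences of Morita type have the same shape of definition, the only difference being that $U$ and $V$ are required to have finite projective dimension as bimodules rather than being projective (see \cite{zhou2013}). In either case, the defining isomorphisms $M\otimes_A N\cong B\oplus U$ and $N\otimes_B M\cong A\oplus V$ and the one-sided projectivity of $M$ and $N$ are precisely the three conditions listed in Definition \ref{def-4.4}.

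Hence in both cases $A$ and $B$ are separably equivalent (no additional hypothesis on $U$ or $V$ is needed, since separable equivalence makes no restriction on these pieces beyond their existence). Applying Theorem \ref{sep-thm} then yields
\begin{equation*}
\ed\Omega^{i}(A\modcat)=\ed\Omega^{i}(B\modcat)\quad\text{for every }i\in\mathbb{N}\cup\{\infty\},
\end{equation*}
which is the desired conclusion.

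There is no real obstacle here: the whole corollary is a dictionary unwrapping, and Theorem \ref{sep-thm} does all the homological work. The only step one could conceivably quibble about is the reminder that stable/singular equivalences of Morita type are literally separable equivalences, but this is already recorded in the paper's introduction to Section 5 (referring to \cite{Pea2017} and \cite{zhou2013}), so the proof can simply cite those sources before invoking Theorem \ref{sep-thm}.
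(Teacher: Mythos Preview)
Your proposal is correct and matches the paper's approach exactly: the paper states this corollary without proof, relying on the remark at the start of Section~5 that separable equivalence includes stable equivalence of Morita type (\cite{Pea2017}) and singular equivalences of Morita type (\cite{zhou2013}), so the result follows immediately from Theorem~\ref{sep-thm}.
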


Recall that a derived equivalence $F$ between finite dimensional algebras $A$ and $B$ with a quasi-inverse $G$ is called \emph{almost $\nu$-stable} \cite{hx10} if the associated radical tilting complexes $\cpx{T}$ over $A$ and $\cpx{\bar{T}}$ over $B$ are of the form
$$ \cpx{T}: 0\lra T^{-n}\lra \cdots T^{-1}\lra T^0\lra 0 \; \mbox{ and } \,
\cpx{\bar{T}}: 0\lra \bar{T}^{0}\lra \bar{T}^1\lra \cdots \lra \bar{T}^{n}\lra 0,$$
respectively, such that $\add(\bigoplus_{i=1}^n T^{-i})=\add(\nu_A(\bigoplus_{i=1}^n T^{-i}))$ and $\add(\bigoplus_{i=1}^n \bar{T}^{i}) = \add(\nu_B(\bigoplus_{i=1}^n \bar{T}^{i}))$, where $\nu$ is the Nakayama functor. By \cite[Theorem 1.1(2)]{hx10}, almost $\nu$-stable derived equivalences induce special stable equivalences, namely stable equivalences of Morita type. Thus we have the following consequence of Corollary \ref{stable-Morita1}.

\begin{Koro}\label{nu-s}
Let $A$ and $B$ be almost $\nu$-stable derived equivalent finite dimensional algebras. Then $$\ed\Omega^{i}(A\modcat)=\ed\Omega^{i}(B\modcat)$$ for each $i\in \mathbb{N}\cup\{\infty\}.$
\end{Koro}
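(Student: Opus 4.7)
The plan is to invoke the two structural results already assembled in the excerpt and chain them together. Specifically, by the cited result \cite[Theorem 1.1(2)]{hx10}, an almost $\nu$-stable derived equivalence between finite dimensional algebras $A$ and $B$ gives rise to a stable equivalence of Morita type between $A$ and $B$. Once this reduction is made, the corollary follows immediately from Corollary \ref{stable-Morita1}, which was itself deduced from Theorem \ref{sep-thm} by observing that stable equivalences of Morita type are a special case of separable equivalences.

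Concretely, I would organize the short proof as follows. First, I would note the hypothesis: there is an almost $\nu$-stable derived equivalence $F:\Db{A}\lraf{\sim}\Db{B}$ with associated radical tilting complexes $\cpx{T}$ over $A$ and $\cpx{\bar T}$ over $B$ as displayed in the definition preceding the corollary. Second, I would invoke \cite[Theorem 1.1(2)]{hx10} to upgrade this into a stable equivalence of Morita type between $A$ and $B$. Finally, I would cite Corollary \ref{stable-Morita1} applied to this stable equivalence of Morita type to conclude $\ed\Omega^{i}(A\modcat)=\ed\Omega^{i}(B\modcat)$ for each $i\in\mathbb{N}\cup\{\infty\}$.

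Since each implication used is already packaged as a theorem or corollary in the excerpt, there is no substantial obstacle to overcome; the proof is a one-line application of the two preceding results. The only thing worth flagging is that the existence of the two bimodules required in Definition \ref{def-4.4} for a stable equivalence of Morita type (and hence for a separable equivalence) is exactly what \cite[Theorem 1.1(2)]{hx10} provides from the almost $\nu$-stable assumption, so no extra construction is needed here.
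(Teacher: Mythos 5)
Your proposal is correct and matches the paper's argument exactly: the corollary is stated as a consequence of Corollary \ref{stable-Morita1} precisely via \cite[Theorem 1.1(2)]{hx10}, which converts the almost $\nu$-stable derived equivalence into a stable equivalence of Morita type. Nothing further is needed.
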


Recall that given a finite dimensional algebra $A$ over a filed $k$, $A\ltimes D(A)$, the trivial extension of $A$ by $D(A)$ is the $k$-algebra whose underlying $k$-space is $A\oplus D(A)$, with multiplication given by

$$ (a,f)(b, g)= (ab,fb+ag)$$ for $a,b\in A$, and $f,g\in D(A)$, where $D:=\Hom_{k}(-,k)$. It is known that $A\ltimes D(A)$ is always symmetric, and therefore it is selfinjective.

\begin{Koro}
Let $A$ and $B$ be derived equivalent finite dimensional algebras. Then $$\ed\Omega^{i}(A\ltimes D(A)\modcat)=\ed\Omega^{i}(B\ltimes D(B)\modcat),$$
for each $i\in \mathbb{N}\cup\{\infty\}.$
\end{Koro}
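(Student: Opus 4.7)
The plan is to reduce the statement to Corollary \ref{nu-s} by exhibiting the derived equivalence between the trivial extensions as an almost $\nu$-stable one. First, I would invoke Rickard's classical transfer theorem, which asserts that if $A$ and $B$ are derived equivalent finite dimensional $k$-algebras, then the trivial extensions $T(A):=A\ltimes D(A)$ and $T(B):=B\ltimes D(B)$ are derived equivalent as well. Concretely, a tilting complex $\cpx{T}$ over $A$ realizing the equivalence lifts to a tilting complex $\cpx{\widetilde T}$ over $T(A)$ whose endomorphism algebra is $T(B)$; this is standard.

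Next, I would observe that $T(A)$ and $T(B)$ are symmetric $k$-algebras, hence self-injective, and in particular their Nakayama functors are isomorphic to the identity on projectives: $\nu_{T(A)}\cong \mathrm{Id}$ on $\pmodcat{T(A)}$, and similarly for $T(B)$. Consequently, for any radical tilting complex
$$\cpx{T}:0\lra T^{-n}\lra\cdots\lra T^{-1}\lra T^0\lra 0$$
over $T(A)$ (and its counterpart $\cpx{\bar T}$ over $T(B)$ given by Lemma \ref{der-lem}), the defining conditions
$$\add\Bigl(\bigoplus_{i=1}^n T^{-i}\Bigr)=\add\Bigl(\nu_{T(A)}\bigl(\bigoplus_{i=1}^n T^{-i}\bigr)\Bigr),\qquad \add\Bigl(\bigoplus_{i=1}^n \bar T^{i}\Bigr)=\add\Bigl(\nu_{T(B)}\bigl(\bigoplus_{i=1}^n \bar T^{i}\bigr)\Bigr)$$
of an almost $\nu$-stable derived equivalence are automatically satisfied. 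Hence the induced derived equivalence $\Db{T(A)}\xrightarrow{\sim}\Db{T(B)}$ is almost $\nu$-stable.

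Finally, I would apply Corollary \ref{nu-s} directly to this almost $\nu$-stable derived equivalence to conclude
$$\ed\Omega^{i}(T(A)\modcat)=\ed\Omega^{i}(T(B)\modcat)$$
for every $i\in\mathbb{N}\cup\{\infty\}$, which is exactly the desired equality.

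The only non-routine point is the lifting step: ensuring that a derived equivalence between $A$ and $B$ actually produces one between $T(A)$ and $T(B)$. In a self-contained treatment one would either cite Rickard's result on trivial extensions, or argue explicitly that $T(A)\cong A\otimes_k T(k)$-modules can be described via the repetitive algebra $\hat A$ and that a tilting complex over $A$ lifts to one over $T(A)$ with the correct endomorphism ring. Once that lift is in hand, the symmetry of the trivial extensions makes the almost $\nu$-stability free, and the rest of the proof is a one-line appeal to Corollary \ref{nu-s}.
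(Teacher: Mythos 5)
Your proposal is correct and follows essentially the same route as the paper: Rickard's theorem that a derived equivalence between $A$ and $B$ lifts to one between the trivial extensions, the observation that the trivial extensions are symmetric (hence self-injective), and then an appeal to Corollary \ref{nu-s}. The only difference is that where the paper cites \cite[Proposition 3.8]{hx10} for the fact that a derived equivalence between self-injective algebras induces an almost $\nu$-stable one, you prove that step directly by noting $\nu\cong\mathrm{Id}$ on projectives over a symmetric algebra, which is a legitimate unpacking of the same fact.
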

\begin{proof}
By a result of Rickard (see \cite[Theorem 3.1]{r89}), which says that any derived equivalence between two algebras induces a derived equivalence between their trivial extension algebras, we obtain that $A\ltimes D(A)$ and $B\ltimes D(B)$ are derived equivalent.  It follows from \cite[Proposition 3.8]{hx10} that every derived equivalence between two selfinjective algebras induces an almost $\nu$-stable derived equivalence. Thus we have $\ed\Omega^{i}(A\ltimes D(A)\modcat)=\ed\Omega^{i}(B\ltimes D(B)\modcat)$ by Corollary \ref{nu-s}.
\end{proof}

\begin{Koro}\label{stable-Morita2}
Let $A$ be a self-injective, then for
any $A$-module $X$ and $n\in\mathbb{Z}$, we have
$$\ed \Omega^{i}(\End_{A}(A\oplus X)\modcat)=\ed \Omega^{i}(\End_{A}(A\oplus \Omega^{n}(X))\modcat)$$
and
$$\ed \Omega^{i}(\End_{A}(A\oplus X)\modcat)=\ed \Omega^{i}(\End_{A}(A\oplus \tau_{A}^{n}(X))\modcat)$$
for each integer $i\in \mathbb{N}\cup\{\infty\},$ where $\tau$ stands for the Auslander–Reiten translation.
\end{Koro}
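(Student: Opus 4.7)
The plan is to reduce both equalities to Corollary \ref{nu-s} by establishing a single key fact: for every self-injective algebra $A$ and every $A$-module $Y$, the algebras $\End_A(A\oplus Y)$ and $\End_A(A\oplus \Omega_A Y)$ are almost $\nu$-stable derived equivalent. Granting this, iterating with $Y$ successively equal to $X,\Omega_A X,\ldots$ gives the desired derived equivalence of $\End_A(A\oplus X)$ and $\End_A(A\oplus \Omega_A^n X)$ for $n\geqslant 0$; the same key step applied with $Y=\Omega_A^{-1}X,\Omega_A^{-2}X,\ldots$ (cosyzygies exist on $A\modcat$ because injective equals projective for self-injective $A$) yields the symmetric chain for $n<0$. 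Corollary \ref{nu-s} then gives the first displayed equality for every $i\in\mathbb{N}\cup\{\infty\}$.

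\textbf{The tilting complex.} For the key fact, set $M:=A\oplus Y$ and $B:=\End_A(M)$, and let $\pi:P\ra Y$ be a projective cover with kernel $\Omega_A Y$; since $A$ is self-injective, $P\in\add(A)$. The plan is to consider in $\Kb{\pmodcat{B}}$ the complex
\begin{equation*}
\bar{T}^\bullet \,:=\, \Hom_A(M,A)\, \oplus\, \bigl(\Hom_A(M,P)\lraf{\pi_*}\Hom_A(M,Y)\bigr),
\end{equation*}
with $\Hom_A(M,Y)$ placed in degree $0$. This is essentially the tilting complex constructed by Hu--Xi in \cite[Section 3]{hx10}; their analysis shows that $\bar{T}^\bullet$ is a tilting complex over $B$ with $\End_{\Db{B}}(\bar{T}^\bullet)\cong \End_A(A\oplus \Omega_A Y)$, and that the induced derived equivalence is almost $\nu$-stable --- the self-injectivity of $A$ being precisely what makes the negative-degree term $\Hom_A(M,P)\in\add\Hom_A(M,A)$ behave correctly under $\nu_B$.

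\textbf{The $\tau_A$-case.} For self-injective $A$, the Nakayama functor $\nu_A=D\Hom_A(-,A)$ is an exact auto-equivalence of $A\modcat$ that permutes the isomorphism classes of indecomposable projectives; in particular $\add(\nu_A A\oplus \nu_A X)=\add(A\oplus \nu_A X)$, so $\End_A(A\oplus X)\cong \End_A(\nu_A(A\oplus X))$ is Morita equivalent (hence almost $\nu$-stable derived equivalent) to $\End_A(A\oplus \nu_A X)$. Combining this Morita step with two applications of the syzygy step, together with the identity $\tau_A\cong \Omega_A^2\nu_A$ on $A\modcat$ (projective summands being harmless because $A$ is already a direct summand of $M$), produces the $\tau_A$-equality for $n=1$; iterating then covers all $n\in\mathbb{Z}$. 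The main technical obstacle is verifying that the complex $\bar{T}^\bullet$ above is indeed an almost $\nu$-stable tilting complex over $B$; everything else is bookkeeping on top of Corollary \ref{nu-s}.
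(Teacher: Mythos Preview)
Your argument is correct, but it takes a more elaborate route than the paper. The paper's proof is a two-line citation: Liu and Xi \cite[Corollary 3.4]{liu2007} already show that for self-injective $A$ the algebras $\End_A(A\oplus X)$ and $\End_A(A\oplus \Omega^n X)$ (respectively $\End_A(A\oplus \tau_A^n X)$) are stably equivalent of Morita type for every $n\in\mathbb{Z}$, and then Corollary~\ref{stable-Morita1} applies directly. You instead reconstruct this stable equivalence of Morita type by hand, building an almost $\nu$-stable derived equivalence via an explicit two-term tilting complex in the style of \cite{hx10}, and then invoke Corollary~\ref{nu-s}. Since Corollary~\ref{nu-s} is itself deduced from Corollary~\ref{stable-Morita1} through the implication ``almost $\nu$-stable $\Rightarrow$ stable equivalence of Morita type'' (\cite[Theorem~1.1(2)]{hx10}), the two arguments are logically equivalent; yours is more constructive but longer, and still leaves the verification that $\bar T^\bullet$ is an almost $\nu$-stable tilting complex with the claimed endomorphism ring as a black-box reference to \cite{hx10}. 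Your reduction of the $\tau_A$-case to the $\Omega_A$-case via $\tau_A\cong\Omega_A^2\nu_A$ plus the Morita step through the Nakayama functor is fine, though again Liu--Xi handle both cases simultaneously, so the paper does not need this extra bookkeeping.
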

\begin{proof}
By \cite[Corollary 3.4]{liu2007}, we know that $\End_{A}(A\oplus X)$ and $\End_{A}(A\oplus \Omega^{n}(X))$ are stably equivalent of Morita type,
and $\End_{A}(A\oplus X)$ and $\End_{A}(A\oplus \tau_{A}^{n}(X))$ are stably equivalent of Morita type,
for each $n\in\mathbb{Z}$. Then the statement follows from Corollary \ref{stable-Morita1}.
\end{proof}

The following example taken from \cite[Example 3]{lx06}.
\begin{Bsp}{\rm
Let $A$ be a finite dimensional algebra over a field $k$ given by quiver with relations:
$$\begin{array}{ccc}
\xymatrix{
a \ar@<0.5ex>[r]^{\rho}
& k\ar@<0.5ex>[l]^{\rho'}
\ar@<0.5ex>[r]^{\delta}
&b\ar@<0.5ex>[l]^{\delta'}\\
&c\ar[u]^{\kappa}&
}\\
\rho'\rho=\delta\delta'=0,\,
\rho'\kappa=\delta\kappa=0,\,
\delta'\delta\rho\rho'=\rho\rho'\delta'\delta,
\end{array}$$
and $B$ be a $k$-algebra given by the quiver with relations:
$$\begin{array}{ccc}
\xymatrix{
1 \ar@<0.5ex>[rr]^{\alpha}
\ar@<0.5ex>[rd]^{\eta}&& 2\ar@<0.5ex>[ll]^{\delta}
\ar@<0.5ex>[ld]^{\beta}\\
&k\ar@<0.5ex>[lu]^{\gamma}
\ar@<0.5ex>[ru]^{\xi}&\\
&3\ar[u]^{\kappa}&
}\\
\delta\alpha=\gamma\beta,\xi\beta=\alpha\delta,\eta\gamma=\beta\xi,\,
\beta\alpha=\gamma\beta=\alpha\beta=\eta\delta=\xi\eta=\delta\xi=0,\,\gamma\kappa=\xi\kappa=0.
\end{array}$$
It follows from \cite[Example 3]{lx06} that $A$ and $B$ are stably equivalent of Morita type. By Corollary \ref{stable-Morita1}, we have
$\ed \Omega^{i}(A\modcat)=\ed \Omega^{i}(B\modcat)$ for each $i\in \mathbb{N}\cup\{\infty\}.$

}\end{Bsp}

\bigskip
\noindent{\bf Acknowledgements.}

\noindent{\bf Declaration of interests.} The authors have no conflicts of interest to disclose.

\noindent{\bf Data availability.} No new data were created or analyzed in this study.

\medskip

\end{document}